\newtheorem{theoremintro}{Theorem} 
\newtheorem{theorem}{Theorem}[section] 
\newtheorem{lemma}[theorem]{Lemma}
\newtheorem{proposition}[theorem]{Proposition}
\newtheorem{corollary}[theorem]{Corollary}
\theoremstyle{definition}
\newtheorem{definition}[theorem]{Definition}
\definecolor{bleu}{RGB}{45,75,135}
\def\1{{\mathbbm 1}}
\def\A{{\mathcal A}}
\def\amp{{\sf amp}}
\def\Aut{{\sf Aut}}
\def\b{{\sf b}}
\def\build#1_#2^#3{\mathrel{\mathop{\kern 0pt#1}\limits_{#2}^{#3}}}
\def\C{{\mathbb C}}
\def\cE{\mathcal{E}}
\def\d{{{\rm d}}}
\def\DL{{\mathsf{DL}}}
\def\DP{{\mathsf{DP}}}
\def\CL{{\mathsf{L}}}
\def\CLc{{\mathsf{L}^{\I}}}
\def\CLcp{{\mathsf{L}^{\I'}}}
\def\CP{{\mathsf{P}}}
\def\CPc{{\mathsf{P}^{\I}}}
\def\CPcp{{\mathsf{P}^{\I'}}}
\def\CPv{{\mathsf{P}^{\T}}}
\def\E{{\sf E}}
\def\EE{{\mathbf E}}
\def\Ec{{\E^{\circ}}}
\def\End{{\rm End}}
\def\epsilon{\varepsilon}
\def\Ex{{\mathbb E}}
\def\F{{\sf F}}
\def\G{{\sf G}}
\def\Gc{{\G^{\circ}}}
\def\H{{\mathcal H}}
\newcommand{\bH}{\mathbb{H}}
\def\HE{{\sf H}}
\def\Ht{{{\sf H}_{t}}}
\def\Hs{{{\sf H}_{s}}}
\def\hol{{\sf hol}}
\def\Hom{{\rm Hom}}
\def\I{{\mathbb{I}}}
\def\T{{\mathbb{T}}}
\def\Id{{\rm Id}}
\def\K{{\mathbb K}}
\def\L{{\mathcal L}}
\def\Leb{{\rm Leb}}
\def\mup{{\mu^{\begin{tikzpicture}  \filldraw (0,0) circle (0.02); \draw [draw=white] (0,-0.01) circle (0.05);\end{tikzpicture}}}}
\def\mul{{\mu^{\begin{tikzpicture} \draw (0,-0.01) circle (0.055); \end{tikzpicture}}}}
\def\mupl{\mu^{\begin{tikzpicture} \filldraw (0,-0.01) circle (0.02); \draw (0,-0.01) circle (0.055); \end{tikzpicture}}}
\def\muplc{\mu^{\begin{tikzpicture} \filldraw (0,-0.01) circle (0.02); \draw (0,-0.01) circle (0.055); \end{tikzpicture},\I}}
\def\muplp{\mu^{\begin{tikzpicture} \filldraw (0,-0.01) circle (0.02); \draw (0,-0.01) circle (0.055); \end{tikzpicture},\I'}}
\def\muplcp{\mu^{\begin{tikzpicture} \filldraw (0,-0.01) circle (0.02); \draw (0,-0.01) circle (0.055); \end{tikzpicture},\I,+}}
\def\muplcm{\mu^{\begin{tikzpicture} \filldraw (0,-0.01) circle (0.02); \draw (0,-0.01) circle (0.055); \end{tikzpicture},\I,-}}
\def\muplpp{\mu^{\begin{tikzpicture} \filldraw (0,-0.01) circle (0.02); \draw (0,-0.01) circle (0.055); \end{tikzpicture},\I',+}}
\def\muplpm{\mu^{\begin{tikzpicture} \filldraw (0,-0.01) circle (0.02); \draw (0,-0.01) circle (0.055); \end{tikzpicture},\I',-}}
\def\muplv{\mu^{\begin{tikzpicture} \filldraw (0,-0.01) circle (0.02); \draw (0,-0.01) circle (0.055); \end{tikzpicture},\T}}
\def\new#1{{\textcolor{black}{\em #1}}}
\def\ol{\overline}
\def\P{{\mathbb P}}
\def\PP{{\mathbf P}}
\def\phi{\varphi}
\def\Q{{\mathbb Q}}
\def\R{{\mathbb R}}
\def\S{{\sf S}}
\def\Tr{\mathrm{Tr}}
\def\tr{{\mathrm{tr}}}
\def\Un{{\mathrm{U}}}
\def\U{{\sf U}}
\def\ul{\underline}
\def\V{{\sf V}}
\def\W{{\sf W}}
\newcommand{\old}[1]{}
\title{Covariant Symanzik identities}
\author{Adrien Kassel}
\address{Adrien Kassel -- CNRS -- UMPA, ENS de Lyon}
\email{adrien.kassel@ens-lyon.fr}
\author{Thierry L\'evy}
\address{Thierry L\'evy -- LPSM, Sorbonne Universit\'e, Paris}
\email{thierry.levy@sorbonne-universite.fr}
\keywords{Discrete potential theory, Laplacian on vector bundles, Gaussian free vector field, random walks, covariant Feynman-Kac formula, Poissonnian ensembles of Markovian loops, local times, isomorphism theorems, discrete gauge theory, holonomy.}
\date{\today}
\begin{document}

\maketitle

\begin{abstract}
Classical isomorphism theorems due to Dynkin, Eisenbaum, Le Jan, and Sznitman establish equalities between the correlation functions or distributions of occupation times of random paths or ensembles of paths and Markovian fields, such as the discrete Gaussian free field. We extend these results to the case of real, complex, or quaternionic vector bundles of arbitrary rank over graphs endowed with a connection, by providing distributional identities between functionals of the Gaussian free vector field and holonomies of random paths. As an application, we give a formula for computing moments of a large class of random, in general non-Gaussian, fields\old{sections} in terms of holonomies of random paths with respect to an annealed random gauge field, in the spirit of Symanzik's foundational work on the subject.
\end{abstract}

\setcounter{tocdepth}{1}
{\footnotesize \tableofcontents}

\section*{Introduction}

Around 1980, following the pioneering work of Symanzik \cite{Symanzik} where he established, for the needs of Euclidean quantum field theory, a heuristic correspondence between random fields and random paths, Dynkin \cite{Dynkin-originel} initiated the mathematical study of the deep relations that exist between a Markovian field and a Markov process when they share the same generator. 

The joint study of random fields over a space and random paths in the same space may be seen as a probabilistic instance of the general principle that the geometry of a space can be understood by looking at good functions that live on it as well as by looking at its points.

The essence of the situation that gives rise to the correspondence investigated by Symanzik, Dynkin and many others after them, is the existence, on a topological or geometrical space, of a differential operator (say a Laplacian) which determines on one hand a random field on the space (the Gaussian free field) and on the other hand a random motion within the space (the Brownian motion). Both random objects express something of the geometry of the underlying space, and the general problem is to understand how the properties of the one mirror the properties of the other. 

A fairly general framework where a Laplacian operator is defined is that of a Riemannian manifold. A natural extension of this framework is that of a Euclidean or Hermitian vector bundle over a Riemannian manifold, endowed with a connection. In this case, the Laplacian acts on sections of the vector bundle, and the role of the random field will be played by a random \new{vector} field.

The goal of this paper is to investigate some of the relations between random vector fields and random paths in a discrete version of the setting that we just described, namely on a vector bundle with a connection \new{over a graph}. There, the discrete nature of space keeps the analysis simple, making for instance any procedure of renormalisation irrelevant, but the presence of the vector bundle lets interesting geometry enter the picture. In this framework, we will give appropriate formulations and complete self-contained proofs of the natural analogues of four classical theorems, sometimes called `isomorphism theorems', due to Dynkin, Eisenbaum, Le Jan, and Sznitman, and of a fifth result in the original spirit of Symanzik. We insist that these classical statements are not used in our proofs, and that the proofs that we propose also provide a geometrical insight into these classical statements. In their original form, these theorems relate occupation times of various random paths and functions of the square of the Gaussian free field. An interesting feature of the extensions of these results that we propose is the prominent role played not only by random paths on the graph, but also by the \new{parallel transport} determined along them by the connection.

Let us illustrate, without too much formal preparation, and on the example of an identity that is fundamental to our study, the role played by the parallel transport. Suppose $x$ and $y$ are vertices of our graph. Let $\P_{x}$ denote the distribution of the natural continuous time random walk issued from $x$. Let $t$ be a positive real. If $\gamma$ is a path in the graph, we denote by $\gamma_{t}$ its position at time $t$, and by $\hol(\gamma_{|[0,t)})$ the holonomy, or parallel transport, along the restriction to $[0,t)$ of $\gamma$. If $\gamma_{t}=y$, and provided $t$ is not a jump time of $\gamma$, this is a linear isometry from the fibre over $y$ to the fibre over $x$. Then, in a matricial expression of the operator $e^{-t\Delta}$ acting on the space of vector fields over the graph, that is, on the direct sum of all the fibres of the bundle, the block corresponding to a linear map from the fibre over $y$ to the fibre over $x$ can be expressed as
\begin{equation}
\big(e^{-t\Delta}\big)_{x,y} = \int \hol(\gamma_{|[0,t)})^{-1} \1_{\{\gamma_{t}=y\}}\; \d\P_{x}(\gamma).
\end{equation}
This equation is indeed the simplest case of a (covariant) Feynman--Kac formula that will be one of our main tools in proving our five main results.

\subsection*{Overview of the results}

In order to give a more precise idea of at least some of these results, it is necessary to be a little more specific about our framework. We consider a real, complex or quaternionic vector bundle of arbitrary rank over a graph endowed with a unitary connection. In this introduction, for the sake of simplicity, we will focus on the real case. The connection, in this case, consists in an orthogonal matrix over each oriented edge of the graph, called the holonomy along this edge. The holonomies along an edge and the same edge with the opposite orientation are inverse of each other (see Figure \ref{fig:setup} below). We denote the connection by $h$. The holonomy $\hol(\gamma)$ along a path $\gamma$ is the ordered product of the holonomies along the edges traversed by the path. It is an isometry from the fibre over the starting point of the path to the fibre over its finishing point.

\begin{figure}[h!]
\begin{center}
\includegraphics{connintro}
\end{center}
\caption{\small A discrete real vector bundle $\F$ of rank $r$ over a graph with an orthogonal connection $h$.}\label{fig:setup}
\end{figure}

The connection $h$ gives rise to a quadratic form  
\[\mathcal{E}(f)=\frac{1}{2}\sum_{e=xy} \| f_x -h_{e}^{-1} f_y\|^2\]
on the space of vector fields over the graph, the sum being extended to all oriented edges of the graph. To this quadratic form is associated a centred Gaussian random vector field $\Phi$, called the Gaussian free vector field (see Figure \ref{fig:imageGFVF}). Let us stress that except for very specific choices of the connection, there do not exist bases of the fibres in which the components of the field would be independent Gaussian free fields. 

The covariance of the Gaussian free vector field has an expression in terms of random walk. Indeed, if $x$ and $y$ are vertices, and if $\xi$ and $\eta$ are respectively vectors of the fibres over $x$ and $y$, then
\begin{equation}\label{eq:covariance}
\Ex[\langle\xi,\Phi_x\rangle \langle \Phi_y,\eta\rangle ]=\int \langle \xi,\hol(\gamma)^{-1} (\eta)\rangle\; \d\nu_{x,y}(\gamma), 
\end{equation}
where $\nu_{x,y}$ is a finite measure on the set of paths from $x$ to $y$.

By virtue of Wick's formula, one can turn \eqref{eq:covariance} into an expression for the moments of the field~$\Phi$, that is, for quantities of the form $\Ex\left[ \left(f_1,\Phi\right) \cdots \left(f_k,\Phi\right)\right]$, where $f_{1},\ldots,f_{k}$ are vector fields. This expression is an integral of products of matrix coefficients of holonomies along paths with respect to a multi-bridge measure.

All the results presented in this paper will follow more or less directly from identities slightly more general than, but similar to \eqref{eq:covariance}. For instance, a special case of our analogue of Dynkin's theorem (Theorem \ref{thm:Dynkin}) goes as follows. We denote by $\ell_{z}(\gamma)$ the time spent by a path $\gamma$ at a vertex $z$.

\begin{theoremintro}\label{thmintro:Dynkin} Let $\xi$ and $\eta$ be vectors of the fibres over two vertices $x$ and $y$. For all vertex $z$, let~$\beta_{z}$ be a real parameter. Then 
\[
\Ex\left[ \int e^{-\frac{1}{2}\sum_{z} \beta_{z}( \|\Phi_{z}\|^{2}+\ell_{z}(\gamma))}\langle \xi,\hol(\gamma)^{-1}(\eta)\rangle\;\d\nu_{x,y}(\gamma)\right]=\Ex\left[e^{-\frac{1}{2}\sum_{z} \beta_{z} \|\Phi_{z}\|^{2}} \langle\xi,\Phi_x\rangle \langle \Phi_y,\eta\rangle\right].
\]
\end{theoremintro}

By specialising this result to the case of the trivial bundle of rank $1$, one recovers Dynkin's original statement. On the other hand, our full result is more general than Theorem \ref{thmintro:Dynkin} in that the parameter $\beta_{z}$, which can be understood as a potential at the vertex $z$, can be replaced by a non-negative self-adjoint operator on the fibre over $z$. The definition of the holonomy must be twisted to fit this generalisation, see Definition \ref{def:twisted-holo}.

\begin{figure}[h!]
\begin{center}
\includegraphics[width=15cm]{GFVF2LR}
\caption{\small \label{fig:imageGFVF} This picture shows four samples of the rank $2$ Gaussian free vector field $\Phi$ on the square grid $\{1,\ldots,25\}^{2}$, pinned at the top right corner. The parallel transport along each horizontal edge from left to right is the rotation of angle $\pi/7$, and the parallel transport along each vertical edge from top to bottom is the rotation of angle $\pi/5$.  For each sample, the top picture shows the first component of $\Phi$, the middle picture its second component, and the bottom picture shows the vector field itself.
The picture illustrates the fact that the two components of the field are correlated. Indeed, in certain samples such as the fourth, the field is visibly structured by the geometry of the connection, whereas in other samples, this structure is less apparent. The point is that the two coordinates of the field seem to consistently exhibit a comparable level of structuration. 
}
\end{center}
\end{figure}

Let us state a special case of another of our results, namely Le Jan's isomorphism (Theorem~\ref{thm:LeJanSznitman}). This result involves a $\sigma$-finite measure $\mu$ on the set of loops in the graph, a precise definition of which is given in Definition \ref{defmu}. From a real bundle $\F$, we form the bundle $\End(\F)$, whose fibre over a vertex $x$ is the space of endomorphisms of $\F_{x}$. The connection $h$ induces by conjugation a connection $h^{*}\otimes h$ on $\End(\F)\simeq \F^{*}\otimes \F$ and for all path $\gamma$ joining $x$ to $y$, the holonomy of $h^{*}\otimes h$ along $\gamma$ sends $u\in \End(\F_{x})$ to $\hol_{h}(\gamma)\circ u \circ \hol_{h}(\gamma)^{-1}\in \End(\F_{y})$. We denote the associated Gaussian free field of endomorphisms by $x\mapsto \Psi_{x}$.

\begin{theoremintro} \label{thmintro:LeJan} The field  $x\mapsto \frac{1}{2}\Tr(\Psi_{x}^{*} \Psi_{x})$
has the same distribution as the occupation field of a Poisson point process of loops with intensity $\frac{1}{2} \Tr(\hol_h(\cdot))^2 \mu$.
\end{theoremintro}

What makes this setting particularly pleasant is a positivity property, namely the positivity of the traces of the holonomies along loops in the bundle of endomorphism. A similar statement holds when $\F$ is a complex bundle, with the intensity of the Poisson point process multiplied by~$2$. In this complex case, the bundle of endomorphisms splits into the two real sub-bundles of Hermitian and skew-Hermitian operators. The connection preserves each of these bundles and retains on each of them the positivity property of the traces of holonomies (see Section \ref{sec:LJpositif}). The Gaussian field of Hermitian operators was considered and studied by Lupu in his work \cite{Lupu-Dynkin} following a first pre-published version of ours.

In our most general statement (see Theorem \ref{thm:LeJanSznitman}), this positivity property does not hold, and we must find a way of dealing with signed measures, ultimately introducing two Poisson point processes corresponding to the positive and negative parts of the measure.

Let us mention that after this work was first prepublished, \cite{Lawler-Panov} provided a combinatorial proof of the complex case of rank $1$ of Le Jan's theorem, in a formulation that does not involve Poisson point processes and thus bypasses the splitting mentioned above.

\subsection*{Background and related works} Let us now give some pointers to the literature and discuss some motivations of the present work.

\subsubsection*{Isomorphism theorems}

The classical subject of isomorphism theorems originated in an approach of Symanzik to constructive Euclidean quantum field theory~\cite{Symanzik}, also advocated by Nelson~\cite{Nelson}, which was initially implemented by Brydges, Fr\"{o}hlich, Spencer, and Sokal~\cite{BFS1,BFS2} in the context of classical spin systems, and later developed by Dynkin who expressed these physics ideas in the formal language of Markov processes~\cite{Dynkin-Markov, Dynkin,Dynkin-localtime}.

Further developments followed, with the isomorphisms of Eisenbaum~\cite{Eisenbaum}, Le Jan~\cite{LeJan-1}, and Sznitman~\cite{Sznitman}, among others. See~\cite{ Marcus-Rosen,LeJan-book,Sznitman-book} and the references therein for expository accounts of these results and their relation to the earlier Ray--Knight theorems. The introduction of the Brownian loop soup by Lawler and Werner~\cite{Lawler-Werner} was followed by~\cite{LeJan-2}, and an isomorphism for Brownian interlacement was given by Sznitman~\cite{Sznitman-continuum}. The whole subject has seen further recent developments, attested among many other papers by \cite{Lupu,Werner,Qian-Werner,LupuWerner,Zhai,Ding,SabotTarres,SznitmanRefinedIsomorphism,Abacherli-Sznitman}.

Very recently, in \cite{Helmuth}, the authors considered other geometries for the target space of the Gaussian field than the Euclidean one, namely spherical and hyperbolic geometries. They provide a unified exposition of three Dynkin isomorphism theorems relating fields to not necessarily Markovian random paths, namely the simple, vertex-reinforced, and vertex-diminished random walks. 

\subsubsection*{Lattice gauge theories}
The geometric framework of connections on graphs is the classical framework of lattice gauge theory, which gives a finite approximation scheme to continuous gauge theories, such as the Yang--Mills theory; see \cite{Levy-survey} for a recent survey of progress on Yang--Mills theory in dimension~$2$. One long-term general motivation for our study is the construction of a tractable probabilistic model for the interaction of a gauge field with a fermionic field, that is, for a random connection coupled to a model of statistical physics. The Gaussian free vector fields and the loop soups appearing in this paper would rather qualify as bosonic, but there are hints that they are related to the fermionic side. For instance, one way to describe Le Jan's loop measure is by means of Wilson's algorithm for sampling a uniform spanning tree by erasing loops from random walk trajectories (\cite{LeJan-book, Lawler-survey}). The distribution of the ensemble of erased loops is related to Le Jan's measure, and the uniform spanning tree, of determinantal nature, could qualify as fermionic matter.

In a work in preparation \cite{KL4} we introduce an analogue of uniform spanning tree measures in the context of vector bundles over graphs and discuss links to electrical networks and random walk holonomies. A preview of the definition of these quantum spanning forests is available in \cite[Section 1.5]{KL3}. Spanning forests on vector bundles of real, complex, or quaternionic rank~$1$ were investigated by Kenyon \cite{Kenyon} (see also \cite{Kassel-Kenyon}), and the slightly different but related case of loop soups on graph coverings was considered in a recent work by Le Jan~\cite{LeJan16}. 

In the work of physicists, it is customary that computations involving matrix models, connections, and gauge symmetry, can be expressed in terms of random surfaces or topological expansions. In the present paper, we are dealing with random paths rather than surfaces. Nevertheless, in the framework considered by Lupu in \cite{Lupu-Dynkin} and described before Theorem \ref{thmintro:LeJan}, topological expansions over maps do appear.  Another way in which such higher dimensional expansions arise is by considering Gaussian random forms on the cells of a simplicial complex and relating them to simplicial random walks \cite{KR}. A combinatorial relationship between vector bundles over graphs and simplicial complexes of higher dimension was also suggested in \cite[Section 7.1]{KL2}.

\subsection*{Organisation of the paper}\label{sec:intro-organisation}

Sections~\ref{sec:graphs} and \ref{sec:bundles} are devoted to a detailed description of the basic setup of vector bundles over graphs which we use throughout. The main novelty consists in the introduction, in Section~\ref{sec:bundles}, of a notion of twisted holonomy (Definition~\ref{def:twisted-holo}) which, we believe,
is a fruitful extension in our geometric framework of classical exponential functionals of local times. 

One of the most useful result for our study is a Feynman--Kac formula (Theorem~\ref{prop:feynman-kac}) which we prove in Section~\ref{sec:covFK}. This formula can be thought of as a discrete analogue of a continuum version which can be traced back at least to the work of Norris~\cite{Norris} (see also~\cite{Albeverio}) in stochastic differential geometry, and which can also be found, in a different guise for discrete time walks, in earlier works of Brydges, Fr\"{o}hlich, and Seiler~\cite{BFS} on lattice gauge theories as we discovered after completing our work. Batu G\"{u}neysu also kindly informed us of related work in \cite{Guneysu}.

The main results of the paper are presented in Sections \ref{sec:Dynkin}, \ref{sec:LJS}, and \ref{sec:Symanzik}. Section~\ref{sec:Dynkin} presents a very general formulation of both Dynkin's (Theorem~\ref{thm:Dynkin}) and Eisenbaum's (Theorem~\ref{thm:Eisenbaum}) isomorphism theorems. Our formulation is targeted at giving more insight into the relation between these isomorphism theorems and the probabilistic formulation of potential theory, see for instance \eqref{diffinvlap}. We believe that a part of the meaning of these isomorphisms appears more clearly in our general framework than in the classical case.
In any case, the relations that we prove between twisted holonomies and fields extend
the known relations between random paths and Gaussian fields, since the class of functionals obtained from twisted holonomies captures more faithfully, thanks to the non-commutativity of the gauge groups, the actual geometry of paths, whereas local times ignore much of the chronological unfolding of events in a trajectory.

Section~\ref{sec:LJS} presents a unified formulation (Theorem~\ref{thm:LeJanSznitman}) of the isomorphisms of Le Jan and Sznitman in the context of vector bundles. The most general statement is rather involved, with notions such as splittings, colours decompositions, signed measures, and amplitudes, all introduced gradually. We also present a special case of the statement (Theorem \ref{thm:LeJanSznitmanpositif}) which is much more concise and holds for certain type of connections which we call trace-positive.

Finally, Section~\ref{sec:Symanzik} builds on the previous sections to prove a statement (Theorem~\ref{thm:Symanzik}) relating moments of a large class of random sections to holonomies of random paths under an annealed gauge field. We call these formulas covariant Symanzik identities.

\subsection*{Acknowledgements} 
We thank Alain-Sol Sznitman for stimulating discussions on the subject.
We thank, for their hospitality and support, the Forschungsinstitut f\"{u}r Mathematik (FIM) in Zurich, the Centre Interfacultaire Bernoulli (CIB) in Lausanne, and the ETH Zurich where parts of this work were completed.

\section{Graphs, paths, and loops}\label{sec:graphs} \label{sec:paths}

\subsection{Graphs}  \label{sec:defgraph}
In this first section, we give a precise definition of the graphs that we will consider throughout this paper. One aspect in which this definition differs from that used for example in \cite{Sznitman-book} or \cite{LeJan-book} is that we allow more than one edge between two vertices of a graph. We also allow loop edges, that is, edges with identical end points.  
\begin{definition}\label{defgraph}
A \new{graph} is a quintuple $\G=(\V,\E,s,t,i)$ consisting of 
\begin{itemize}
\item two non-empty finite sets $\V$ and $\E$,
\item two maps $s,t:\E\to \V$, and
\item a fixed-point free involution $i$ of $\E$ such that $t\circ i=s$ holds.
\end{itemize}
\end{definition}

The elements of $\V$ are the \new{vertices} of the graph, the elements of $\E$ its \new{edges}. Each edge $e$ is oriented and joins its source $s(e)$ to its target $t(e)$. 

The involution $i$ induces an equivalence relation on $\E$ and the equivalence classes of this relations, that is, the pairs $\{e,i(e)\}$, are called the \new{geometric edges} of the graph. The set of geometric edges of the graph is denoted by $[\E]$.

We will often write $\ul{e}=s(e)$, $\ol{e}=t(e)$ and $e^{-1}=i(e)$ for the source, target, and inverse of an edge. We will also write a graph simply as a pair $(\V,\E)$ and not mention the maps $s,t,i$.

\begin{figure}[h!]
\begin{center}
\includegraphics[height=3.5cm]{graph}
\caption{\label{exgraph}\small The graph depicted on the left has 6 vertices and 12 geometric edges. The picture on the right shows the 24 edges of this graph.}
\end{center}
\end{figure}

We say that the graph $\G=(\V,\E)$ is \new{connected} if for all $x,y\in \V$, there exists a well-chained alternating sequence $(x_{0},e_{1},x_{1},\ldots,e_{n},x_{n})$ of vertices and edges such that $x_{0}=x$, $x_{n}=y$ and for all $k\in \{1,\ldots,n\}$, the edge $e_{k}$ starts at $x_{k-1}$ and finishes at $x_{k}$. In this paper, we will only work with connected graphs. 

\begin{definition} In a graph $\G=(\V,\E)$, we call \new{well} a proper subset $\W$ of the set $\V$ of vertices. The elements of $\V\setminus \W$ are called the \new{proper} vertices of the graph. 
The subset
\[\partial(\V \setminus \W)=\{x\in \V \setminus \W : \exists e\in \E, s(e)=x \mbox{ and } t(e)\in \W\}\]
of $\V$ is called the \new{rim} of the well. 
\end{definition}

A typical situation giving rise to graphs with wells is that where an infinite graph is exhausted by an increasing sequence $\V_{1}\subset \V_{2}\subset \ldots$ of finite subgraphs. Then, for each $n\geq 1$, one can consider the graph with vertex set $V_{n+1}$, with the edges inherited from the infinite ambient graph, and with well $V_{n+1}\setminus \V_{n}$.

\subsection{Weights and  measures} \label{wandm}
We will gradually introduce a certain amount of structure on the graphs that we consider. We start with a weighting of the edges.
\begin{definition}
Let $\G=(\V,\E)$ be a graph. A \new{conductance} on $\G$ is a positive real-valued function $\chi:\E\to (0,+\infty)$ such that $\chi\circ i=\chi$.
\end{definition} 

We think of $\chi$ as a measure on the set of edges of the graph. We shall usually write $\chi_{e}$ instead of $\chi(e)$ for the conductance of an edge $e$. The pair $(\G,\chi)$ is called a \new{weighted graph}.

Let $\G=(\V,\E)$ be a graph with a well endowed with a conductance $\chi$. We define the \new{reference measure} on $\G$ as the function $\lambda:\V\to (0,+\infty)$ defined by
\begin{equation}\label{refmeas}
\forall x\in \V, \ \lambda_{x}=\sum_{e\in \E : \ul{e}=x}\chi_{e}.
\end{equation}

Let us emphasise that $\lambda$ takes a positive value at every vertex, even on the vertices of the well.

The part of $\lambda$ that is due to the conductances of edges joining a proper vertex to a vertex of the well plays a special role. We define, for every proper vertex $x$,
\[\kappa_{x}=\sum_{e\in \E : \ul{e}=x, \ol{e}\in \W} \chi_{e}.\]
By definition, the support of the measure $\kappa$, that is, the set of vertices $x$ such that $\kappa_{x}>0$, is the rim of the well. 

In this paper and unless explicitly stated otherwise, by a \new{weighted graph with a well}, we will always mean the data of the collection of objects $(\W\subset\V,\E,s,t,i,\chi,\lambda,\kappa)$ as defined in this section, and with this notation. We will moreover always assume that the graph is connected, in the sense explained above.
\medskip

Let us conclude this section by a few terminological remarks. Firstly, the reader may find that there is an absence of distinction between measures and functions in our presentation, as we took advantage of the discrete nature of our framework to identify the measures $\chi,\lambda,\kappa$ with their densities with respect to the counting measures on $\E$ and $\V$. We adopted these identifications because they allow for a simpler and lighter notation, but we also believe that they hide to some extent the true nature of the objects which one manipulates (and which would appear more clearly in a continuous set-up). Therefore, we invite the reader to check periodically that our statements are consistent in this respect and that our formulas are, in the physical sense, homogeneous. 

Secondly, the objects that we are considering are very classical and receive several different names in the literature. The main reason for introducing them with care is that when dealing with twisted holonomies, one has to be careful with the precise definition of paths (when there are multiple edges, and when there are edges leading to the well, which we also consider backwards).
The vertex or set of vertices that we call the well is sometimes called the \new{sink}, 
and sometimes also the \new{cemetery}, and the authors who use the latter name call the quantity that we denote by $\kappa$ the \new{killing rate}. We prefer to avoid a morbid terminology, but respect the tradition and keep the notation $\kappa$.\footnote{We do find wells and their rims more inspiring than sinks and cemeteries, following Neruda \cite{Neruda}: \emph{Hay que sentarse a la orilla/del pozo de la sombra/y pescar luz ca\'ida/con paciencia}; {[\emph{We must sit on the rim/of the well of darkness/and fish for fallen light/with patience}]}.} An electric analogy is also often used in this context, accounting for the name of the conductance. In this terminology, the well should be though of as an electric \new{ground}. This may be the place to mention that, depending on the analogy which one choses, the conductance of an edge can be understood as the inverse of an electric resistance, the inverse of a length, or the section of a water pipe. 

\subsection{Discrete paths}\label{sec:discrete paths} 
Paths in graphs will play a crucial role in this study. The reader will soon notice that it is not only important for us to know which vertices a path visits, but also which edges it traverses, and because this is not the most widespread point of view, we give complete definitions of the objects that we consider. We will mainly study paths indexed by continuous time, but it is convenient to define paths indexed by discrete time first.

Let us fix a graph $\G=(\V,\E)$.

\begin{definition}
Let $n\geq 0$ be an integer. The set of \new{discrete paths} of length $n$ in $\G$ is the set
\begin{equation}\label{defDPk}
\DP_{n}(\G)=\big\{(x_{0},e_{1},x_{1},\ldots,e_{n},x_{n}) \in \V \times (\E\times \V)^{n} : \forall k\in \{1,\ldots,n\}, \ul{e_{k}}=x_{k-1} \mbox{ and } \ol{e_{k}}=x_{k}\big\}.
\end{equation}
The set of discrete paths on $\G$ is the disjoint union
\begin{equation}\label{defDP}
\DP(\G)=\bigcup_{n\geq 0} \DP_{n}(\G).
\end{equation}
\end{definition}

For example, a discrete path of length $0$ is simply a vertex, and a discrete path of length $2$ is a well-chained sequence $(x_{0},e_{1},x_{1},e_{2},x_{2})$ of vertices and edges. The \new{length} of a path, sometimes called its \new{combinatorial length}, is the number of edges traversed by this path. In this work, as the previous definition indicates, we will only consider \emph{finite} discrete paths.

A discrete path of positive length $k$ is of course completely characterised by the sequence of~$k$ edges which it traverses. Nevertheless, despite the fact that edges will play for us a more important role than is usual (because the parallel transport along parallel edges will not be assumed to be equal), we will still pay a lot of attention to the sequence of vertices visited by paths, and it is useful to keep explicit track of this information in their definition.

The initial and final vertex of a discrete path $p=(x_{0},e_{1},\ldots,x_{n})$ are respectively  denoted by $\ul{p}=x_{0}$ and $\ol{p}=x_{n}$. 

\begin{definition} A \new{discrete loop} is a discrete path with identical initial and final vertices. The set of discrete loops of length $n$ is denoted by $\DL_{n}(\G)$ and the set of all discrete loops by
\[\DL(\G)=\bigcup_{n\geq 0} \DL_{n}(\G).\]
\end{definition}

Note that $\DL_{0}(\G)=\DP_{0}(\G)= \V$.

\subsection{Continuous paths}\label{sec:CP}  A path indexed by continuous time is a discrete path which spends, at each vertex that it visits, a certain positive amount of time. Here is the formal definition.

\begin{definition}\label{def CP} Let  $\G=(\V,\E)$ be a graph. A {\em continuous path} in $\G$ is an element of the set
\[\CP(\G)=\bigcup_{n\geq 0} \left(\DP_{n}(\G)\times (0,+\infty)^{n}\times (0,+\infty]\right).\]
A {\em continuous loop} in $\G$ is an element of the set
\[\CL(\G)=\bigcup_{n\geq 0} \left(\DL_{n}(\G)\times (0,+\infty)^{n}\times (0,+\infty]\right).\]
\end{definition}

If $p=(x_{0},e_{1},\ldots,x_{n})$ is a discrete path, $\tau_{0},\ldots,\tau_{n-1}$ are positive reals and $\tau_{n}$ is an element of $(0,+\infty]$, then we will denote the continuous path $(p,\tau_{0},\ldots,\tau_{n})$ by
\[\gamma=((x_{0},\tau_{0}),e_{1},(x_{1},\tau_{1}),\ldots,e_{n},(x_{n},\tau_{n})).\]
This path is understood as the trajectory of a particle which starts from $x_{0}$, spends time $\tau_{0}$ at~$x_{0}$, jumps to $x_{1}$ through the edge $e_{1}$, spends time $\tau_{1}$ at $x_{1}$, and so on. The path $p$ is said to be the discrete path underlying the continuous path $\gamma$. Note that a continuous loop spends at least two intervals of time at its starting point, one at the beginning and the other at the end of its course.

The times $\tau_{0},\ldots,\tau_{n}$ are called the \new{holding times} of $\gamma$. The \new{total lifetime} of $\gamma$ is 
\[|\gamma|=\tau_{0}+\ldots+\tau_{n}.\]
The \new{proper lifetime} of $\gamma$ is the time at which $\gamma$ first hits the well:
\begin{equation}\label{eq:deftaugamma}
\tau(\gamma)=\tau_{0}+\ldots+\tau_{m-1}, \mbox{ where } m=\inf\{l\geq 0 : x_{l}\in \W\}.
\end{equation}
It is understood that if the starting point of $\gamma$ belongs to the well, then $\tau(\gamma)=0$, and that if $\gamma$ never hits the well, then $\tau(\gamma)=|\gamma|$.

To the path $\gamma$, we associate a right-continuous function from $[0,|\gamma|)$ to $\V$, which we also denote by $\gamma$. It is defined as follows: for every $t \in [0,|\gamma|)$, there is a unique $k\in \{0,\ldots,n\}$ such that $\tau_{0}+\ldots + \tau_{k-1} \leq t < \tau_{0}+\ldots +\tau_{k}$, and we set $\gamma_{t}=x_{k}$. If $t\in [0,\tau_{0})$, then $k=0$ and $\gamma_{t}=x_{0}$. Informally,
\[\gamma_{t}=\sum_{k=0}^{n} x_{k} \1_{[\tau_{0}+\ldots+\tau_{k-1},\tau_{0}+\ldots+\tau_{k})}(t).\]
For every $t \in (0,|\gamma|)$, we define the restriction of the continuous path $\gamma$ to $[0,t)$ by setting
\[\gamma_{|[0,t)}=((x_{0},\tau_{0}),e_{1},\ldots,e_{k-1},(x_{k-1},\tau_{k-1}),e_{k},(x_{k},t-(\tau_{0}+\ldots+\tau_{k-1}))),\]
where $k$ is the same integer as before. Note that the right-continuous function from $[0,t)$ to $\V$ associated to the continuous path $\gamma_{|[0,t)}$ is equal to the restriction to $[0,t)$ of the right-continuous function from $ [0,|\gamma|)$ to $\V$ associated to $\gamma$.

Finally, if the total lifetime of $\gamma$ is finite, we define the \new{time-reversal}, or \new{inverse} of $\gamma$ as the path
\[\gamma^{-1}=((x_{n},\tau_{n}),e_{n}^{-1},(x_{n-1},\tau_{n-1}),\ldots,e_{1}^{-1},(x_{0},\tau_{0})).\]
The total lifetime of $\gamma^{-1}$ is the same as that of $\gamma$, and the right-continuous function from $[0,|\gamma|)$ to $\V$ associated to $\gamma^{-1}$ is defined by
\[\gamma^{-1}_{t}=\lim_{s\uparrow |\gamma|-t}\gamma_{s}.\]

\subsection{Discrete time random walk}\label{sec:RW} We will now describe the natural random walk on a weighted graph with a well. It is a random continuous path on the graph, of which we will start by describing the underlying random discrete path.

There is nothing complicated about this random discrete path : from any vertex, it selects randomly one of the outgoing edges, using the conductances as probability weights, then goes to the other end of the selected edge, and from there repeats this procedure, until it hits the well. The reason why we give some detail about this random walk is that, because our graph can have several edges between two given vertices, and because we want to keep track of which edge the random walk chose (because the parallel transport of the connection along it will be specific to that edge), we are slightly outside the classical framework of Markov chains on the vertex set of a graph. The random walk that we want to consider can however easily be derived, as we will now explain, from a natural Markov chain on the set of \new{half-edges} of the graph.

Let us consider a graph $\G=(\V,\E)$. We introduce the sets
\begin{equation}\label{fibprods}
\Hs=\{(x,e) \in \V\times \E: x=s(e)\}  \mbox{ and } \Ht=\{(e,y)\in \E\times \V : t(e)=y\}
\end{equation}
of initial and final half-edges, and the set
\[\HE=\Hs\cup \Ht\]
of all half-edges. 

The conductances on the edges of our graph allow us to construct a Markovian transition matrix on the set $\HE$: it is the stochastic matrix $Q\in M_{\HE,\HE}(\R)$ such that 
\begin{itemize}
\item for all edge $e$, with $x=\ul{e}$ and $y=\ol{e}$, we have $Q_{(x,e),(e,y)}=1$,
\item for all vertex $x$ and all edges $e$ and $f$ such that $\ol{e}=x$ and $\ul{f}=x$, $Q_{(e,x),(x,f)}=\frac{\chi_{f}}{\lambda_{x}}.$
\end{itemize}
The only allowed transition from an initial half-edge to a final half-edge is to the final half of the same edge, and the only possible transitions from a final half-edge with target $x$ are to the initial half-edges starting from $x$. 

For every vertex $x$, let us denote by $\eta_{x}$ the following probability measure on the set of initial half-edges issued from $x$:
\[\eta_{x}=\sum_{e\in \E : \ul{e}=x} \frac{\chi_{e}}{\lambda_{x}} \delta_{(x,e)}.\]

For all vertex $x\in \V\setminus \W$, the hitting time of the set of final half-edges of the form $(e,w)$ with $w\in \W$ is almost surely finite for the Markov chain on $\HE$ with transition matrix $Q$ and initial distribution $\eta_{x}$. A path of this Markov chain stopped at this hitting time is thus of the form
\[((x_{0},e_{1}),(e_{1},x_{1}),\ldots,(x_{n-1},e_{n}),(e_{n},x_{n}))\]
with $x_{0}=x$, $x_{0},\ldots,x_{n-1}\in \V\setminus \W$, $x_{n}\in \W$.
The sequence $(x_{0},e_{1},x_{1},\ldots,e_{n},x_{n})$ is then a random element of $\DP(\G)$, of which we denote $\Q_{x}$ the distribution.

For $x\in \W$, we define $\Q_{x}$ as the Dirac mass at the constant path at $x$, of length $0$.

The measure $\Q_{x}$ can be described and characterised as follows.

\begin{proposition} Let $\G$ be a weighted graph with a well. For every vertex $x\in \V$, $\Q_{x}$  is the unique probability measure on the countable set $\DP(\G)$ such that for every discrete path $(x_{0},e_{1},\ldots,e_{n},x_{n})$, one has
\begin{equation}\label{defQx}
\Q_{x}(\{(x_{0},e_{1},\ldots,e_{n},x_{n})\})=\1_{\{x\}}(x_{0})\1_{\W}(x_{n}) \prod_{k=1}^{n} \left[\1_{\V\setminus\W}(x_{k-1})\frac{\chi_{e_{k}}}{\lambda_{x_{k-1}}}\right].
\end{equation}
\end{proposition}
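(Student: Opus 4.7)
The set $\DP(\G)$ is a countable disjoint union of the finite sets $\DP_n(\G)$, so a probability measure on it is entirely determined by its values on singletons, and uniqueness is automatic once existence is established. The right-hand side of~\eqref{defQx} is manifestly non-negative, so the only non-trivial task is to check that when summed over all singletons it yields total mass equal to one.

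The first thing to record is the normalisation of the transition matrix: from~\eqref{refmeas} we have, for every $x\in\V$,
\[\sum_{e\in\E : \ul{e}=x}P_{x,e}=\frac{1}{\lambda_x}\sum_{e\in\E : \ul{e}=x}\chi_e=1,\]
whereas for $x\in\W$ every entry $P_{x,e}$ vanishes since no edge is issued from a vertex of the well. Hence $P$ encodes a genuine Markov kernel on $\U$ that is absorbed on $\W$. I would then reinterpret \eqref{defQx} probabilistically: on an auxiliary probability space build inductively a random sequence $(X_0,e_1,X_1,e_2,\ldots)$ by setting $X_0=x$, stopping as soon as $X_k\in\W$, and otherwise selecting $e_{k+1}$ with probability $P_{X_k,e_{k+1}}$ and declaring $X_{k+1}=t(e_{k+1})$. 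Letting $T=\inf\{k\geq 0 : X_k\in\W\}$, a direct computation shows that, on the event $\{T<\infty\}$, the random element $(X_0,e_1,\ldots,e_T,X_T)$ of $\DP(\G)$ has exactly the distribution prescribed by~\eqref{defQx}. The total mass of the candidate $\Q_x$ therefore equals $\P_x(T<\infty)$.

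To show $\P_x(T<\infty)=1$, I would separate two cases. When $x\in\W$ the chain is absorbed at time $0$ and the singleton $\{(x)\}$ carries mass~$1$, so the total mass is $1$. When $x\in\V$, the connectedness of the proper subgraph $\G_{|\V}$ together with the definition of the rim $\partial\V$ guarantees that for every $y\in\V$ there exists an edge-path in $\G$ from $y$ to some vertex of $\W$; each such path has strictly positive probability under the Markov kernel~$P$. The finiteness of $\V$ then furnishes constants $N\in\N$ and $\epsilon>0$ such that $\P_y(T\leq N)\geq\epsilon$ for every $y\in\V$, and the strong Markov property gives $\P_x(T>kN)\leq(1-\epsilon)^k$, whence $\P_x(T<\infty)=1$.

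I do not expect any serious obstacle here: the whole argument is a standard absorption computation. The only care needed is to invoke, at the right moment, the three structural assumptions on a graph with a well — connectedness of $\G_{|\V}$, non-emptiness of $\W$, and the fact that every vertex of $\W$ is the target of at least one edge issued from $\V$ — in order to produce the uniform lower bound on absorption probabilities. Once this is in hand the proposition follows immediately by Kolmogorov extension on the countable set $\DP(\G)$.
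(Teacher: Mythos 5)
Your argument is correct, but it takes a different route from the paper. The paper works directly with the vertex transition matrix $Q$ (with $Q_{x,y}=\sum P_{x,e}$ over edges from $x$ to $y$): it observes that connectedness forces some power of $Q$ to have all proper rows positive, deduces that the spectral radius of $Q$ is strictly less than $1$, and then computes the total mass $\sum_{n\geq 0}(Q^nW)_x$ exactly via the telescoping identity $(I-Q)V=QW$, obtaining $\sum_{n\geq 0}Q^nW=V+W$. You instead interpret the candidate measure as the law of the stopped Markov chain and prove almost sure absorption by the classical uniform bound $\P_y(T\leq N)\geq\epsilon$ for all $y\in\V$ followed by the geometric tail estimate $\P_x(T>kN)\leq(1-\epsilon)^k$. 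Both proofs hinge on the same structural facts (substochasticity of $P$ off the well, connectedness of $\G_{|\V}$, non-emptiness of $\W$ and of the rim), and your identification of the total mass with $\P_x(T<\infty)$ is exactly the quantity $\sum_{n\geq 0}(Q^nW)_x$ the paper evaluates; the paper's version buys a clean closed-form linear-algebra computation, while yours is more elementary probabilistically and avoids the spectral-radius step. Two cosmetic remarks: only the ordinary Markov property at the deterministic times $kN$ is needed, not the strong Markov property, and the appeal to Kolmogorov extension is superfluous since a measure on the countable set $\DP(\G)$ is determined by, and can be defined through, its values on singletons once the total mass is shown to be $1$.
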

Note that the probability $\Q_{x}$ is supported by the set of paths joining $x$ to the well. In particular, and although we do not make this explicit in the notation, the measure $\Q_{x}$ depends on the well $\W$.

\begin{proof} The expression \eqref{defQx} of $\Q_{x}$ follows immediately from its description given before the statement of the proposition. The uniqueness is obvious, since \eqref{defQx} describes the mass assigned by $\Q_{x}$ to every singleton of the countable set $\DP(\G)$.
\end{proof}

It will be convenient in the following to introduce the notation 
\begin{equation}\label{eq:defPxe}
P_{x,e}=\frac{\chi_{e}}{\lambda_{x}}
\end{equation}
defined for all $(x,e)\in \Hs$, that is, for all vertex $x$ and all edge $e$ starting from $x$. 

\subsection{Continuous time random walk}
The definition \ref{def CP} of the set $\CP(\G)$ of continuous paths on~$\G$ as
a countable union of Cartesian products of a finite set and finitely many intervals suggests the definition of a $\sigma$-field on $\CP(\G)$ which we adopt but do not deem necessary to write down explicitly.

\begin{definition}  Let $\G$ be a weighted graph with a well. For every $x\in \V$, we denote by $\P_{x}$ the unique probability measure on $\CP(\G)$ such that, for every bounded measurable function $F$ on~$\CP(\G)$,
\begin{align}
\nonumber
\int_{\CP(\G)} F(\gamma) \; \d\P_{x}(\gamma)&=\sum_{n\geq 0} \sum_{\substack{p\in \DP_{n}(\G)\\ p=(x_{0},e_{1},\ldots,x_{n})}} \Q_{x}(\{p\}) \\
&\hspace{-1.5cm} \int_{(0,+\infty)^{n}} F\big(((x_{0},\tau_{0}),e_{1},\ldots,(x_{n-1},\tau_{n-1}),e_{n},(x_{n},+\infty))\big) e^{-\tau_{0}-\ldots-\tau_{n-1}}\; \d\tau_{0}\ldots \d\tau_{n-1}. \label{defPx}
\end{align}
\end{definition}

In words, a sample of $\P_{x}$ can be obtained by first sampling $\Q_{x}$, in order to obtain a discrete path $(x_{0},e_{1},\ldots,e_{n},x_{n})$, and then sampling $n$ independent exponential random variables $\tau_{0},\ldots,\tau_{n-1}$ of parameter $1$. The continuous path 
\[((x_{0},\tau_{0}),e_{1},\ldots,e_{n-1},(x_{n-1},\tau_{n-1}),e_{n},(x_{n},+\infty))\]
has the distribution $\P_{x}$.

Note that $\P_{x}$ is supported by continuous paths with infinite lifetime. Moreover, for all $t>0$, the inverse of $\gamma_{|[0,t)}$ is well defined (see the end of Section \ref{sec:CP}).

The following lemma expresses the reversibility, or more precisely the $\lambda$-reversibility of the continuous time random walk on $\G$.

\begin{lemma}\label{reversibility} Let $\G$ be a weighted graph with a well. Let $x,y\in \V\setminus\W$ be proper vertices of $\G$. Let $t>0$ be a positive real. Let $F$ be a bounded measurable function on $\CP(\G)$. Then
\[\lambda_{x}\int_{\CP(\G)} F(\gamma_{[0,t)}^{-1}) \1_{\{\gamma_{t}=y\}} \; \d\P_{x}(\gamma)=\lambda_{y}\int_{\CP(\G)} F(\gamma_{[0,t)}) \1_{\{\gamma_{t}=x\}} \; \d\P_{y}(\gamma).\]
\end{lemma}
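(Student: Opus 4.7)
The plan is to unfold the definition of $\P_{x}$ on both sides, integrate out the portion of the path beyond time $t$, and reduce the identity to a combinatorial symmetry for weighted discrete paths combined with a volume-preserving change of variables on the holding times. Throughout I read $F(\gamma^{-1})$ on the event $\{\gamma_{t}=y\}\subset\{\gamma_{t}\notin\W\}$ as $F((\gamma_{|[0,t]})^{-1})$, in line with the remark preceding the lemma.

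First I would localize the integral. Since $\P_{x}$-a.s.\ the underlying discrete path $p=(x_{0},e_{1},\ldots,x_{n})$ ends at $x_{n}\in\W$, and since no edge leaves $\W$ while $y\in\V$, on $\{\gamma_{t}=y\}$ the particle sits at $y$ after exactly $k$ jumps for a uniquely determined $0\le k<n$. I split $p$ as the concatenation of an initial segment $p'=(x_{0},e_{1},\ldots,e_{k},x_{k})$ from $x$ to $y$, all of whose edges lie in $\Ei$ because $\G_{|\V}$ is symmetric, and a $\Q_{y}$-distributed tail $p''$ ending in $\W$. Using the factorization
\[
\Q_{x}(\{p\})=\Big[\prod_{j=0}^{k-1}P_{x_{j},e_{j+1}}\Big]\Q_{y}(\{p''\})
\]
and the product form of the holding-time density in \eqref{defPx}, I integrate out the tail: the total mass of $\Q_{y}$ is $1$, each tail integral $\int_{0}^{\infty}e^{-\tau_{j}}\,d\tau_{j}$ equals $1$, and the constraint $\tau_{k}>s_{k}:=t-(\tau_{0}+\cdots+\tau_{k-1})$ needed for $\gamma_{t}=y$ contributes $e^{-s_{k}}$. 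Because $\tau_{0}+\cdots+\tau_{k-1}+s_{k}=t$, the LHS collapses to
\[
\lambda_{x}e^{-t}\sum_{k\ge 0}\sum_{\substack{p'\colon x\to y\\|p'|=k}}\Big[\prod_{j=0}^{k-1}P_{x_{j},e_{j+1}}\Big]\int_{\{\sum_{j}\tau_{j}\le t\}}F\big((\gamma_{|[0,t]})^{-1}\big)\,d\tau_{0}\cdots d\tau_{k-1},
\]
and the analogous expression with $x\leftrightarrow y$ and $F$ applied directly to $\gamma_{|[0,t]}$ holds for the RHS.

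To match the two expressions I would use two ingredients. The discrete weight rewrites as
\[
\lambda_{x}\prod_{j=0}^{k-1}P_{x_{j},e_{j+1}}=\frac{\prod_{j=1}^{k}\chi_{e_{j}}}{\prod_{j=1}^{k-1}\lambda_{x_{j}}},
\]
which, thanks to $\chi\circ i=\chi$, is invariant under the involution $p'\mapsto(p')^{-1}$ sending discrete paths from $x$ to $y$ of length $k$ bijectively to discrete paths from $y$ to $x$ of length $k$. Simultaneously, the map
\[
(\tau_{0},\tau_{1},\ldots,\tau_{k-1})\longmapsto(\sigma_{0},\sigma_{1},\ldots,\sigma_{k-1}):=\Big(t-\sum_{j}\tau_{j},\,\tau_{k-1},\ldots,\tau_{1}\Big)
\]
is a volume-preserving bijection (Jacobian $\pm 1$) from $\{\tau_{j}>0,\sum\tau_{j}\le t\}$ onto $\{\sigma_{j}>0,\sum\sigma_{j}\le t\}$. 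Checking the holding times at each vertex shows that under this joint bijection the continuous path $\gamma_{|[0,t]}$ appearing on the RHS coincides with $(\gamma_{|[0,t]})^{-1}$ on the LHS, so the two integrands agree term by term.

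The main obstacle is purely the combinatorial bookkeeping of the reversal of holding times: one must verify that $\sigma_{0}=s_{k}$ (so that the truncated last holding time at $y$ on the LHS becomes the first holding time on the RHS), that $\sigma_{j}=\tau_{k-j}$ for $1\le j\le k-1$ (inner holding times reverse order), and that the new last holding time $t-\sum_{j}\sigma_{j}$ on the RHS simplifies to $\tau_{0}$. Once this is done, Fubini together with the change of variables yields the claimed identity.
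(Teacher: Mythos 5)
Your proposal is correct and follows essentially the same route as the paper: expand the left-hand side via the definition of $\P_{x}$ with the tail of the path beyond time $t$ integrated out, use the $\lambda$-reversibility of the discrete weights (i.e.\ $\lambda_{x_{0}}P_{x_{0},e_{1}}\cdots P_{x_{k-1},e_{k}}=\lambda_{x_{k}}P_{x_{k},e_{k}^{-1}}\cdots P_{x_{1},e_{1}^{-1}}$, coming from $\chi\circ i=\chi$), and conclude by a measure-preserving change of the time variables realizing the reversal. The only cosmetic difference is that you work in holding-time coordinates with an explicit Jacobian check, while the paper uses the jump times $0<t_{1}<\cdots<t_{n}<t$ and the substitution $s_{i}=t-t_{i}$.
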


Note that on the $\P_{x}$-negligible event where the path $\gamma$ jumps at time $t$, the time-reversed path $\gamma_{|[0,t)}^{-1}$ starts from the vertex visited by $\gamma$ immediately before time $t$, which may not be $y$.

\begin{proof}
The left-hand side is equal to
\begin{align*}
&e^{-t}\sum_{n=0}^{\infty} \sum_{\substack{p\in \DP_{n}\\p=(x_{0},e_{1},\ldots,x_{n})\\x_{0}=x,x_{n}=y}} \lambda_{x}P_{x_{0},e_{1}}\ldots P_{x_{n-1},e_{n}} \\
&\hspace{3cm} \int_{0<t_{1}<\ldots<t_{n}<t}F(((x_{n},t-t_{n}),e_{n}^{-1},\ldots,(x_{1},t_{2}-t_{1}),e_{1}^{-1},(x_{0},t_{1})))\; \d t_{1}\ldots \d t_{n}.
\end{align*} 
Using the $\lambda$-reversibility of the discrete random walk, that is, the relation 
\[\lambda_{x_{0}} \frac{\chi_{e_{1}}}{\lambda_{x_{0}}}\ldots \frac{\chi_{e_{n}}}{\lambda_{x_{n-1}}}=\lambda_{x_{n}} \frac{\chi_{e_{n}^{-1}}}{\lambda_{x_{n}}}\ldots \frac{\chi_{e_{1}^{-1}}}{\lambda_{x_{1}}},\]
and performing the change of variables $s_{i}=t-t_{i}$ for $i\in \{1,\ldots,n\}$, we find an expression of the right-hand side.
\end{proof}

\subsection{Measures on paths and loops} In this section, we will use the probability measures $\P_{x}$ to construct two families of measures on $\CP(\G)$ which will play a central role in this work. We assume as always that $\G$ is a weighted graph with a well.

The first family of measures is defined as follows.

\begin{definition}\label{defnu} Let $x,y\in \V\setminus \W$ be proper vertices. The measure $\nu_{x,y}$ is the measure on $\CP(\G)$ such that for all non-negative measurable function $F$ on $\CP(\G)$, we have
\[\int_{\CP(\G)}F(\gamma) \d\nu_{x,y}(\gamma)=\int_{\CP(\G)\times (0,+\infty)}F(\gamma_{|[0,t)})\1_{\{\gamma_{t}=y\}} \frac{1}{\lambda_{y}}\; \d\P_{x}(\gamma)  \d t.\]
The measure $\nu_{x}$ is defined by
\[\nu_{x}=\sum_{y\in \V\setminus \W} \nu_{x,y}\lambda_{y},\]
so that for all bounded measurable function $F$ on $\CP(\G)$, we have
\begin{equation}\label{eq:defnux}
\int_{\CP(\G)}F(\gamma) \d\nu_{x}(\gamma)=\int_{\CP(\G)}\int_{0}^{\tau(\gamma)} F(\gamma_{|[0,t)}) \; \d t \d\P_{x}(\gamma).
\end{equation}
Finally, the measure $\nu$ is defined by
\[\nu=\sum_{x\in \V\setminus \W} \nu_{x}.\]
\end{definition}

In Section \ref{sec:covFK}, we shall compute various integrals with respect to these measures, and we will see in particular that they are finite. 

Let us comment and partially rephrase these definitions. For this, let us focus first on the description of the measure $\nu_{x}$ given by \eqref{eq:defnux}. It is an average under $\P_{x}$ of the following operation: 
\begin{itemize}
\item pick a path $\gamma$ starting from $x$ and let $\tau(\gamma)$ be the ($\P_{x}$-almost surely finite) time at which this path hits the well, then
\item consider the image on $\CP(\G)$ of the Lebesgue measure on the interval $[0,\tau(\gamma))$ by the map $t\mapsto \gamma_{|[0,t)}$. 
\end{itemize} 
In particular, the mass of $\nu_{x}$ is the expected hitting time of the well starting from $x$. 
Moreover, \eqref{eq:defnux} can be written at the level of measures as
\begin{equation}\label{eq:defnux2}
\nu_{x}=\Ex_{x} \int_{0}^{\tau(\gamma)} \delta_{\gamma_{|[0,t)}}\; \d t.
\end{equation}
Then, for all $y$, the measure $\nu_{x,y}$ is $\lambda_{y}^{-1}$ times the part of $\nu_{x}$ supported by paths ending at $y$.

The second family of measures is the following, and was introduced in \cite{LeJan-book}.

\begin{definition}\label{defmu} Let $x,y\in \V\setminus\W$ be proper vertices. The measure $\mu_{x,y}$ is the measure on $\CP(\G)$ such that for all non-negative measurable function $F$ on $\CP(\G)$, we have
\[\int_{\CP(\G)}F(\gamma) \d\mu_{x,y}(\gamma)=\int_{\CP(\G)\times (0,+\infty)}F(\gamma_{|[0,t)})\1_{\{\gamma_{t}=y\}} \frac{1}{\lambda_{y}}\; \d\P_{x}(\gamma)  \frac{\d t}{t}.\]
The measure $\mu_{x}$ is defined by
\[\mu_{x}=\sum_{y\in \V\setminus \W} \mu_{x,y}\lambda_{y},\]
so that for all bounded measurable function $F$ on $\CP(\G)$, we have
\[\int_{\CP(\G)}F(\gamma) \d\mu_{x}(\gamma)=\int_{\CP(\G)}\int_{0}^{\tau(\gamma)} F(\gamma_{|[0,t)}) \; \frac{\d t}{t} \d\P_{x}(\gamma).\]
Finally, the measure $\mu$ is defined by
\[\mu=\sum_{x\in \V\setminus \W} \mu_{x}.\]
\end{definition}

In analogy with \eqref{eq:defnux2}, we can write, for all proper vertex $x$,
\begin{equation}\label{eq:defmux2}
\mu_{x}=\Ex_{x}\int_{0}^{\tau(\gamma)} \delta_{\gamma_{|[0,t)}} \; \frac{\d t}{t}.
\end{equation}
The only difference between the definitions of the two families of measures that we consider is thus the presence of the factor $\frac{1}{t}$ in the second family.

The measures $\mu_{x,y}$ are finite when $x\neq y$, but the measures $\mu_{x,x}$ are infinite, because of the contribution of short loops, which in the present discrete setting are constant loops. Let us split the measure $\mu$ in a way that allows us to isolate this divergence.

\begin{definition} The measures $\mupl$ and $\tilde \mu$ are defined as follows:
\[\mupl=\sum_{x\in \V\setminus \W} \mu_{x,x}\lambda_{x} \ \mbox{ and } \ \tilde\mu=\sum_{x,y\in \V\setminus \W, x\neq y}  \mu_{x,y} \lambda_{y},\]
so that $\mupl$ is the restriction of $\mu$ to the set $\CL(\G)$ of continuous loops. We now split the measure~$\mupl$ as
\[\mupl=\mup+\mul,\]
where $\mup$ is the restriction of $\mupl$ (and hence of $\mu$) to the set of constant loops, and $\mul$ its restriction to the set of non-constant loops. 
\end{definition}

Note that the measure $\mup$ has a very simple expression: for every measurable function $F$ on~$\CP(\G)$, we have
\[\int_{\CP(\G)} F(\gamma)\d\mup(\gamma)=\sum_{x\in \V\setminus \W} \int_{0}^{+\infty} F((x,t)) e^{-t} \frac{\d t}{t}.\]
The measure $\mup$ is in particular infinite, and we shall see that $\mu-\mup=\mul+\tilde\mu$ is finite.

The following straightforward consequence of Lemma \ref{reversibility} will be useful.

\begin{lemma}\label{reversal} With the notation of Definitions \ref{defnu} and \ref{defmu}, the respective images by the map $\gamma\mapsto \gamma^{-1}$ of the measures $\nu_{x,y}$ and $\mu_{x,y}$ are the measures $\nu_{y,x}$ and $\mu_{y,x}$. The measure $\mupl$ is invariant under the map $\gamma\mapsto \gamma^{-1}$.
\end{lemma}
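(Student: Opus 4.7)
The plan is to unwind the definitions of $\nu_{x,y}$ and $\mu_{x,y}$, then apply Lemma \ref{reversibility} pointwise in $t$ to convert an integral against $\P_x$ of a functional of the reversed path into an integral against $\P_y$ of the same functional of the forward path. The only arithmetic input is that the prefactors $1/\lambda_y$ in the definitions combine with the ratio $\lambda_y/\lambda_x$ coming out of Lemma \ref{reversibility} to produce $1/\lambda_x$, which is exactly what is needed for $\nu_{y,x}$ or $\mu_{y,x}$.

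Concretely, for $\nu_{x,y}$, let $F$ be a bounded measurable function on $\CP(\G)$ and use Definition \ref{defnu} together with Fubini to write
\[
\int_{\CP(\G)} F(\gamma^{-1})\, d\nu_{x,y}(\gamma)
=\int_0^{+\infty}\frac{1}{\lambda_y}\int_{\CP(\G)} F\bigl((\gamma_{|[0,t]})^{-1}\bigr)\,\1_{\{\gamma_t=y\}}\, d\P_x(\gamma)\, dt.
\]
For each fixed $t$, Lemma \ref{reversibility} applied to the bounded measurable function $F$ gives
\[
\lambda_x\int F\bigl((\gamma_{|[0,t]})^{-1}\bigr)\1_{\{\gamma_t=y\}}\, d\P_x(\gamma)
=\lambda_y\int F(\gamma_{|[0,t]})\1_{\{\gamma_t=x\}}\, d\P_y(\gamma).
\]
Substituting this in and recognising the resulting expression as the definition of $\nu_{y,x}$ evaluated on $F$ yields
\[
\int F(\gamma^{-1})\, d\nu_{x,y}(\gamma)=\int F(\gamma)\, d\nu_{y,x}(\gamma),
\]
which is the claim for $\nu_{x,y}$.

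The argument for $\mu_{x,y}$ is identical word for word, except that the measure $dt$ on $(0,+\infty)$ is replaced by $dt/t$; this does not interact with the reversibility step in any way. For the last assertion, I would observe that by definition $\mupl=\sum_{x\in\V}\mu_{x,x}\lambda_{x}$, and each summand is individually invariant under $\gamma\mapsto\gamma^{-1}$ by the result just proved applied with $y=x$; hence so is their weighted sum.

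The only subtle point, which I would take a moment to address explicitly, is the meaning of $\gamma^{-1}$ in Lemma \ref{reversibility}: since $\P_x$ is supported on paths of infinite lifetime, the statement must be read with the understanding that $F(\gamma^{-1})$ stands for $F((\gamma_{|[0,t]})^{-1})$, as is clear from the computation in its proof. Once this convention is agreed upon, no obstacle remains and the lemma is a direct bookkeeping consequence.
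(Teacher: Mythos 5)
Your proof is correct and is exactly the argument the paper intends: the lemma is stated there as a ``straightforward consequence of Lemma~\ref{reversibility}'', and your unwinding of Definitions~\ref{defnu} and~\ref{defmu}, application of Lemma~\ref{reversibility} pointwise in $t$ (with the cancellation $\frac{1}{\lambda_y}\cdot\frac{\lambda_y}{\lambda_x}=\frac{1}{\lambda_x}$), and the decomposition $\mupl=\sum_{x\in\V}\mu_{x,x}\lambda_x$ fill in precisely those details. Your closing remark on reading $F(\gamma^{-1})$ as $F\bigl((\gamma_{|[0,t]})^{-1}\bigr)$ is the right interpretation of Lemma~\ref{reversibility}, consistent with its proof.
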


\section{Vector bundles}\label{sec:bundles}

In this section, we introduce the notion which plays the main role in our study, namely that of vector bundle over a graph.

\subsection{Bundles and bundle-valued forms} 

Let $\G=(\V,\E)$ be a graph. Let us choose a base field $\K$ to be either $\R$ or $\C$. Let $r\geq 1$ be an integer. 

\begin{definition}
A \new{$\K$-vector bundle of rank $r$ over $\G$} is a collection $\F=((\F_{x})_{x\in \V}, (\F_{e})_{e\in \E})$ of vector spaces over $\K$ which all have the same dimension $r$, and such that for all edge $e\in \E$, $\F_{e}=\F_{e^{-1}}$.
\end{definition}

We say that the bundle $\F$ is \new{Euclidean} (if $\K=\R$), or \new{Hermitian} (if $\K=\C$), if each of the vector spaces of which it consists are Euclidean,  or Hermitian. The bilinear, or sesquilinear form on the vector space $\F_{x}$ (resp. $\F_{e}$) is denoted by $\langle \cdot,\cdot \rangle_{x}$ (resp. $\langle \cdot,\cdot \rangle_{e}$). When $\K=\C$, we take this form to be antilinear in the first variable and linear in the second, according to the physicists' convention. We also denote by $\|\cdot\|_x$ and $\|\cdot\|_e$ the associated norms.

Most of our results will have the same form in the real and complex cases, up to the adjustment of a few constants. In order to uniformise the results, it turns out to be useful to introduce the parameter
\begin{equation}\label{beta0}
\beta=\dim_{\R}\K.
\end{equation}
In fact, this parameter will not be used until Section \ref{sec:prob measures sections}, and will be redefined there.\footnote{Although we will not treat this case in detail, our results hold for quaternionic bundles endowed with quaternionic unitary (symplectic) connections and quaternion Hermitian potentials. In other words, $\K=\bH$ is also allowed in our results, and in this case, $\beta=4$. We do not claim that the proofs of our results adapt directly to the quaternionic case, because they involve linear algebraic notions such as determinants which must be handled with care. However, a quaternionic bundle of rank $r$ endowed with a symplectic connection can be seen as a real bundle of rank $4r$ with an orthogonal connection, if a special one. This is similar to the fact that a complex bundle of rank $r$ with a unitary connection can be seen as a real bundle of rank $2r$ with an orthogonal connection which commutes to multiplication by $i$. Moreover, the real part trace of a (left) quaternionic linear transformation of a (right) quaternionic vector space is equal to 4 times the trace of the same transformation seen as real linear, of a real vector space. 

For the Eisenbaum, Le Jan and Sznitman isomorphisms, this remark allows for a straightforward application to the quaternionic case. For Dynkin's isomorphism, it is the form given by \eqref{eq:DynkinSandwich} which extends readily. 

The only part of the extension to the quaternionic case that we do not make explicit is the way in which Wick's formula extends. For more information on this, we refer the reader to \cite[Thm 3.1]{Wick-q}. This quaternionic formula was also used by Lupu in \cite{Lupu-Dynkin}.
}

\begin{definition}\label{def:Omegas} Let $\F$ be a vector bundle over a graph $\G=(\V,\E)$. We define the space of \new{sections} of $\F$, or \new{$0$-forms} with values in $\F$, as the vector space
\[\Omega^{0}(\F)=\bigoplus_{x\in \V}\F_{x}.\]
For all subset $\S$ of $\V$, we denote by $\Omega^{0}(\S,\F)$ the subspace of $\Omega^{0}(\F)$ consisting of all sections that vanish identically outside $\S$. Moreover, we denote by $\pi_{\S}:\Omega^{0}(\F)\to \Omega^{0}(\S,\F)$ the projection operator such that for every section $f$ of $\F$ over $\V$ and every $x\in \V$,
\begin{equation}\label{defpiS}
(\pi_{\S}f)(x)=\1_{\S}(x)f(x).
\end{equation}

We define the space of \new{$1$-forms} on $\G$ with values in $\F$ as the space
\[\Omega^{1}(\F)=\Big\{\omega \in \bigoplus_{e\in \E} \F_{e} : \forall e \in \E, \omega(e^{-1})=-\omega(e) \Big\}.\]
\end{definition}

It would of course be possible to define $1$-forms on subsets of $\E$, but we will not use this construction in this work.

A fundamental example of a vector bundle over a graph $\G$ is the trivial vector bundle of rank~$r$, denoted by $\K^{r}_{\G}$, which is such that for all $a\in \V\cup \E$, the fibre $(\K^{r}_{\G})_{a}$ over $a$ is $\K^{r}$. When $r=1$, the sections of this bundle are exactly the functions over $\V$, and $1$-forms with values in this bundle are the usual scalar-valued $1$-forms on the graph $\G$, for example in the sense of \cite{LeJan-book}.

\subsection{Hermitian structures}\label{sec:Hermitian} Let us now assume that $\G$ is a weighted graph with a well. What matters here is not so much the well as the availability of the measures $\chi$ on $\E$ and $\lambda$ on $\V$. In what follows, we use the word {\em Hermitian} in place of {\em Euclidean or Hermitian}.

For all $f_{1},f_{2}\in \Omega^{0}(\F)$, we set
\[(f_{1},f_{2})_{\Omega^{0}}=\sum_{x\in \V} \lambda_{x} \langle f_{1}(x),f_{2}(x)\rangle_{x}.\]
This endows $\Omega^{0}(\F)$ with the structure of a Hermitian space.

In the case where $\F$ is the rank $1$ trivial bundle $\K_{\G}$, of which sections are simply functions on the graph, we use the notation $\left(\cdot,\cdot\right)$ without any subscript for the Hermitian structure on~$\Omega^{0}(\K_{\G})$.

There is also a Hermitian scalar product on the space of $1$-forms with values in $\F$. 
For all $\omega_{1},\omega_{2}\in \Omega^{1}(\F)$, we define
\begin{equation}\label{defscal1}
(\omega_{1},\omega_{2})_{\Omega^{1}}=\frac{1}{2}\sum_{e\in \E} \chi_{e} \langle \omega_{1}(e),\omega_{2}(e)\rangle_{e}.
\end{equation}

\subsection{Connections}\label{sec:connections} Recall from \eqref{fibprods} the definition of the sets $\Hs$ and $\Ht$ of initial and final half-edges of the graph.

\begin{definition}
A \new{connection} on the Hermitian (resp. Euclidean) vector bundle $\F$ is a collection~$h$ of unitary (resp. orthogonal) isomorphisms between some of the vector spaces which constitute~$\F$, namely the data, 
\begin{itemize}
\item for each $(x,e)\in \Hs$, of a unitary isomorphism $h_{e,x}:\F_{x}\to \F_{e}$, and
\item for each $(e,y)\in \Ht$, of a unitary isomorphism $h_{y,e}:\F_{e}\to \F_{y}$,
\end{itemize}
such that for all $(x,e)\in \Hs$, the relation $h_{x,e^{-1}}=h_{e,x}^{-1}$ holds.
\end{definition}

This definition is illustrated in Figure \ref{conn} below. We shall denote the set of connections on $\F$ by $\A(\F)$.

\begin{figure}[h!]
\begin{center}
\includegraphics{connection}
\caption{\label{conn} \small The isomorphisms $h_{e,x}$ and $h_{y,e}$ correspond to parallel transport along the oriented edge $e$.}
\end{center}
\end{figure}

Our notation for the isomorphisms which constitute a connection may seem complicated. To remember it, one should notice that, whether $a$ and $b$ be vertices or edges, $h_{a,b}$ sends the fibre over $b$ to the fibre over $a$. Moreover, although this is redundant, the edge is written with the orientation which is consistent with the parallel transport expressed by $h_{a,b}$.\footnote{Although the word \emph{connection} is common in this context, and was used for instance by Kenyon in \cite{Kenyon},~$h$ is really, in the language of differential geometry,  a discrete analogue of the parallel transport induced by a connection.}

As a useful example, let us mention that $\K^{r}_{\G}$, the trivial bundle of rank $r$ carries a  connection that we call the canonical connection, for which all the isomorphisms $h_{a,b}$ are the identity of $\K^{r}$.

\subsection{Gauge transformations} Let us define the group of automorphisms of our vector bundle, called the \new{gauge group}. For every Hermitian (resp. Euclidean) vector space $F$, let us denote by $\Un(F)$ the group of unitary (resp. orthogonal) linear transformations of $F$. 

\begin{definition}
The \new{gauge group} of the vector bundle $\F$ over $\G=(\V,\E)$ is the group 
\[\Aut(\F)=\bigg\{j \in \prod_{x\in \V} \Un(\F_{x}) \times \prod_{e\in \E} \Un(\F_{e}) :  j_{e}=j_{e^{-1}}  \mbox{ \rm for all } e\in \E\bigg\}.\]
The elements of the gauge group are called \new{gauge transformations}.
\end{definition}

The gauge group will act on all the objects that we define and we will systematically say how. For instance, $\Aut(\F)$ acts on the set $\A(\F)$ of connections on $\F$, as follows. Let $h$ be a connection on $\F$. Let $j$ be a gauge transformation. The connection $j\cdot h$ is defined by
\begin{align*}
&(j\cdot h)_{x,e}=j_{x}\circ h_{x,e}\circ j_{e}^{-1}\ \, \mbox{ for } (x,e) \in \Hs \mbox{ and } \\
&(j\cdot h)_{e,y}=j_{e}\circ h_{e,y}\circ j_{y}^{-1}\ \  \mbox{ for } (e,y)\in \Ht.
\end{align*}

\subsection{Holonomy}

Let $h$ be a connection on the vector bundle $\F$ over the graph $\G$. For each edge $e\in \E$, we define the holonomy along $e$ as the isomorphism
\[h_{e}=h_{\ol{e},e} \circ h_{e,\ul{e}}:\F_{\ul{e}}\to \F_{\ol{e}}.\]
The holonomy of the connection $h$ along a discrete path $p=(x_{0},e_{1},\ldots,e_{n},x_{n})$ is defined as
\[\hol_{h}(p)=h_{e_{n}}\circ \cdots \circ h_{e_{1}}:\F_{x_{0}} \to \F_{x_{n}}.\]
This definition extends to the case of a continuous path $\gamma$ with underlying discrete path $p$ (see Section \ref{sec:CP}) by setting
\[\hol_{h}(\gamma)=\hol_{h}(p).\]

Let us write the modification of the holonomy along a continuous path induced by the action of a gauge transformation on the connection. Suppose $h$ is a connection, $j$ a gauge transformation, and $\gamma$ a path, discrete or continuous, in our graph. Then
\[\hol_{j\cdot h}(\gamma)=j_{\ol{\gamma}} \circ \hol_{h}(\gamma)\circ  j_{\ul{\gamma}}^{-1}.\]

\subsection{Twisted holonomy}\label{sec:twistedhol} The definition of the holonomy along a continuous path which we just gave is natural, but it depends only on the discrete path which underlies it.
We propose a definition of the holonomy along a continuous-time trajectory in a graph which is genuinely dependent on its continuous time structure.  

In order to give this definition, we need to introduce a new ingredient. From the bundle $\F$, we can form a new bundle $\End(\F)$ such that for all $a\in \V\cup \E$, the fibre of $\End(\F)$ over $a$ is
\[\End(\F)_{a}=\End(\F_{a}),\]
the vector space of $\K$-linear endomorphisms of $\F_{a}$. We give a name to some sections of this new bundle.

\begin{definition}\label{def:potentiel} Let $\G$ be a weighted graph with a well. Let $\F$ be a Euclidean (resp. Hermitian) vector bundle over $\G$. A \new{potential} on $\F$ is an element $H\in \Omega^{0}(\V\setminus\W,\End(\F))$, that is, a section of the vector bundle $\End(\F)$ vanishing over the well and such that for every vertex $x\in \V$, the operator $H_{x}\in \End(\F_{x})$ is symmetric (resp. Hermitian). 
\end{definition}

We can now give an enhanced definition of the holonomy along a continuous path.

\begin{definition}\label{def:twisted-holo} Let $h$ be a connection and $H$ a potential on a Hermitian vector bundle $\F$ over a graph $\G$. Let $\gamma=((x_{0},\tau_{0}),e_{1},\ldots,e_{n},(x_{n},\tau_{n}))$ be a continuous path in $\G$. Assume that $\tau_{n}<\infty$ or $H_{x_{n}}=0$. The {\em holonomy of $h$ twisted by $H$ along $\gamma$} is the linear map
\[\hol_{h,H}(\gamma)=e^{-\tau_{n}H_{x_{n}}} \circ h_{e_{n}}\circ \cdots \circ e^{-\tau_{1}H_{x_{1}}} \circ h_{e_{1}} \circ e^{-\tau_{0}H_{x_{0}}}:\F_{x_{0}} \to \F_{x_{n}}.\]
\end{definition}

Let us emphasize that the twisted holonomy does not behave well with respect to the time-reversal of paths: in general, even if the path $\gamma^{-1}$ is defined (see the end of Section \ref{sec:CP}), it may be that
\[\hol_{h,H}(\gamma^{-1})\neq \hol_{h,H}(\gamma)^{-1}.\]
For example, if $\gamma=((x,\tau))$ is a constant path at a vertex $x$ such that $H_{x}\neq 0$, then $\gamma^{-1}=\gamma$ but $\hol_{h,H}(\gamma^{-1})=e^{-\tau H_{x}}\neq e^{\tau H_{x}}=\hol_{h,H}(\gamma)^{-1}$. Note however that the equality
\begin{equation}\label{inverse-adjoint}
\hol_{h,H}(\gamma^{-1})= \hol_{h,H}(\gamma)^{*}
\end{equation}
holds, and we will indeed use this equality a lot.

Let us explain how twisted holonomies are affected by gauge transformations. For this, we need to explain how the gauge group acts on the space of potentials. Let $H$ be a potential and~$j$ a gauge transformation. We define $j\cdot H$ as the potential such that for all vertex $x$,
\[(j\cdot H)_{x}=j_{x}\circ H_{x} \circ j_{x}^{-1}.\]
Then, for all continuous path $\gamma$, we have the relation
\[\hol_{j\cdot h,j\cdot H}(\gamma)=j_{\ol{\gamma}} \circ \hol_{h,H}(\gamma)\circ  j_{\ul{\gamma}}^{-1}.\]

We would expect the definition of the twisted holonomy to seem rather strange to some readers, especially after the comment that we just made, and we hope that  the results that will be proved in this paper will convince them of its interest. In the meantime, let us devote the next paragraph to a discussion of two points of view from which, we hope, this definition can be found natural.

\subsection{Twisted holonomy and hidden loops}

The first point of view is quite informal and inspired by quantum mechanics. We already alluded to the fact that a path in a graph can be seen as a discrete model for the time evolution of a particle in space. In this picture, the connection represents the way in which an ambient gauge field acts on the state of the particle as it moves around. If we now understand the operator $H_{x}$, up to a factor $i$ or $i/\hbar$, as the Hamiltonian of the particle at the point $x$, then the term $e^{-\tau_{x}H_{x}}$ represents the evolution of the state of the particle as it spends a stretch of time $\tau_{x}$ at the vertex $x$, and the twisted holonomy along the path followed by the particle simply represents the subsequent modification of its state. 

From a second and more mathematical point of view, the twisted holonomy as we defined it can be seen, at least in the case where the operators $H_{x}$ are non-negative, as an averaged version of the ordinary holonomy in a larger graph, which has a small loop attached to each vertex. 

To explain this, let us consider a weighted graph with a well $\G$. Let us associate to $\G$ a new graph $\Gc=(\V,\Ec)$ which has the same vertices as $\G$, and an extended set of edges
\[\Ec=\E\cup\bigcup_{x\in \V} \{{ l}_{x},{ l}_{x}^{-1}\},\]
where the edge ${ l}_{x}$ and its inverse have source and target $x$. We call these new edges \new{looping edges}.

\begin{figure}[h!]
\begin{center}
\includegraphics[height=3.7cm]{graphabouclettes}
\caption{\small The graph $\Gc$ obtained from the graph depicted in Figure \ref{exgraph}.}
\end{center}
\end{figure}

Let us assume that for every $x\in \V$, the operator $H_{x}$ is Hermitian non-negative on $\F_{x}$. For each $x\in \V$, let us associate to the edge ${ l}_{x}$ a positive jump rate $r_{x}$ and a holonomy $h_{{ l}_{x}}$, in such a way that
\[H_{x}=r_{x} \left(2\Id_{\F_{x}}-(h_{{ l}_{x}}+h_{{ l}_{x}}^{-1})\right).\]
It is indeed the case that every Hermitian non-negative operator can be written, although not uniquely, as $r(2\Id-(U+U^{-1}))$ with $r>0$ and $U$ unitary. One can for example take any $r$ larger than one quarter of the largest eigenvalue of $H$, and $U=\exp i \arccos (1-\frac{H}{2r})$.

Let us now consider the continuous time random walk in the graph $\Gc$ which, when it stands at a vertex $x$, jumps from $x$ through the edge $e\in \E$ at rate ${\chi_{e}}/{\lambda_{x}}$, and through each of the looping edges $l_{x}$ and $l_{x}^{-1}$ at rate $r_{x}$. It will be convenient to use probabilistic language, and for this, we will denote this random continuous path on $\Gc$ by $\Gamma^{\circ}$. Let us emphasise that $\Gamma^{\circ}$ is not exactly a random walk on the graph $\Gc$ in the sense of Section \ref{sec:RW}, because it jumps at rate $1+2r_{x}$ from the vertex $x$, whereas the random walk on a weighted graph normally jumps at rate~$1$. Incidentally, it is possible, if one so wishes, to take all $r_{x}$ equal to some large enough real number, with the effect that the jump rate of $\Gamma^{\circ}$ becomes constant over the set of vertices.

The reason for our change of jumping rate in the definition of $\Gamma^{\circ}$ is the following: from $\Gamma^{\circ}$, we can recover the usual random walk, which we will denote by $\Gamma$, in the original graph $\G$, by simply ignoring the jumps across the loop-edges of $\Gc$. More formally, there is a shearing map $S:\CP(\Gc)\to \CP(\G)$ which forgets the jumps across the looping edges of $\Ec\setminus \E$, and we are claiming that $S(\Gamma^{\circ})$ and $\Gamma$ have the same distribution. We reckon that a formal definition of $\Gamma^{\circ}$ and the map $S$, followed by a formal proof of this claim, would take a lot of space and time, and bring little additional light on the present discussion. 

Let us now compare the ordinary holonomy $\hol^{\circ}_{h}$ of the connection $h$ in the graph $\Gc$ with the holonomy $\hol_{h,H}$ of $h$  twisted by $H$ in $\G$. 

\begin{proposition}\label{prop:shearing} 
For all $t\geq 0$ and all $x,y\in \V$, the following equality almost surely holds in $\mathrm{Hom}(\F_x,\F_{y})$:
\[\Ex_{x}[\hol^{\circ}_{h}(\Gamma^{\circ}_{|[0,t)})|S(\Gamma^{\circ}_{|[0,t)})]\1_{\{\Gamma^{\circ}_{t}=y\}}=\hol_{h,H}(S(\Gamma^{\circ}_{|[0,t)}))\1_{\{\Gamma^{\circ}_{t}=y\}}.\]
\end{proposition}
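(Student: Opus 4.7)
The plan is to factor the ordinary holonomy $\hol^\circ_h(\Gamma^\circ_{|[0,t]})$ according to the sheared sojourn intervals, and then to evaluate the conditional expectation of each factor using the independence of the Poisson processes driving $\E$-edge jumps and looping-edge jumps at each vertex. Fixing a typical realisation $\gamma=((x_0,\tau_0),e_1,\ldots,e_n,(x_n,\tau_n))$ of $S(\Gamma^\circ_{|[0,t]})$, the path $\Gamma^\circ_{|[0,t]}$ is obtained by inserting in each sheared sojourn at $x_k$ a finite random sequence of traversals of $l_{x_k}$ and $l_{x_k}^{-1}$, so the ordinary holonomy decomposes as
\begin{equation*}
\hol^\circ_h(\Gamma^\circ_{|[0,t]}) \;=\; L_n\circ h_{e_n}\circ L_{n-1}\circ h_{e_{n-1}}\circ\cdots\circ h_{e_1}\circ L_0,
\end{equation*}
where $L_k\in\End(\F_{x_k})$ is the ordered product of the factors $h_{l_{x_k}}^{\pm 1}$ corresponding to the looping jumps performed at $x_k$.

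The key probabilistic input I will use is that at each vertex $x\in\V$ the jumps of $\Gamma^\circ$ through $\E$-edges and through each of the two looping edges are driven by independent Poisson clocks of rates $1$, $r_x$ and $r_x$. It follows that on any sojourn at $x$ the looping-edge jumps form a Poisson process of rate $2r_x$, independent of the $\E$-edge clock, with each jump independently of type $l_x$ or $l_x^{-1}$ with equal probability. Consequently, conditionally on $S(\Gamma^\circ_{|[0,t]})=\gamma$ the blocks $L_0,\ldots,L_n$ are independent, and each $L_k$ is distributed as the ordered product of $N_k$ i.i.d.\ uniformly signed copies of $h_{l_{x_k}}$, where $N_k$ is Poisson with parameter $2r_{x_k}\tau_k$. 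The one subtlety that requires care is the terminal block $L_n$, whose sojourn is terminated by the deterministic time $t$ rather than by an $\E$-jump; here the independence of the $\E$-clock and the loop clocks shows that conditioning on the absence of an $\E$-jump in $[t-\tau_n,t]$ does not alter the law of the loop-jump process, so the same description applies.

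A direct computation using independence of the signs then gives
\begin{equation*}
\Ex\bigl[L_k\bigm|\tau_k\bigr] \;=\; \sum_{m\geq 0}e^{-2r_{x_k}\tau_k}\frac{(2r_{x_k}\tau_k)^m}{m!}\left(\frac{h_{l_{x_k}}+h_{l_{x_k}}^{-1}}{2}\right)^{\!m} \;=\; \exp\bigl(-\tau_k H_{x_k}\bigr),
\end{equation*}
where the last equality uses the defining identity $H_x = r_x\bigl(2\,\Id_{\F_x}-h_{l_x}-h_{l_x}^{-1}\bigr)$. Combining this with the block decomposition and the conditional independence of the $L_k$, the conditional expectation factors through the composition and produces exactly
$e^{-\tau_n H_{x_n}}\circ h_{e_n}\circ\cdots\circ h_{e_1}\circ e^{-\tau_0 H_{x_0}} = \hol_{h,H}(\gamma)$, which is the claimed identity. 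I expect the main obstacle to be the careful bookkeeping required to identify the correct conditional law of the loop-jump processes at each $x_k$, including the terminal sojourn; once this is in place the rest is an elementary Poisson computation.
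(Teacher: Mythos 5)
Your proposal is correct and follows essentially the same route as the paper's proof: conditionally on the sheared path, the loop-edge excursions during each sojourn at $x_k$ form independent Poisson ensembles of parameter $2r_{x_k}\tau_k$ with uniformly random signs, and the per-block expectation $e^{-2r_{x_k}\tau_k}\sum_m \frac{(2r_{x_k}\tau_k)^m}{m!}\bigl(\tfrac{h_{l_{x_k}}+h_{l_{x_k}}^{-1}}{2}\bigr)^m=e^{-\tau_k H_{x_k}}$ is exactly the computation carried out there. Your extra remark on the terminal sojourn (cut by the deterministic time $t$ rather than an $\E$-jump) is a small point of care that the paper passes over silently, but it does not change the argument.
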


The way in which we want to read this equality is the following: for every path $\gamma$ in $\G$ with lifetime $t$, the holonomy of $h$ twisted by $H$ along $\gamma$ is given by 
\[\hol_{h,H}(\gamma)=\Ex[\hol^{\circ}_{h}(\Gamma^{\circ}_{t})|S(\Gamma^{\circ}_{|[0,t)})=\gamma],\]
the average over a set of non-observed paths in the larger graph $\Gc$ of an ordinary unitary holonomy, and of which the observed path $\gamma$ is an approximation. 

\begin{proof} Conditional on $S(\Gamma^{\circ}_{|[0,t)})=((x_{0},\tau_{0}),e_{1},\ldots,(x_{n},\tau_{n}))$, the path $\Gamma^{\circ}_{|[0,t)}$ is equal to $S(\Gamma^{\circ}_{|[0,t)})$ to which have been grafted, at each of its stays at one of the vertices $x_{1},\ldots,x_{n}$, an independent Poissonian set of excursions through the loop-edges at this vertex. 

For each $k\in \{0,\ldots,n\}$, let $N_{k}$ be an independent Poisson variable with parameter $2r_{x_{k}}\tau_{k}$ and let $\beta_{x_{k},\tau_{k}}$ be a random discrete loop of length $N_{k}$ based at $x_{k}$, which traverses $N_{k}$ times one of the edges $l_{x}$ and $l_{x}^{-1}$ (and no other), independently with equal probability at each jump. Then
\begin{align*}
\Ex_{x}[\hol^{\circ}_{h}(X^{\circ}_{|[0,t)})|S(X^{\circ}_{|[0,t)})=((x_{0},\tau_{0}),e_{1},\ldots,(x_{n},\tau_{n}))]&=\\
&\hspace{-4.2cm} \Ex[\hol^{\circ}_{h}(\beta_{x_{n},\tau_{n}})]\circ h_{e_{n}}\circ \Ex[\hol^{\circ}_{h}(\beta_{x_{n-1},\tau_{n-1}})] \circ \ldots \circ h_{e_{1}} \circ \Ex[\hol^{\circ}_{h}(\beta_{x_{0},\tau_{0}})].
\end{align*}
On the other hand, for each $k\in \{0,\ldots,n\}$,
\begin{align*}
\Ex[\hol^{\circ}_{h}(\beta_{x_{k},\tau_{k}})]&=e^{-2r_{x_{k}}\tau_{k}}\sum_{n=0}^{\infty} \frac{(2r_{x_{k}}\tau_{k})^{n}}{n!} \frac{1}{2^{n}}\sum_{m=0}^{n} \binom{n}{m} h_{l_{x_{k}}}^{m} h_{l_{x_{k}}^{-1}}^{n-m}\\
&=e^{-\tau_{k}r_{x_{k}}(2\Id_{\F_{x_{k}}}-(h_{l_{x_{k}}}+h_{l_{x_{k}}}^{-1}))}\\
&=e^{-\tau_{k}H_{x_{k}}},
\end{align*}
and the expected result follows.
\end{proof}

\subsection{Occupation measures and local times}

A particular case of interest of the twisted holonomy is that where for every vertex $x$, the operator $H_{x}$ is scalar. In this case, the twisted holonomy of a path can be related to its occupation measure, or to its local time, which we now define. 

\begin{definition}\label{local time} Let $\gamma=((x_{0},\tau_{0}),e_{1},\ldots,e_{n},(x_{n},\tau_{n}))$ be a continuous path in the graph $\G$. The \new{occupation measure} of $\gamma$ is the measure $\vartheta(\gamma)$ on $\V$ such that for all vertex $x$,
\[\vartheta_{x}(\gamma)=\sum_{k=0}^{n}\delta_{x,x_{k}}\tau_{k}.\]
The \new{local time} of $\gamma$ is the density of its occupation measure with respect to the reference measure~$\lambda$: for every vertex $x$,
\[\ell_{x}(\gamma)=\frac{\vartheta_{x}(\gamma)}{\lambda_{x}}=\frac{1}{\lambda_{x}}\sum_{k=0}^{n}\delta_{x,x_{k}}\tau_{k}.\]
\end{definition}

From Section \ref{sec:LJS} on, we will consider Poissonian ensembles of paths. We will then use the following extended definition of the occupation measures and local times: if $\mathcal P$ is a set of paths, we define, for all vertex $x$,
\[\ell_{x}(\mathcal P)=\sum_{\gamma\in \mathcal P}\ell_{x}(\gamma) \mbox{ and } \vartheta_{x}(\mathcal P)=\sum_{\gamma\in \mathcal P}\vartheta_{x}(\gamma).\]

The following lemma is a straightforward consequence of the definition of the twisted holonomy. In its statement, we use the same notation for a scalar operator and its unique eigenvalue.

\begin{lemma}\label{hol-tloc} Assume that for all vertex $x$, the operator $H_{x}$ is scalar on $\F_{x}$, that is, equal to $H_{x}\Id_{\F_{x}}$ for some $H_{x}\in \K$. Let $\gamma$ be a continuous path. Assume that $\tau(\gamma)<+\infty$  or $H_{\ol{\gamma}}=0$. Then one has the equality
\begin{equation}\label{sortH}
\hol_{h,H}(\gamma)=e^{-\sum_{x\in \V} H_{x}\vartheta_{x}(\gamma)}\hol_{h}(\gamma).
\end{equation}
\end{lemma}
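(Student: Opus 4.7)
The plan is essentially to unwind the definition of the twisted holonomy given in Definition~\ref{def:twisted-holo} and use the fact that scalar operators commute with everything. Write $\gamma=((x_{0},\tau_{0}),e_{1},\ldots,e_{n},(x_{n},\tau_{n}))$. By assumption, for every $k$ the operator $H_{x_{k}}\in\End(\F_{x_{k}})$ is a scalar multiple of the identity, so $e^{-\tau_{k}H_{x_{k}}}$ equals the scalar $e^{-\tau_{k}H_{x_{k}}}\in\K$ times $\Id_{\F_{x_{k}}}$. The hypothesis that $\tau_{n}<\infty$ or $H_{\ol{\gamma}}=0$ ensures that the terminal exponential $e^{-\tau_{n}H_{x_{n}}}$ is well defined (interpreting $e^{-\infty\cdot 0}=1$).

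Next I would factor each of these scalars out of the composition defining $\hol_{h,H}(\gamma)$. Since scalar endomorphisms commute with the $\K$-linear maps $h_{e_{j}}$, we obtain
\[
\hol_{h,H}(\gamma) = \Bigl(\prod_{k=0}^{n} e^{-\tau_{k}H_{x_{k}}}\Bigr)\, h_{e_{n}}\circ\cdots\circ h_{e_{1}} = e^{-\sum_{k=0}^{n}\tau_{k}H_{x_{k}}}\,\hol_{h}(\gamma),
\]
where in the last step we used the additivity of the exponential for commuting scalars and the definition of $\hol_{h}(\gamma)$ given in Section~\ref{sec:twistedhol}.

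It only remains to rewrite the exponent in terms of the occupation measure. By Definition~\ref{local time}, for every $x\in\V$ we have $\vartheta_{x}(\gamma)=\sum_{k=0}^{n}\delta_{x,x_{k}}\tau_{k}$, while $H_{y}=0$ for $y\in\W$ since a potential vanishes on the well. Grouping the sum over $k$ according to the value $x=x_{k}$ therefore yields
\[
\sum_{k=0}^{n}\tau_{k}H_{x_{k}} = \sum_{x\in\V} H_{x}\,\vartheta_{x}(\gamma),
\]
which gives \eqref{sortH}. There is no real obstacle here; the only point that needs minimal care is the well-definedness of $e^{-\tau_{n}H_{x_{n}}}$ when $\tau_{n}=+\infty$, which is precisely why the hypothesis $H_{\ol{\gamma}}=0$ is imposed in that case.
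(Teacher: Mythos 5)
Your proof is correct and is exactly the straightforward unwinding of Definition~\ref{def:twisted-holo} that the paper has in mind: it states Lemma~\ref{hol-tloc} as "a straightforward consequence of the definition" and omits the proof entirely. Pulling the scalar factors $e^{-\tau_{k}H_{x_{k}}}$ through the maps $h_{e_{j}}$ and regrouping the holding times into the occupation measure is the intended argument, and your remark on the well-definedness of the terminal factor when $\tau_{n}=+\infty$ matches the hypothesis of the lemma.
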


This lemma shows how the twisted holonomy, which, in the classical language of the theory of Markov processes, could be called a multiplicative functional, extends the classical definition of exponential functionals of local times. 

\subsection{Differentials} Let us consider a graph $\G=(\V,\E)$, a vector bundle $\F$ over $\G$ and a connection $h$ on $\F$. 

\begin{definition}
The \new{differential} is the linear operator 
\[d:\Omega^{0}(\F) \to \Omega^{1}(\F)\subset \bigoplus_{e\in \E}\F_{e}\]
defined by setting, for every $f\in \Omega^{0}(\F)$ and every $e\in \E$,
\[(df)(e)=h_{e^{-1},\ol{e}} f(\ol{e})-h_{e,\ul{e}} f(\ul{e}).\]
\end{definition}
The range of $d$ is contained in $\Omega^{1}(\F)$: indeed, if the inverse of the edge $e$ is defined, then replacing $e$ by $e^{-1}$ in this definition exchanges $\ul{e}$ and $\ol{e}$, so that $(df)(e^{-1})=-(df)(e)$ as expected.

The definition of $d$ depends on the connection $h$, but we prefer not to use the notation $d^{h}$, which we find too heavy.

\begin{figure}[h!]
\begin{center}
\includegraphics[width=10cm]{led}
\caption{\small The computation of $df(e)$.}
\end{center}
\end{figure}

Let us now assume that the graph $\G$ is a weighted graph with a well. Recall from \eqref{eq:defPxe} the notation $P_{x,e}=\frac{\chi_{e}}{\lambda_{x}}$.

\begin{definition}
The \new{codifferential} is the operator
\[d^{*}:\Omega^{1}(\F) \to \Omega^{0}(\F)\]
defined by setting, for all $\omega \in \Omega^{1}(\F)$ and all $x\in \U$,
\begin{equation}\label{d*propre}
d^{*} \omega(x)=\sum_{e\in \E : \ol{e}=x} P_{x,e} h_{x,e}\omega(e)=-\sum_{e\in \E : \ul{e}=x} P_{x,e} h_{x,e^{-1}}\omega(e).
\end{equation}
\end{definition}
The following lemma justifies the notation that we used for the codifferential.

\begin{lemma}\label{adjoint} The operators $d$ and $d^{*}$ are adjoint of each other with respect to the Hermitian forms $(\cdot,\cdot)_{\Omega^{0}}$ and $(\cdot,\cdot)_{\Omega^{1}}$.
\end{lemma}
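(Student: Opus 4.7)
The plan is a direct computation: expand $(df,\omega)_{\Omega^1}$ using the definitions of the differential and of the scalar product on $\Omega^1$, then manipulate the result into $(f,d^{*}\omega)_{\Omega^0}$. The essential algebraic input is that each $h_{a,b}$ is unitary, so its Hermitian adjoint is its inverse, and the compatibility relation $h_{x,e^{-1}}=h_{e,x}^{-1}$ lets one rewrite those inverses in terms of the other isomorphisms appearing in the codifferential.

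More concretely, I would start from
\[
(df,\omega)_{\Omega^1}=\sum_{e\in\E}s_e\chi_e\Bigl(\langle h_{e^{-1},\ol e}f(\ol e),\omega(e)\rangle_e-\langle h_{e,\ul e}f(\ul e),\omega(e)\rangle_e\Bigr),
\]
and move the unitary maps to the right side of each inner product. Using $(h_{e^{-1},\ol e})^{*}=(h_{e^{-1},\ol e})^{-1}=h_{\ol e,e}$ (which is exactly the relation $h_{x,e^{-1}}=h_{e,x}^{-1}$ applied at $x=\ol e$ to the edge $e^{-1}$) for the first term, and $(h_{e,\ul e})^{*}=h_{\ul e,e^{-1}}$ for the second, the expression becomes
\[
\sum_{e\in\E}s_e\chi_e\langle f(\ol e),h_{\ol e,e}\omega(e)\rangle_{\ol e}-\sum_{e\in\E}s_e\chi_e\langle f(\ul e),h_{\ul e,e^{-1}}\omega(e)\rangle_{\ul e}.
\]
Now I would regroup each sum over edges as a sum over vertices $x$ with the inner sum running over edges $e$ with $\ol e=x$ (resp.\ $\ul e=x$), pull $f(x)$ out of the inner product, and recognise the bracket as $\lambda_x\,d^{*}\omega(x)$ by the very definition of $d^{*}$. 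A final overall sum $\sum_{x\in\U}\lambda_x\langle f(x),d^{*}\omega(x)\rangle_x$ is precisely $(f,d^{*}\omega)_{\Omega^0}$.

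The main point to handle carefully is the bookkeeping of the symmetry factor $s_e$: for edges $e\in\E\setminus\Ei$ each geometric edge is represented by a single directed edge, while for $e\in\Ei$ both orientations occur in $\E$. The antisymmetry $(df)(e^{-1})=-(df)(e)$, $\omega(e^{-1})=-\omega(e)$ makes the $e$ and $e^{-1}$ contributions to the sum equal, so the two halves produced by $s_e=\tfrac12$ are exactly what is needed for the resulting vertex sums to match the coefficients $s_e\chi_e$ appearing in the codifferential; nothing else needs to be adjusted. A secondary subtlety is that when $\ul e=x$ and $e\notin\Ei$ the symbol $h_{x,e^{-1}}$ in the definition of $d^{*}$ must be interpreted as $(h_{e,x})^{*}$, which is the natural extension consistent with the relation for $\Ei$, so no genuine obstacle arises — this is really the only place where one has to be slightly vigilant about notation. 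Apart from this, the lemma is a purely formal consequence of unitarity of the connection and of the compatibility $h_{x,e^{-1}}=h_{e,x}^{-1}$.
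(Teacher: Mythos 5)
Your proof is correct and is essentially the paper's own argument run in the opposite direction: the paper expands $(f,d^{*}\omega)_{\Omega^{0}}$, converts the vertex sums into edge sums, and uses unitarity of $h$ (the same identities $(h_{e,\ul{e}})^{*}=h_{\ul{e},e^{-1}}$ and $(h_{e^{-1},\ol{e}})^{*}=h_{\ol{e},e}$) to arrive at $(df,\omega)_{\Omega^{1}}$. The only cosmetic difference is that your worry about pairing $e$ with $e^{-1}$ is unnecessary: since the factor $s_{e}\chi_{e}$ appears both in $(\cdot,\cdot)_{\Omega^{1}}$ and in the definition of $d^{*}$, the identification is term by term, exactly as in the paper.
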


Let us emphasise that the proof of this lemma depends on the fact that the connection $h$ consists in unitary operators.

\begin{proof}
Let $f\in\Omega^{0}(\U,\F)$ and $\omega\in\Omega^{1}(\E,\F)$. We have
\begin{align*}
(f,d^{*}\omega)_{\Omega^{0}}& = \sum_{x\in \V} \lambda_{x} \langle f(x),d^{*}\omega(x)\rangle_{x} \\
& = \frac{1}{2}\sum_{x\in \V} \sum_{e\in \E: \ol{e}=x}  \chi_{e} \langle f(x) , h_{x,e} \omega(e)\rangle_{x} -\frac{1}{2}\sum_{x\in \V} \sum_{e\in \E: \ul{e}=x} \chi_{e} \langle f(x), h_{x,e^{-1}} \omega(e)\rangle_{x}\\
& = \frac{1}{2}\sum_{e\in \E} \chi_{e} \langle f(\ol{e}), h_{\ol{e},e} \omega(e)\rangle_{\ol{e}}-\frac{1}{2}\sum_{e\in \E} \chi_{e} \langle  f(\ul{e}),h_{\ul{e},e^{-1}}\omega(e)\rangle_{\ul{e}}\\
& = \frac{1}{2}\sum_{e\in \E}  \chi_{e} \langle h_{e^{-1},\ol{e}} f(\ol{e}),\omega(e)\rangle_{e}-\frac{1}{2}\sum_{e\in \E} \chi_{e} \langle h_{e,\ul{e}} f(\ul{e}),\omega(e)\rangle_{e}\\
& =\frac{1}{2} \sum_{e\in \E} \chi_{e} \langle df (e),\omega(e)\rangle_{e}= (df,\omega)_{\Omega^{1}}\,.
\end{align*}
We used the unitarity of $h$ between the third and the fourth line.
\end{proof}

\subsection{Laplacians}\label{sec:Laplacians} To the situation that we are describing since the beginning of this paper, is associated a Laplace operator, which is our main object of interest. A simple definition of this operator, acting on $\Omega^{0}(\F)$, would be the following:
\[\ol{\Delta}_{h}=d^{*}\circ d \in \End(\Omega^{0}(\F)).\] 
However, the Laplacian that we will use most of the time is the compression of this operator to the subspace $\Omega^{0}(\V\setminus\W,\F)$ of sections that vanish on the well (see Definition \ref{def:Omegas}). In the following definition, we will use the notation $\pi_{\V\setminus \W}$, defined by \eqref{defpiS}, for the orthogonal projection of $\Omega^{0}(\F)$ onto $\Omega^{0}(\V\setminus \W,\F)$. The adjoint $\pi_{\V\setminus \W}^{*}$ of this projection is the inclusion of $\Omega^{0}(\V\setminus \W,\F)$ into $\Omega^{0}(\F)$.

\begin{definition} Let $\G$ be a weighted graph with a well. Let $\F$ be a vector bundle over $\G$. Let~$h$ be a connection on $\F$. The $h$-Laplacian, or simply Laplacian, is defined on $\Omega^{0}(\V\setminus \W,\F)$ as the linear mapping
\[\Delta_{h}=\pi_{\V\setminus \W} \circ d^{*} \circ d \circ \pi^{*}_{\V\setminus \W} \in \End(\Omega^{0}(\V\setminus \W,\F)).\]
\end{definition}

Consider $f\in \Omega^{0}(\V\setminus \W,\F)$ and choose $x\in \V\setminus \W$. Thanks to \eqref{d*propre}, we have
\begin{align}
(\Delta_{h} f)(x)&=-\sum_{e\in \E: \ul{e}=x}P_{x,e} h_{x,e^{-1}} (df)(e) \nonumber \\
&=\sum_{e\in \E: \ul{e}=x}P_{x,e}(f(x)-h_{e^{-1}} f(\ol{e})) \nonumber\\
&=f(x)-\sum_{e\in \E : \ul{e}=x} P_{x,e}\1_{\V\setminus \W}(\ol{e}) h_{e^{-1}} f(\ol{e}). \label{I-Ph}
\end{align}
This last expression will be useful in the sequel.

Using the Laplacian, we can define the Dirichlet energy functional on $\Omega^{0}(\V\setminus\W,\F)$.

\begin{definition}\label{Dirichlet energy} Let $f$ be an element of $\Omega^{0}(\V\setminus\W,\F)$. The  \new{Dirichlet energy} of $f$ is the real number
\[\mathcal{E}_{h}(f)=\left(f,\Delta_{h}f\right)_{\Omega^{0}}.\]
\end{definition}
Remembering that an element of $\Omega^{0}(\V\setminus \W,\F)$ is seen as a section of $\F$ over $\V$ vanishing over~$\W$, a short computation yields
\begin{equation}\label{eq:dirichlet-energy}
\mathcal{E}_{h}(f)=\frac{1}{2} \sum_{e\in \E}\chi_{e} \| f(\ul{e})-h_{e^{-1}}f(\ol{e})\|^{2}_{e},
\end{equation}
which makes it obvious that $\mathcal{E}_{h}(f)$ is non-negative.

\begin{proposition}\label{deltainv} The operator $\Delta_{h}$ is Hermitian, non-negative, and invertible on $\Omega^{0}(\V\setminus\W,\F)$.
\end{proposition}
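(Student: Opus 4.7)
My plan has three parts corresponding to the three conclusions. The strategy is to reduce everything to the Dirichlet energy identity \eqref{eq:dirichlet-energy} together with Lemma~\ref{adjoint}, and then to use the connectivity hypothesis on $\G_{|\V}$ to force the kernel of $\Delta_{h}$ to be trivial.

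For \emph{Hermiticity}, I would take $f,g\in\Omega^{0}(\V,\F)$ and observe that the projection $\pi_{\V}$ is harmless inside the scalar product on $\Omega^{0}(\U,\F)$ because sections in $\Omega^{0}(\V,\F)$ vanish on $\W$: concretely, $(\pi_{\V}\eta,g)_{\Omega^{0}}=(\eta,g)_{\Omega^{0}}$ for any $\eta\in\Omega^{0}(\U,\F)$. Using Lemma~\ref{adjoint} twice, I get
\[(\Delta_{h}f,g)_{\Omega^{0}}=(d^{*}df,g)_{\Omega^{0}}=(df,dg)_{\Omega^{1}}=(f,d^{*}dg)_{\Omega^{0}}=(f,\Delta_{h}g)_{\Omega^{0}},\]
which is exactly the Hermitian property. \emph{Non-negativity} follows from the same computation with $g=f$, giving $(\Delta_{h}f,f)_{\Omega^{0}}=(df,df)_{\Omega^{1}}=\mathcal{E}_{h}(f)\geq 0$, where the last inequality is the manifestly non-negative expression \eqref{eq:dirichlet-energy}.

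The substantive point is \emph{invertibility}. Since $\Omega^{0}(\V,\F)$ is finite dimensional and $\Delta_{h}$ is Hermitian, it suffices to prove $\ker\Delta_{h}=\{0\}$. Suppose $\Delta_{h}f=0$. Then $\mathcal{E}_{h}(f)=0$, so from \eqref{eq:dirichlet-energy} every summand vanishes, i.e. $f(\ul{e})=h_{e^{-1}}f(\ol{e})$ for every $e\in\E$ (all conductances being positive). The vector $f$ is thus \emph{parallel} along every edge with respect to the connection $h$. The first step is to exploit the well: by the definition of a graph with a well, the rim $\partial\V$ is non-empty and for every $x\in\partial\V$ there exists an edge $e$ with $\ul{e}=x$ and $\ol{e}\in\W$; since $f$ vanishes on $\W$, the parallelism relation yields $f(x)=h_{e^{-1}}f(\ol{e})=0$. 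Hence $f$ vanishes on $\partial\V$.

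The second step is to propagate this vanishing to all of $\V$ using the standing connectedness assumption on $\G_{|\V}$. For an arbitrary $y\in\V$, choose a discrete path $(y_{0}=y,e_{1},y_{1},\ldots,e_{n},y_{n})$ in $\G_{|\V}$ with $y_{n}\in\partial\V$. Applying the parallelism relation along each edge $e_{i}$ and using the unitarity of the maps $h_{e^{-1}}$, a straightforward downward induction on $i$ gives $f(y_{i})=0$ for every $i$, hence $f(y)=0$. Thus $f\equiv 0$, so $\ker\Delta_{h}=\{0\}$ and $\Delta_{h}$ is invertible.

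The main obstacle is purely conceptual rather than technical: one needs to check that the parallelism forced by $df=0$ can genuinely be propagated to the well. The non-commutative nature of the holonomy is not a real obstruction because the relation transports the \emph{zero} vector, which is fixed by every isomorphism $h_{e^{-1}}$; what truly matters is the existence of at least one edge from each connected component of $\V$ to $\W$, which is guaranteed by the combined assumptions of connectedness of $\G_{|\V}$ and non-emptiness of $\W$.
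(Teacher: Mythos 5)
Your proof is correct and follows essentially the same route as the paper: Hermiticity and non-negativity come from Lemma~\ref{adjoint} (the paper phrases this as the compression $\pi_{\V}\circ d^{*}\circ d\circ\pi_{\V}$ being Hermitian non-negative, which is what your direct computation shows), and invertibility is obtained by showing a section in the kernel has vanishing Dirichlet energy, hence is parallel along every edge, hence vanishes on the rim because it vanishes on the well, and then vanishes everywhere by connectedness of $\G_{|\V}$. The only cosmetic remark is that unitarity of the transports is not needed in the propagation step, as you yourself note at the end.
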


\begin{proof} It follows from Lemma \ref{adjoint} that $\Delta_{h}=\pi_{\V\setminus \W}\circ d^{*}\circ d \circ \pi_{\V\setminus \W}$ is Hermitian and non-negative on $\Omega^{0}(\V\setminus \W,\F)$. There remains to prove that it is invertible.

Consider a section $f$ such that $\Delta_{h}f=0$. Since $f$ vanishes identically on the well, \eqref{eq:dirichlet-energy} and the fact that $\mathcal{E}_{h}(f)=0$ imply that $f$ vanishes everywhere on $\V$.
\end{proof}

In the special case of the trivial bundle $\K_{\G}$ over $\G$ endowed with the canonical connection (see the end of Section \ref{sec:connections}), we will denote the Laplacian simply by $\Delta$. 

\subsection{The smallest eigenvalue of the Laplacian}

It follows from Lemma \ref{deltainv} that the lowest eigenvalue of $\Delta_{h}$ is positive. In this section, we give a lower bound on this first eigenvalue.
Let us define
\begin{equation}\label{eq:rayleigh-quotient}
\sigma_h=\min \mathrm{Spec} \,\Delta_h = \min_{f\in\Omega^0(\V\setminus \W,\F)\setminus \{0\}}\frac{(f,\Delta_h f)_{\Omega^0}}{(f,f)_{\Omega^0}}\,.
\end{equation}
On the trivial bundle $\K$ with the usual Laplacian $\Delta$, we denote this quantity by $\sigma$.

\begin{proposition}[Discrete Kato's lemma]\label{kato}
The following inequality holds: 
\begin{equation*}
\sigma_h \ge \sigma >0\,.
\end{equation*}
\end{proposition}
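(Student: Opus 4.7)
The plan is to establish a pointwise Kato-type inequality relating the covariant Dirichlet energy of a section $f \in \Omega^0(\V,\F)$ to the scalar Dirichlet energy of the norm function $|f|:x\mapsto \|f(x)\|_x$, and then conclude via the Rayleigh quotient characterisation \eqref{eq:rayleigh-quotient}.

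\textbf{Step 1: Pointwise reverse triangle inequality on each edge.} For any edge $e\in\E$, the map $h_{e^{-1}}:\F_{\ol e}\to\F_{\ul e}$ is unitary, so $\|h_{e^{-1}}f(\ol e)\|_{\ul e}=\|f(\ol e)\|_{\ol e}$. The reverse triangle inequality in the Hermitian space $\F_{\ul e}$ gives
\[
\bigl\|f(\ul e)-h_{e^{-1}}f(\ol e)\bigr\|_{\ul e}\;\ge\;\bigl|\,\|f(\ul e)\|_{\ul e}-\|f(\ol e)\|_{\ol e}\,\bigr|\;=\;\bigl||f|(\ul e)-|f|(\ol e)\bigr|.
\]

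\textbf{Step 2: Energy comparison.} Summing the squares of the inequality in Step 1 with weights $\frac{1}{2}\chi_e$ and using the explicit formula \eqref{eq:dirichlet-energy} for $\mathcal{E}_h(f)$, together with the analogous formula for the scalar Dirichlet energy $\mathcal{E}(|f|)$ obtained by taking $\F=\K_\G$ with the canonical connection, yields
\[
\mathcal{E}_h(f)\;\ge\;\mathcal{E}(|f|).
\]
Note that $|f|$ vanishes on $\W$ because $f$ does, so $|f|\in\Omega^0(\V,\K_\G)$ is an admissible test function for the scalar problem.

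\textbf{Step 3: Norms match and conclusion.} By definition of the Hermitian structure on $\Omega^0(\U,\F)$,
\[
(f,f)_{\Omega^0}=\sum_{x\in\U}\lambda_x\|f(x)\|_x^2=\sum_{x\in\U}\lambda_x |f|(x)^2=(|f|,|f|).
\]
Combined with Step 2 this gives, for every nonzero $f\in\Omega^0(\V,\F)$ (so that $|f|$ is also nonzero),
\[
\frac{(f,\Delta_h f)_{\Omega^0}}{(f,f)_{\Omega^0}}\;=\;\frac{\mathcal{E}_h(f)}{(f,f)_{\Omega^0}}\;\ge\;\frac{\mathcal{E}(|f|)}{(|f|,|f|)}\;\ge\;\sigma.
\]
Taking the infimum over nonzero $f$ in \eqref{eq:rayleigh-quotient} yields $\sigma_h\ge\sigma$. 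Finally, $\sigma>0$ follows from Proposition~\ref{deltainv} applied to the trivial bundle $\K_\G$ with the canonical connection, which ensures that $\Delta$ is invertible and hence has strictly positive spectrum.

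The only non-routine point is Step 1, and even there the essential content is simply the combination of unitarity of the parallel transport with the reverse triangle inequality; no further structural input is needed.
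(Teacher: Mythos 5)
Your proposal is correct and follows essentially the same route as the paper: the reverse triangle inequality combined with unitarity of the parallel transport gives the edgewise comparison, which via \eqref{eq:dirichlet-energy} yields $\mathcal{E}_h(f)\ge\mathcal{E}(\|f\|)$, and equality of the $\Omega^0$-norms plus the Rayleigh quotient \eqref{eq:rayleigh-quotient} concludes, with $\sigma>0$ coming from Proposition~\ref{deltainv} for the trivial bundle. The only addition is your (harmless and correct) remark that $\|f\|$ vanishes on the well and is thus an admissible test function for the scalar problem.
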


\begin{proof} The inequality $\sigma>0$ is a special case of Lemma \ref{deltainv}. Let us prove that $\sigma_{h}\geq \sigma$. For this, consider a section $f\in\Omega^0(\V\setminus \W,\F)$. For all $e\in \E$, we have
\begin{align*}
\| f(\ul{e})-h_{e^{-1}}f(\ol{e})\|_{\ul{e}} \geq \left| \| f(\ul{e})\|_{\ul{e}}-\| h_{e^{-1}}f(\ol{e})\|_{\ul{e}} \right|=\left|\| f(\ul{e})\|_{\ul{e}}-\| f(\ol{e})\|_{\ol{e}}\right|\, ,
\end{align*}
because $h_{e}$ is unitary. Using~\eqref{eq:dirichlet-energy}, this implies the inequality
\begin{equation*}
(f,\Delta_{h}f)_{\Omega^0}\ge (\|f\|, \Delta \|f\|)\, ,
\end{equation*}
where $\|f\|$ is the function on $\V$ defined by $\|f\|(x)=\|f(x)\|_{x}$. Since $(f,f)_{\Omega^{0}}=(\|f\|,\|f\|)$, the inequality $\sigma_{h}\geq \sigma$ follows. 
\end{proof}

\subsection{Generalised Laplacians}\label{sec:GLHS} We will consider linear operators on the space of sections of~$\F$ that are similar to, but more general than the Laplacian $\Delta_{h}$. In analogy with the case of a Hermitian vector bundle over a Riemannian manifold, for which there exists many second order differential operators with the same symbol, which all deserve to be called Laplacians, and which differ by a term of order $0$, we set the following definition.

Recall from Definition \ref{def:potentiel} that we call potential a section of the vector bundle $\End(\F)$ over~$\V$ such that $H_{x}$ is Hermitian (or, in the real case, symmetric) for each $x\in \V$. There is a natural inclusion $\Omega^{0}(\End(\F))\subset \End(\Omega^{0}(\F))$ thanks to which a potential can act on a section of $\F$. Concretely, if $H$ is a potential and $f$ a section of $\F$, then the section $Hf$ is simply defined by the fact that for all $x\in \V$,
\[(Hf)(x)=H_{x}(f(x)).\]

\begin{definition} Let $\F$ be a Hermitian vector bundle over a graph with a well $\G$, endowed with a connection $h$. Let $H$ be a potential on $\F$. The {\em generalised Laplacian} on $\F$ associated to $h$ and~$H$, or $(h,H)$-Laplacian, is the following linear endomorphism of $\Omega^{0}(\V\setminus\W,\F)$:
\[\Delta_{h,H}=\Delta_{h}+H.\]
\end{definition}

Imitating Definition \ref{Dirichlet energy}, we define the generalised Dirichlet energy of a section $f$ as the number
\begin{align}\label{generalised Dirichlet energy}
\mathcal{E}_{h,H}(f)&=\left(f,(\Delta_{h}+H)f\right)_{\Omega^{0}}\\
&=\frac{1}{2} \sum_{e\in \E}\chi_{e} \| f(\ul{e})-h_{e^{-1}}f(\ol{e})\|^{2}_{e}+\sum_{x\in \V}\lambda_{x} \langle f(x),H_{x}f(x)\rangle_{x}.\nonumber
\end{align}
Recall from the previous section that we denote by $\sigma_{h}$ the smallest eigenvalue of $\Delta_{h}$, and that $\sigma_{h}>0$. If $H_{x}>-\sigma_{h}$ for every vertex $x$, then the operator $\Delta_{h,H}$ is invertible. This is in particular the case as soon as $H_{x}\geq 0$ for every vertex $x$.

Let us conclude by describing the action of the gauge group on the operators that we just defined. The gauge group acts naturally on $\Omega^{0}(\F)$ : for all gauge transformation $j$ and all section~$f$, the section $j\cdot f$ is defined by
\[(j\cdot f)(x)=j_{x}(f(x)).\]
This action extends to an action by conjugation on $\End(\Omega^{0}(\F))$: if $B$ is an endomorphism of~$\Omega^{0}(\F)$ and $f$ is a section, then
\[(j\cdot B)(f)=j\cdot (B(j^{-1}\cdot f)).\]
These actions allow us to express the way in which the Laplacians are acted on by the gauge group: with our current notation,
\begin{equation}\label{eq:jDelta}
j\cdot \Delta_{h,H}=\Delta_{j\cdot h,j\cdot H}.
\end{equation}
Since the gauge group acts on each fibre by unitary transformations, it follows that the Dirichlet energy satisfies
\[\mathcal{E}_{j\cdot h, j\cdot H}(j\cdot f)=\mathcal{E}_{h,H}(f).\]

\section{A covariant Feynman--Kac formula}
\label{sec:covFK}

\subsection{A Feynman--Kac formula} In this section, we will prove our first theorem, which is a version of the Feynman--Kac formula featuring holonomies, and which will be one of the most useful results for our study. 

\begin{theorem}[Feynman--Kac formula]\label{prop:feynman-kac}
Let $\G$ be a weighted graph with a well. Let $\F$ be a vector bundle over $\G$, endowed with a connection $h$ and a potential $H$. Let $X$ be the random walk on $\G$ defined in Section \ref{sec:RW}.

Let $f$ be an element of $\Omega^{0}(\V\setminus \W,\F)$. For all $x\in \V\setminus \W$ and all $t\geq 0$, the following equality holds:
\[\left(e^{-t \Delta_{h,H}} f\right)(x)=\int_{\CP(\G)} \hol_{h,H}(\gamma_{|[0,t)}^{-1}) f(\gamma_{t})\; \d\P_{x}(\gamma).
\]
\end{theorem}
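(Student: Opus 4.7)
The plan is to set $u(t,x)$ equal to the right-hand side of the claimed identity and to show that $u$ and $t\mapsto e^{-t\Delta_{h,H}}f$ are both solutions of the same first-order linear Cauchy problem on the finite-dimensional space $\Omega^{0}(\V,\F)$. The identity then follows from uniqueness of solutions to such an ODE.

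First I would condition on the first holding time $\tau_{0}$ of $\gamma$, which under $\P_{x}$ is exponential of parameter $1$, and on the first edge $e$ traversed, which given $\tau_{0}$ is chosen with probability $P_{x,e}$ among the edges issued from $x$. The Markov structure built into the definition of $\P_{x}$ (via \eqref{defPx} and the subsequent product of independent exponentials) implies that, on the event that the first jump is along $e$, the shifted path $\gamma'$ started at $\ol{e}$ is distributed as $\P_{\ol{e}}$. Unfolding Definition~\ref{def:twisted-holo} along the reversed path, one checks that if $\tau_{0}\geq t$ then $\gamma_{|[0,t]}=((x,t))$ and $\hol_{h,H}(\gamma_{|[0,t]}^{-1})=e^{-tH_{x}}$, whereas if $\tau_{0}=s<t$ and the first jump is along $e$,
\begin{equation*}
\hol_{h,H}\!\left(\gamma_{|[0,t]}^{-1}\right) = e^{-sH_{x}}\circ h_{e^{-1}}\circ \hol_{h,H}\!\left((\gamma')_{|[0,t-s]}^{-1}\right).
\end{equation*}
This is the key algebraic step and the only place where the geometry of twisted holonomies really enters.

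Taking expectations, with the convention $u(t,y)=0$ for $y\in\W$ (coherent with $f_{|\W}=0$ and with the absorption of the walk at the well), this yields the integral equation
\begin{equation*}
u(t,x)=e^{-t(1+H_{x})}f(x)+\int_{0}^{t}e^{-s(1+H_{x})}\sum_{e\in\E,\,\ul{e}=x}P_{x,e}\,h_{e^{-1}}\,u(t-s,\ol{e})\,ds.
\end{equation*}
After the substitution $r=t-s$, differentiation in $t$ gives
\begin{equation*}
\partial_{t}u(t,x)=-(1+H_{x})\,u(t,x)+\sum_{e\in\E,\,\ul{e}=x}P_{x,e}\,h_{e^{-1}}\,u(t,\ol{e}),
\end{equation*}
which by the expression \eqref{I-Ph} for $\Delta_{h}$ equals $-(\Delta_{h,H}u)(t,x)$. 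Since $u(0,\cdot)=f$, both $u(t,\cdot)$ and $e^{-t\Delta_{h,H}}f$ satisfy the same linear ODE on $\Omega^{0}(\V,\F)$ with the same initial condition, hence coincide.

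The main obstacle is precisely the factorisation of $\hol_{h,H}$ along the \emph{reversed} path. Twisted holonomy is not invariant under time reversal (cf.~\eqref{inverse-adjoint}), so it is essential that the statement features $\gamma_{|[0,t]}^{-1}$ rather than $\gamma_{|[0,t]}$: only then does the first-jump decomposition pull out the factor $e^{-sH_{x}}\circ h_{e^{-1}}$ on the \emph{left}, which upon differentiation in $t$ reproduces precisely the $x$-value of $-(1+H_{x})u+\sum P_{x,e}h_{e^{-1}}u(\cdot,\ol{e})$ appearing in \eqref{I-Ph}. Once this factorisation is in place, everything else is bookkeeping with Fubini and elementary ODE uniqueness.
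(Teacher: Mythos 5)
Your argument is correct, but it is not the route the paper takes. You characterise the right-hand side as the solution of the backward Kolmogorov equation: a first-jump (renewal) decomposition under $\P_{x}$, using the factorisation $\hol_{h,H}(\gamma_{|[0,t]}^{-1})=e^{-sH_{x}}\circ h_{e^{-1}}\circ\hol_{h,H}((\gamma')_{|[0,t-s]}^{-1})$ — which is indeed the crux, and which you correctly tie to the fact that reversal places the factor $e^{-\tau_{0}H_{x}}$ on the left — followed by differentiation of the renewal equation and ODE uniqueness, with \eqref{I-Ph} identifying the generator as $-\Delta_{h,H}$. The paper instead proceeds by a purely algebraic expansion: it writes $e^{-t\Delta_{h,H}}=e^{-t}e^{-tH+t(\Id-\Delta_{h})}$, expands via a Duhamel-type series in the number of applications of $\Id-\Delta_{h}$, uses \eqref{I-Ph} to turn each term into a sum over $k$-step discrete paths weighted by $P_{x,e_{1}}\cdots P_{\ol{e_{k-1}},e_{k}}$ and twisted holonomy factors, and matches this term by term with the expansion of the path expectation obtained directly from \eqref{defQx} and \eqref{defPx}, the tail of the walk after time $t$ contributing the total mass $1$ of $\Q_{\ol{e_{k}}}$. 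The trade-off: the paper's computation is entirely self-contained, needing no probabilistic input beyond the explicit formulas (in particular no Markov property of the continuous-time walk, which is never formally stated in the paper); your proof is more conceptual and shorter on bookkeeping, but it leans on the strong Markov property at the first jump time under $\P_{x}$ and on a small regularity bootstrap (continuity, then differentiability of $u$ from the renewal equation) that you would need to spell out from \eqref{defQx}--\eqref{defPx} to make the argument complete in this framework. One further cosmetic point: for edges $e$ from $x$ into the well, $e^{-1}$ and hence $h_{e^{-1}}$ are not defined, so the corresponding terms in your renewal equation should be understood as absent (they vanish anyway under your convention $u(t,\cdot)\equiv 0$ on $\W$), exactly as in the paper's own reading of \eqref{I-Ph}.
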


Note that since $f$ vanishes identically on the well, the paths that reach the well before time $t$ do not contribute to the right-hand side.

\begin{proof} We will use the fact that for any two endomorphisms $A$ and $B$ of a finite dimensional vector space and all real $t>0$,
\[e^{t(A+B)}=\sum_{k=0}^{\infty}\int_{0<t_{0}<\ldots<t_{k-1}<t} e^{t_{0}A}Be^{(t_{1}-t_{0})A}\ldots B e^{(t-t_{k-1})A}\; \d t_{0}\ldots \d t_{k-1},\]
a formula that can be checked by expanding the exponentials on the right-hand side in power series, computing the Eulerian integrals that appear, and observing that one recovers the left-hand side. In our context, we use this formula in the following way: we first write $e^{-t\Delta_{h,H}}=e^{-t}e^{-tH+t(\Id-\Delta_{h})}$ and deduce that
\[e^{-t\Delta_{h,H}}=e^{-t}\sum_{k=0}^{\infty} \int_{0<t_{0}<\ldots<t_{k-1}<t} e^{-t_{0}H} (\Id-\Delta_{h})e^{-(t_{1}-t_{0})H}\ldots (\Id-\Delta_{h}) e^{-(t-t_{k-1})H} \; \d t_{0}\ldots \d t_{k-1}.\]
Let $f$ be a section of $\F$ and $x$ a vertex of $\G$. Using the expression \eqref{I-Ph} of $\Delta_{h}$, we find
\begin{align}\nonumber
(e^{-t\Delta_{h,H}}f)(x)&=e^{-t}\sum_{k=0}^{\infty} \sum_{e_{1},\ldots,e_{k}\in \E}
P_{x,e_{1}}P_{\ol{e_{1}},e_{2}}\ldots P_{\ol{e_{k-1}},e_{k}} \prod_{q=1}^{k} \1_{\V\setminus\W}(\ol{e_{q}})\\ 
& \hspace{-.5cm} \int_{0<t_{0}<\ldots<t_{k-1}<t}  e^{-t_{0}H_{x}}h_{e_{1}^{-1}} e^{-(t_{1}-t_{0})H_{\ol{e_{1}}}}\ldots h_{e_{k}^{-1}} e^{-(t-t_{k-1})H_{\ol{e_{k}}}}f(\ol{e_{k}})\; \d t_{0}\ldots \d t_{k-1}.\label{exptdelta1}
\end{align}
On the other hand, according to \eqref{defQx} and \eqref{defPx},
\begin{align*}
\int_{\CP(\G)} \hol_{h,H}(\gamma_{|[0,t)}^{-1}) f(\gamma_{t})\; \d\P_{x}(\gamma)
&= 
  \sum_{\substack{0\leq k < n \\ p\in \DP_{n}(\G)\\ p=(x_{0},e_{1},\ldots,e_{n},x_{n})}}  \Q_{x}(\{p\})  \\
& \hspace{-4.5cm}  \int_{0<t_{0}<\ldots<t_{k-1}<t<t_{k}<\ldots < t_{n-1}} \hspace{-1cm}  e^{-t_{0}H_{x}}h_{e_{1}^{-1}} e^{-(t_{1}-t_{0})H_{x_{1}}}\ldots h_{e_{k}^{-1}} e^{-(t-t_{k-1})H_{x_{k}}}f(x_{k})e^{-t_{n-1}} \; \d t_{0}\ldots \d t_{n-1}\\
&\hspace{-4.5cm}= e^{-t}  \sum_{k=0}^{\infty}\sum_{e_{1},\ldots,e_{k}\in \E }  P_{x,e_{1}}P_{\ol{e_{1}},e_{2}}\ldots P_{\ol{e_{k-1}},e_{k}}\prod_{q=1}^{k} \1_{\V\setminus\W}(\ol{e_{q}}) \\
&\hspace{-3.5cm} \Bigg( \sum_{l=0}^{\infty} \sum_{e'_{1},\ldots,e'_{l}\in \E} P_{\ol{e_{k}},e'_{1}}P_{\ol{e'_{1}},e'_{2}}\ldots P_{\ol{e'_{l-1}},e'_{l}}  \1_{\W}(\ol{e'_{l}})\prod_{r=1}^{l} \1_{\V\setminus\W}(\ol{e'_{r}}) \Bigg)\\
& \hspace{-2.5cm}  \int_{0<t_{0}<\ldots<t_{k-1}<t} \hspace{0cm}  e^{-t_{0}H_{x}}h_{e_{1}^{-1}} e^{-(t_{1}-t_{0})H_{\ol{e_{1}}}}\ldots h_{e_{k}^{-1}} e^{-(t-t_{k-1})H_{\ol{e_{k}}}}f(\ol{e_{k}}) \; \d t_{0}\ldots \d t_{k-1}.
\end{align*}
The sum over $l$ between the brackets is the total mass of the probability measure $\Q_{\ol{e_{k}}}$, that is,~$1$, and we recover the right-hand side of \eqref{exptdelta1}.
\end{proof}

We will often use a slightly different, but equivalent, version of the Feynman--Kac formula. In order to state it, let us introduce the following notation. The vector space of linear endomorphisms of $\Omega^{0}(\V\setminus \W,\F)$ is isomorphic to
\[\End(\Omega^{0}(\V\setminus \W,\F))=\End\Big(\bigoplus_{x\in \V\setminus \W}\F_{x}\Big)=\bigoplus_{x,y\in \V\setminus \W}\Hom(\F_{y},\F_{x}).\]
This decomposition is nothing more than the block decomposition of the matrix representing an endomorphism of $\Omega^{0}(\V\setminus\W,\F)$, each block corresponding to one of the fibres of the bundle. 
For all linear operator $B\in \End(\Omega^{0}(\V\setminus \W,\F))$ and all $x,y\in \V$, we will denote by $B_{x,y}$ the linear map from $\F_{y}$ to $\F_{x}$ which appears in the right-hand side of the decomposition above.

Proposition \ref{prop:feynman-kac} immediately implies the following.

\begin{corollary}\label{FKxy} Under the assumptions of Theorem \ref{prop:feynman-kac}, and for all $x,y\in \V\setminus\W$, the following equality holds in $\Hom(\F_{y},\F_{x})$:
\[\left(e^{-t \Delta_{h,H}}\right)_{x,y}=\int_{\CP(\G)} \hol_{h,H}(\gamma_{|[0,t)}^{-1}) \1_{\{\gamma_{t}=y\}}\; \d\P_{x}(\gamma).\]
\end{corollary}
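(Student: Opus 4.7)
The plan is to derive the corollary from Theorem~\ref{prop:feynman-kac} by a standard ``plug-in test sections'' argument: applying the main Feynman--Kac formula to sections concentrated at the single vertex $y$ will read off the $(x,y)$-block of the operator $e^{-t\Delta_{h,H}}$.

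More precisely, I would fix $y\in\V$ and an arbitrary vector $v\in\F_{y}$, and define $f^{y,v}\in\Omega^{0}(\V,\F)$ by $f^{y,v}(y)=v$ and $f^{y,v}(z)=0$ for $z\neq y$. By the block decomposition
\[
\End(\Omega^{0}(\V,\F))=\bigoplus_{x',y'\in\V}\Hom(\F_{y'},\F_{x'}),
\]
the value $(e^{-t\Delta_{h,H}}f^{y,v})(x)\in\F_x$ is exactly $(e^{-t\Delta_{h,H}})_{x,y}(v)$. On the other hand, for any continuous path $\gamma$, the expression $f^{y,v}(\gamma_{t})$ equals $v$ on the event $\{\gamma_{t}=y\}$ and vanishes otherwise; in particular the indicator $\1_{\{\gamma_{t}\in\V\}}$ can be absorbed into the stronger indicator $\1_{\{\gamma_{t}=y\}}$. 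Theorem~\ref{prop:feynman-kac} applied to $f^{y,v}$ therefore reads
\[
(e^{-t\Delta_{h,H}})_{x,y}(v)=\int_{\CP(\G)} \hol_{h,H}(\gamma_{|[0,t]}^{-1})\,v\, \1_{\{\tau(\gamma)>t,\,\gamma_{t}=y\}}\, d\P_{x}(\gamma).
\]

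By linearity of $\hol_{h,H}(\gamma_{|[0,t]}^{-1})$ as an element of $\Hom(\F_{y},\F_{x})$ (on the event $\gamma_{t}=y$), the right-hand side is precisely the evaluation at $v$ of the operator
\[
\int_{\CP(\G)} \hol_{h,H}(\gamma_{|[0,t]}^{-1})\,\1_{\{\tau(\gamma)>t,\,\gamma_{t}=y\}}\, d\P_{x}(\gamma)\in\Hom(\F_{y},\F_{x}).
\]
Since $v\in\F_{y}$ is arbitrary, the claimed identity in $\Hom(\F_{y},\F_{x})$ follows.

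There is essentially no obstacle: the only subtle point is the measurability and integrability of the $\Hom(\F_{y},\F_{x})$-valued integrand, but since $\hol_{h,H}$ is composed of unitary holonomies and contractive exponentials $e^{-\tau_k H_{x_k}}$ (noting $H$ is Hermitian but the relevant $\gamma$ have $\gamma_{t}\in\V$ so finiteness of all holding times up to time $t$ is automatic on $\{\tau(\gamma)>t\}$), the operator norm is bounded uniformly in $\gamma$ by $e^{t\|H\|_{\infty}}$, which makes the vector-valued Bochner integral well defined. The identity can also be read componentwise by pairing with an arbitrary $w\in\F_{x}$ using the Hermitian form on $\F_x$, reducing the statement to the scalar Feynman--Kac identity already proved.
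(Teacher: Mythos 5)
Your proof is correct and follows exactly the route the paper intends: the paper simply states that the corollary follows immediately from Theorem \ref{prop:feynman-kac}, and the implicit argument is precisely your plug-in of the section supported at $y$ with value $v$, so that $f^{y,v}(\gamma_t)=v\1_{\{\gamma_t=y\}}$ and the $(x,y)$-block is read off by letting $v$ range over $\F_y$. No gap; your remark on boundedness of the integrand (operator norm at most $e^{t\|H\|_\infty}$ on $\{\tau(\gamma)>t\}$) is a harmless extra precaution.
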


Provided we are careful enough about the source and target spaces of the linear operators that we are writing, we can express the corollary in the following more compact form:
\[e^{-t \Delta_{h,H}}=\bigoplus_{x\in \V} \left(e^{-t \Delta_{h,H}}\right)_{x,y}=\int_{\CP(\G)} \hol_{h,H}(\gamma_{|[0,t)}^{-1}) 
\; \d\P_{x}(\gamma).\]

\subsection{The Green section}\label{sec:Green} We devote this very short section to the definition of the analogue, in our situation, of the Green function. 
For this, let us define the operator $\Lambda$ on $\Omega^{0}(\F)$ such that for all section $f$ and all vertex $x\in \V$, 
\[(\Lambda f)(x)=\lambda_{x}f(x).\]

\begin{definition} Let $\G$ be a weighted graph with a well. Let $\F$ be a fibre bundle over $\G$. Let $h$ be a connection on $\F$ and $H$ be a potential on $\F$. The \new{Green section} associated with $h$ and $H$ is the operator
\[G_{h,H}= \left(\Lambda\circ \Delta_{h,H}\right)^{-1} \in \End(\Omega^{0}(\V\setminus \W,\F)).\]
\end{definition}

According to \eqref{eq:jDelta}, given a gauge transformation $j$ of $\F$, we have
\begin{equation}\label{eq:Green-equivariance}
j\cdot G_{h,H}=j\circ G_{h,H}\circ j^{-1}=G_{j\cdot h,j\cdot H}.
\end{equation}

Let us make a brief comment about the distinction between sections and measures. The operator $\Lambda$ takes a section and multiplies it by a measure. The result of this operation is an object that can be paired pointwise with a section, using the Hermitian scalar product of each fibre, thus producing a scalar function that can be integrated over the graph. It would thus be fair to say that $\Lambda$ sends $\Omega^{0}(\F)$ into its dual space, and, since $\Delta_{h,H}$ is a genuine operator on $\Omega^{0}(\V\setminus\W,\F)$, that $G_{h,H}$ sends the dual of $\Omega^{0}(\V\setminus \W,\F)$ into $\Omega^{0}(\V\setminus \W,\F)$ itself. In a sense that we will not make very precise, the kernel of $G_{h,H}$ is thus analogous to a function of two variables.

\subsection{Two elementary algebraic identities} The Feynman-Kac formula relates the semigroup of the generalised Laplacian to the average of the twisted holonomy along the paths of the random walk in the graph. In this paragraph, we will derive two consequences of this formula which will be the bases of our proofs of the isomorphism theorems. The two formulas which we will now prove ultimately rely on the following elementary lemma. 

\begin{lemma}
Let $A$ and $B$ be two positive Hermitian matrices of the same size. Then 
\begin{equation}\label{alg1}
\int_{0}^{\infty} e^{-tA}\; \d t=A^{-1}
\end{equation}
and
\begin{equation}\label{alg2}
\int_0^\infty \frac{e^{-t A}-e^{-t B}}{t} \;\d t = \log B- \log A\,.
\end{equation}
\end{lemma}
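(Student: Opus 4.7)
The plan is to prove both identities by reducing them to scalar computations via the spectral theorem, which is available since $A$ and $B$ are positive Hermitian and act on a finite-dimensional space.

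For \eqref{alg1}, I would write the spectral decomposition $A=\sum_i \lambda_i P_i$, where the $\lambda_i>0$ are the eigenvalues and the $P_i$ are the orthogonal spectral projectors. Then $e^{-tA}=\sum_i e^{-t\lambda_i}P_i$, and the positivity of each $\lambda_i$ guarantees that $\int_0^\infty e^{-t\lambda_i}\,dt=\lambda_i^{-1}<\infty$, so one may interchange the sum and the integral and conclude
\[
\int_0^\infty e^{-tA}\,dt=\sum_i\lambda_i^{-1}P_i=A^{-1}.
\]

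For \eqref{alg2}, the key intermediate step is the scalar Frullani-type identity
\[
\log\lambda=\int_0^\infty\frac{e^{-t}-e^{-t\lambda}}{t}\,dt\qquad(\lambda>0).
\]
I would establish this by writing $\frac{e^{-t}-e^{-t\lambda}}{t}=\int_1^\lambda e^{-ts}\,ds$ (with the usual sign convention when $\lambda<1$) and applying Fubini's theorem:
\[
\int_0^\infty\int_1^\lambda e^{-ts}\,ds\,dt=\int_1^\lambda\int_0^\infty e^{-ts}\,dt\,ds=\int_1^\lambda\frac{ds}{s}=\log\lambda.
\]
Applying this to each eigenvalue of $A$ through the spectral decomposition yields the operator identity
\[
\log A=\int_0^\infty\frac{e^{-t}I-e^{-tA}}{t}\,dt,
\]
and similarly for $B$. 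Subtracting these two identities makes the singular $e^{-t}I/t$ piece cancel and gives \eqref{alg2}.

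The only technical point to check is the convergence of the integral in \eqref{alg2}, which is not entirely trivial because the two terms separately are non-integrable near $t=0$. Near $0$, a first-order Taylor expansion gives $e^{-tA}-e^{-tB}=t(B-A)+O(t^2)$, so the integrand extends continuously to $t=0$ with value $B-A$; near $\infty$, both $e^{-tA}$ and $e^{-tB}$ decay exponentially because $A$ and $B$ are positive definite, so the integral converges absolutely on $[1,\infty)$. This is the only genuine obstacle, and once handled the spectral reduction makes the identification with $\log B-\log A$ immediate.
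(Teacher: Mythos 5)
Your proof is correct and follows essentially the same route as the paper: both reduce the matrix identities to the scalar Frullani integral $\int_0^\infty \frac{e^{-ta}-e^{-t}}{t}\,dt=-\log a$ via the spectral decomposition and then subtract the two identities for $A$ and $B$. The only (inessential) difference is that you verify the scalar identity by Fubini, whereas the paper does it by truncating at $\epsilon$ and changing variables; your write-up is in fact slightly more complete, since the paper leaves \eqref{alg1} and the spectral reduction implicit.
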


\begin{proof}
It suffices to prove that for all positive real $a$,
\[\int_0^\infty \frac{e^{-t a}-e^{-t}}{t} \;\d t = - \log a\,.\]
Now, for all $\epsilon>0$, $\int_{\epsilon}^{\infty} \frac{e^{-ta}}{t}\; \d t=\int_{a\epsilon}^{\infty} \frac{e^{-t}}{t}\; \d t$ and the integral that we want to compute is equal to $\int_{a\epsilon}^{\epsilon} \frac{e^{-t}}{t}\; \d t=\int_{a\epsilon}^{\epsilon} \frac{1}{t}\; \d t+O(\epsilon)$, from which the result follows immediately.
\end{proof}

From the Feynman-Kac formula (Theorem \ref{prop:feynman-kac}), the definition of the measure $\nu$ (Definition \ref{defnu}), and \eqref{alg1}, we deduce the following proposition.

\begin{proposition}\label{propholdeltanu} For all $x,y\in \V\setminus\W$ the equality
\begin{equation}\label{masterDE1}
\int_{\CP(\G)} \hol_{h,H}(\gamma^{-1}) \; \d\nu_{x,y}(\gamma) = \big(G_{h,H}\big)_{x,y}
\end{equation}
holds in $\Hom(\F_{y},\F_{x})$. Moreover, we have the following equality in $\End(\Omega^{0}(\V\setminus\W,\F))$:
\begin{equation}\label{masterDE}
\int_{\CP(\G)} \hol_{h,H}(\gamma^{-1}) \; \d\nu(\gamma) = \Delta_{h,H}^{-1}.
\end{equation}
\end{proposition}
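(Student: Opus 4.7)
Both identities follow from combining Corollary \ref{FKxy} (the matrix-coefficient form of the Feynman--Kac formula) with the definition of $\nu_{x,y}$ and the elementary identity \eqref{alg1}. The underlying idea is that $\nu_{x,y}$ is a time-integrated version of $\P_x$, so its integral against a twisted holonomy converts the semigroup $e^{-t\Delta_{h,H}}$ into its Laplace transform at the origin, namely $\Delta_{h,H}^{-1}$.

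First I would unfold the left-hand side of \eqref{masterDE1} using Definition \ref{defnu}. Applying it to the test function $F(\gamma)=\hol_{h,H}(\gamma^{-1})$, whose evaluation on $\gamma_{|[0,t]}$ is $\hol_{h,H}(\gamma_{|[0,t]}^{-1})$, gives
\[
\int_{\CP(\G)} \hol_{h,H}(\gamma^{-1})\, d\nu_{x,y}(\gamma) = \frac{1}{\lambda_y}\int_0^\infty \!\! \left( \int_{\CP(\G)} \hol_{h,H}(\gamma_{|[0,t]}^{-1})\, \1_{\{\gamma_t=y\}}\, d\P_x(\gamma) \right) dt.
\]
Since $\P_x$-almost every continuous path has infinite lifetime, the indicator $\1_{\{\tau(\gamma)>t\}}$ is automatic, and Corollary \ref{FKxy} identifies the inner integral with $(e^{-t\Delta_{h,H}})_{x,y}\in \Hom(\F_y,\F_x)$.

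Next I would apply \eqref{alg1} block-wise. Because extraction of the $(x,y)$-block is a bounded linear operation, it commutes with the integral over $t$, yielding
\[
\int_0^\infty (e^{-t\Delta_{h,H}})_{x,y}\, dt = (\Delta_{h,H}^{-1})_{x,y}.
\]
Combined with the factor $\lambda_y^{-1}$ pulled out in the previous step, this produces the $(x,y)$-block of $\Delta_{h,H}^{-1}\circ \Lambda^{-1}=G_{h,H}$, since $\Lambda$ is diagonal with block $\lambda_x\,\Id_{\F_x}$ at position $(x,x)$. That proves \eqref{masterDE1}.

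The global identity \eqref{masterDE} then follows by summing \eqref{masterDE1} with the weights prescribed by $\nu_x=\sum_y \lambda_y\nu_{x,y}$ and $\nu=\sum_x \nu_x$: the $\lambda_y$ cancels the $\lambda_y^{-1}$, and reassembling the blocks yields $\Delta_{h,H}^{-1}\in \End(\Omega^0(\V,\F))$. I do not expect any serious obstacle; the only point requiring care is the convergence of $\int_0^\infty e^{-t\Delta_{h,H}}\,dt$, which holds whenever $\Delta_{h,H}$ is positive definite---the regime in which $G_{h,H}$ is naturally defined, in particular whenever $H$ is pointwise non-negative.
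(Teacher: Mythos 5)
Your proposal is correct and follows essentially the same route as the paper, which deduces the proposition in one line from the Feynman--Kac formula (via Corollary \ref{FKxy}), the definition of $\nu_{x,y}$, and the identity \eqref{alg1}; your accounting of the factor $\lambda_y^{-1}$ against $G_{h,H}=(\Lambda\circ\Delta_{h,H})^{-1}$ and the reassembly of blocks for \eqref{masterDE} is exactly what is intended. The remark that $\1_{\{\tau(\gamma)>t\}}$ is automatic under $\P_x$ and the caveat on positivity of $\Delta_{h,H}$ are both consistent with the paper's setup.
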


Similarly, from Theorem \ref{prop:feynman-kac}, Definition \ref{defmu} and \eqref{alg2}, we deduce the following. 

\begin{proposition}\label{prop:hol-delta}
Let $h,h'$ be two connections on $\F$ and $H,H'$ be two non-negative potentials. We have the following identity in $\End(\Omega^0(\V\setminus\W,\F))$:
\begin{equation}\label{masterLJS}
\int_{\CP(\G)}\left(\hol_{h,H}(\gamma^{-1})-\hol_{h',H'}(\gamma^{-1})\right)\; \d\mu(\gamma)=\log \Delta_{h',H'}-\log\Delta_{h,H}\,. 
\end{equation}
\end{proposition}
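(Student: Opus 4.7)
The plan is to combine the covariant Feynman--Kac formula (in the form of Corollary~\ref{FKxy}) with the analytic identity \eqref{alg2}, mirroring the structure of Proposition~\ref{propholdeltanu} but using the measure $\mu$ in place of $\nu$. The $1/t$ weight built into $\mu$ (Definition~\ref{defmu}) is precisely what matches the left-hand side of \eqref{alg2}, and the subtraction in the statement is exactly what compensates for the logarithmic divergence of $\int_0^\infty e^{-tA}dt/t$ at $t=0$.

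Concretely, I would unfold $\mu=\sum_{x,y\in\V}\lambda_y\mu_{x,y}$ using Definition~\ref{defmu}. For fixed $x,y\in\V$ and test integrand $F(\gamma)=\hol_{h,H}(\gamma^{-1})-\hol_{h',H'}(\gamma^{-1})$, Definition~\ref{defmu} rewrites $\lambda_y\int F\,d\mu_{x,y}$ as an iterated integral in $(t,\gamma)$ against $d\P_x(\gamma)\,dt/t$ with indicator $\1_{\gamma_t=y}$; Fubini (justified below) and Corollary~\ref{FKxy} then collapse the inner $\P_x$-integral of each summand, giving
\begin{equation*}
\lambda_y\int F\,d\mu_{x,y}=\int_0^\infty \frac{(e^{-t\Delta_{h,H}})_{x,y}-(e^{-t\Delta_{h',H'}})_{x,y}}{t}\,dt.
\end{equation*}
Summing over $x,y\in\V$ yields, as an identity in $\End(\Omega^0(\V,\F))$,
\begin{equation*}
\int_{\CP(\G)}\bigl(\hol_{h,H}(\gamma^{-1})-\hol_{h',H'}(\gamma^{-1})\bigr)\,d\mu(\gamma)=\int_0^\infty \frac{e^{-t\Delta_{h,H}}-e^{-t\Delta_{h',H'}}}{t}\,dt.
\end{equation*}
At this point \eqref{alg2}, applied via the functional calculus to the positive Hermitian operators $\Delta_{h,H}$ and $\Delta_{h',H'}$ (they are Hermitian by construction in Section~\ref{sec:GLHS}, and positive definite since $H,H'\ge 0$ and $\Delta_h,\Delta_{h'}$ are positive by Proposition~\ref{deltainv}), concludes the proof.

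The only delicate point is the justification of Fubini and of the individual rewriting of the $(x,y)$-block integrals, since each term on its own diverges at $t=0$ when $x=y$ (this is exactly the infinite mass that $\mup$ carries on constant loops). The resolution is to keep the difference together throughout: $e^{-tA}-e^{-tB}=t(B-A)+O(t^2)$ near $t=0$, so the integrand in $t$ is bounded near zero; the positivity of both generalised Laplacians gives exponential decay at infinity; and on the path side, the twisted-holonomy operators are uniformly bounded in operator norm (they are contractions when $H,H'\ge 0$ since the connection is unitary and $e^{-\tau H_x}$ has norm $\le 1$), so the analogous telescoping $\hol_{h,H}-\hol_{h',H'}$ is controlled pointwise by an integrable function of $\gamma_{|[0,t]}$ under $d\P_x(\gamma)\,dt/t$. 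With this domination in hand, Fubini and the limit arguments legitimising the exchange of $\sum_{x,y}$, $\int_0^\infty\!\!dt/t$ and $\int d\P_x$ go through without further fuss.
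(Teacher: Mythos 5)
Your proposal is correct and follows essentially the same route as the paper, which deduces the identity directly from the Feynman--Kac formula (Theorem~\ref{prop:feynman-kac}), Definition~\ref{defmu} of $\mu$, and the scalar identity \eqref{alg2}. In fact you supply more detail than the paper does, in particular the observation that the difference of twisted holonomies must be kept together to absorb the infinite mass of $\mup$ on constant loops, where the $O(t)$ bound $\|e^{-tH_x}-e^{-tH'_x}\|$ and the contraction property of twisted holonomies for non-negative potentials justify the Fubini step.
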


It turns out that \eqref{masterLJS} is difficult to use in its present form, and that it is often more convenient to use the less detailed but still informative equation obtained from it by taking the trace of both sides. Here, by the trace, we mean the usual trace in the space of endomorphisms of $\Omega^{0}(\V\setminus\W,\F)$. Thus, if $B$ is such an endomorphism, we call trace of $B$ the number
\[\Tr(B)=\sum_{x\in \V\setminus\W} \Tr_{\F_{x}}(B_{x,x}).\]

\begin{corollary}\label{coro:log-det-2}
Let $h$ be a connection and $H$ a non-negative potential. We have the identity
\begin{equation}\label{masterLJStr}
\int_{\CL(\G)}\left(\Tr\,\hol_{h,H}(\gamma^{-1})-\Tr\,\hol_{h}(\gamma^{-1})\right)\; \d\mupl(\gamma)=\log\frac{\det\Delta_{h}}{\det\Delta_{h,H}}\,. 
\end{equation}
\end{corollary}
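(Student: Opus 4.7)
My plan is to derive \eqref{masterLJStr} by tracing the operator identity \eqref{masterLJS} of Proposition \ref{prop:hol-delta}, specialising its second pair of data to $(h,0)$, and then analysing how the block structure of the trace selects only the loop part of $\mu$.

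First I would apply \eqref{masterLJS} with $h'=h$ and $H'=0$. From Definition \ref{def:twisted-holo}, when the potential vanishes every exponential factor becomes the identity, so $\hol_{h,0}(\gamma^{-1})=\hol_{h}(\gamma^{-1})$. The identity then reads
\[
\int_{\CP(\G)}\bigl(\hol_{h,H}(\gamma^{-1})-\hol_{h}(\gamma^{-1})\bigr)\,d\mu(\gamma)=\log\Delta_{h}-\log\Delta_{h,H}
\]
in $\End(\Omega^{0}(\V,\F))$. Both Laplacians are positive Hermitian (by Proposition \ref{deltainv} together with $H\ge 0$), so the identity $\Tr\log=\log\det$ applies unambiguously to the right-hand side.

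Next I would take the trace $\Tr=\sum_{x\in\V}\Tr_{\F_{x}}\bigl((\cdot)_{x,x}\bigr)$ of both sides. The right-hand side becomes $\log\det\Delta_{h}-\log\det\Delta_{h,H}$, which matches the target. For the left-hand side I would invoke the block decomposition $\End(\Omega^{0}(\V,\F))=\bigoplus_{x,y\in\V}\Hom(\F_{y},\F_{x})$ recalled before Corollary \ref{FKxy}, together with the splitting $\mu=\mupl+\tilde\mu$ from Section \ref{sec:paths}. A path contributing to $\tilde\mu$ joins some $x$ to a distinct vertex $y$, and its twisted holonomy $\hol_{h,H}(\gamma^{-1})\in\Hom(\F_{y},\F_{x})$ lives in an off-diagonal block, which contributes nothing to $\Tr$. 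Hence the $\tilde\mu$-integral drops out, and only the loop-based measure $\mupl$ survives, producing $\int_{\CL(\G)}\bigl(\Tr\,\hol_{h,H}(\gamma^{-1})-\Tr\,\hol_{h}(\gamma^{-1})\bigr)\,d\mupl(\gamma)$ on the left.

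The main subtlety is justifying the exchange of $\Tr$ and the integral, most delicately on the infinite component $\mup$ of constant loops. On a constant loop $(x,t)$ the bracket equals $\Tr_{\F_{x}}(e^{-tH_{x}})-r=-t\,\Tr(H_{x})+O(t^{2})$ as $t\to 0$ and is uniformly bounded by $r$ for all $t>0$; weighted against $e^{-t}\,dt/t$ this is absolutely integrable near both ends. On $\mul+\tilde\mu$, which is finite by Section \ref{sec:paths}, the two terms $\hol_{h,H}(\gamma^{-1})$ and $\hol_{h}(\gamma^{-1})$ are each of operator norm at most $1$ on fibres of dimension $r$, so the integrand is bounded by $2r$ and hence integrable. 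Fubini therefore legitimises the swap and yields \eqref{masterLJStr}.
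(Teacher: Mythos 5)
Your proposal is correct and follows essentially the same route as the paper, which obtains \eqref{masterLJStr} precisely by taking the trace of \eqref{masterLJS} specialised to $h'=h$, $H'=0$, the trace killing the off-diagonal $\tilde\mu$-contributions so that only $\mupl$ survives. Your added care about integrability on the constant-loop part $\mup$ and the use of $\Tr\log=\log\det$ for the positive Hermitian operators $\Delta_h$ and $\Delta_{h,H}$ is sound and consistent with the paper's remarks.
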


The reader may wonder why we wrote Proposition~\ref{prop:hol-delta} as we did with a difference of holonomies, and what the integral of the twisted holonomy along a random path with respect to the measure~$\mu$ is. The answer is that this integral is ill defined, because of the presence of the infinite measure~$\mup$ within $\mu$. However, it makes sense to compute this integral against $\mu-\mup$. The result is given by the next proposition.

\begin{proposition}
Let $h$ be a connection and $H$ a positive potential. In $\End(\Omega^0(\V\setminus\W,\F))$, we have the identity
\begin{equation}\label{iplush}
\int_{\CP(\G)}\hol_{h,H}(\gamma^{-1})\; \d(\mu-\mup)(\gamma)=-\log \Delta_{h,H} + \log(\Id+H)\,. 
\end{equation}
\end{proposition}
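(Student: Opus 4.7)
The strategy is to introduce a time cutoff $\{\tau(\gamma)>\epsilon\}$ on the loops, compute the resulting integrals against $\mu$ and $\mup$ separately using the Feynman--Kac formula (Corollary~\ref{FKxy}) and the explicit form of $\mup$, and then let $\epsilon\to 0^+$ invoking the elementary identity~\eqref{alg2}. The reason for the cutoff is that each of $\mu$ and $\mup$ is individually infinite: the factor $\frac{dt}{t}$ in their definitions produces a divergence at $t=0$ which cancels only in the difference.

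First I would use the fact, announced just after the definition of $\mup$, that $\mu-\mup=\mul+\tilde\mu$ is a finite signed measure on $\CP(\G)$. Since the connection $h$ is unitary and the potential $H$ is (in particular) non-negative, $\|\hol_{h,H}(\gamma^{-1})\|\leq 1$ uniformly in $\gamma$, so dominated convergence yields
\[\int_{\CP(\G)}\hol_{h,H}(\gamma^{-1})\,d(\mu-\mup)(\gamma)=\lim_{\epsilon\to 0^+}\int_{\CP(\G)}\hol_{h,H}(\gamma^{-1})\1_{\tau(\gamma)>\epsilon}\,d(\mu-\mup)(\gamma).\]
For each fixed $\epsilon>0$, the restrictions of $\mu$ and $\mup$ to $\{\tau>\epsilon\}$ are finite, so the integral on the right can be split as a bona fide difference of two integrals.

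For the $\mu$-integral, unfolding the definition of $\mu_{x,y}$ in Definition~\ref{defmu} and applying Corollary~\ref{FKxy} block-by-block in $(x,y)\in\V\times\V$ gives, as an endomorphism of $\Omega^0(\V,\F)$,
\[\int_{\CP(\G)}\hol_{h,H}(\gamma^{-1})\1_{\tau(\gamma)>\epsilon}\,d\mu(\gamma)=\int_\epsilon^\infty e^{-t\Delta_{h,H}}\,\frac{dt}{t}.\]
For the $\mup$-integral, I would use directly the explicit formula $\int F\,d\mup=\sum_{x\in\V}\int_0^\infty F((x,t))\,e^{-t}\frac{dt}{t}$ and the fact that the $H$-twisted holonomy of a constant loop $(x,t)$ is $e^{-tH_x}\in\End(\F_x)$; this gives
\[\int_{\CP(\G)}\hol_{h,H}(\gamma^{-1})\1_{\tau(\gamma)>\epsilon}\,d\mup(\gamma)=\int_\epsilon^\infty e^{-t(\Id+H)}\,\frac{dt}{t},\]
where $e^{-t(\Id+H)}$ acts block-diagonally on $\Omega^0(\V,\F)$ as $e^{-t(\Id_{\F_x}+H_x)}$ on each $\F_x$.

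Subtracting and applying the elementary identity~\eqref{alg2} with $A=\Delta_{h,H}$ and $B=\Id+H$, both of which are positive Hermitian (thanks to Proposition~\ref{deltainv} and the positivity of $H$), yields
\[\lim_{\epsilon\to 0^+}\int_\epsilon^\infty\frac{e^{-t\Delta_{h,H}}-e^{-t(\Id+H)}}{t}\,dt=\log(\Id+H)-\log\Delta_{h,H},\]
which is the announced formula. The main technical obstacle is precisely the separate divergence at $t=0$: before subtraction neither operator-valued integral converges, so one must carry the $\epsilon$-cutoff through each computation in parallel and then interchange a limit with a finite operator-valued integral. The three ingredients that make this possible are the finiteness of $\mu-\mup$, the uniform bound $\|\hol_{h,H}(\gamma^{-1})\|\leq 1$, and the applicability of~\eqref{alg2} to the pair $(\Delta_{h,H},\Id+H)$ of positive Hermitian operators on $\Omega^0(\V,\F)$.
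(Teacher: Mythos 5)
Your proof is correct and follows essentially the same route as the paper: unfold $\mu$ and $\mup$ over $\CP(\G)\times(0,+\infty)$, apply the Feynman--Kac formula (Corollary~\ref{FKxy}) to identify the integrands as $e^{-t\Delta_{h,H}}$ and $e^{-t(\Id+H)}$, and conclude with the elementary identity~\eqref{alg2}. The $\epsilon$-cutoff is just a more explicit way of handling the cancellation at $t=0$ that the paper treats by keeping the two terms combined as $(1-\1_{\{\gamma_{|[0,t]}=(x,t)\}})$ under a single integral, so the substance is identical.
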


\begin{proof} Consider $x$ and $y$ in $\V$. We have
\begin{align*}
\mbox{l.h.s. of } \eqref{iplush}&=\frac{1}{\lambda_{y}}  \int_{\CP(\G)\times (0,+\infty)}
\hol(\gamma_{|[0,t)}^{-1}) 
(1-\1_{\{\gamma_{|[0,t)}=(x,t)\}}) \;\d\P_{x}(\gamma)\frac{\d t}{t}\\
&=\frac{1}{\lambda_{y}}\int_{0}^{\infty} \left((e^{-t\Delta_{h,H}})_{x,y}-\delta_{x,y} e^{-t}e^{-tH_{x}} \right) \frac{\d t}{t}\\
&=\frac{1}{\lambda_{y}}\left(\int_{0}^{\infty} \frac{e^{-t\Delta_{h,H}}- e^{-t(\Id +H)}}{t}\; \d t\right)_{x,y}.
\end{align*}
Summing over $y$ with respect to the reference measure $\lambda$ and over $x$ with respect to the counting measure, and using \eqref{alg2}, we find the expected result. \end{proof}

\section{The Gaussian free vector field}\label{sec:GFF}

In this section, we consider a weighted graph with a well $\G$, over which we are given a vector bundle $\F$ with a connection $h$ and a potential $H$. In this setting, we will construct a Gaussian probability measure on $\Omega^{0}(\V\setminus \W,\F)$.

\subsection{Probability measures on $\Omega^{0}(\V\setminus \W,\F)$} \label{sec:prob measures sections}

Recall from Section \ref{sec:GLHS} that if $H_{x}$ is a non-negative Hermitian operator on $\F_{x}$ for every $x\in \V\setminus\W$, then the operator $\Delta_{h,H}$ is positive Hermitian on $\Omega^{0}(\V\setminus \W,\F)$. We are going to use it to define a probability measure on $\Omega^{0}(\V\setminus\W,\F)$.

Let us first discuss Lebesgue measures on $\Omega^{0}(\V\setminus\W,\F)$. Let us agree that the natural Lebesgue measure on a Euclidean or Hermitian space is that which gives measure $1$ to any real cube the edges of which have length $1$. With this convention, there is a natural Lebesgue measure $\Leb_{\Omega^{0}}$ on $\Omega^{0}(\V\setminus\W,\F)$ and, for each $x\in \V\setminus\W$, a natural Lebesgue measure $\Leb_{x}$ on $\F_{x}$. Considering the way in which the Euclidean or Hermitian structures on $\Omega^{0}(\V,\F)$ on one hand and on the fibres of $\F$ on the other hand are related, through the measure $\lambda$, we find the equality
\[\Leb_{\Omega^{0}}=\bigotimes_{x\in \V\setminus\W} \left(\lambda_{x}^{\frac{\beta}{2}r}\;  \Leb_{x}\right),\]
where, in order to treat the Euclidean and Hermitian cases simultaneously, we used the constant
\begin{equation}\label{beta}
\beta=\dim_{\R}\K,
\end{equation}
equal to $1$ in the Euclidean case and to $2$ in the Hermitian case.

Let us denote by $|\V|$ the cardinality of $\V$. We define the probability measure 
$\P^{h,H}$ on $\Omega^{0}(\V\setminus\W,\F)$ by setting
\begin{equation}\label{defPhH}
\d\P^{h,H}(f)=\left(\frac{\det \Delta_{h,H}}{\pi^{r|\V|}}\right)^{\frac{\beta}{2}}e^{-\frac{1}{2}(f,\Delta_{h,H} f)_{\Omega^{0}}} \;\d\Leb_{\Omega^{0}}(f).
\end{equation}
We shall denote by $\Phi$ the canonical process (that is, the identity map) on $\Omega^{0}(\V\setminus\W,\F)$, so that for all bounded measurable function $F:\Omega^{0}(\V\setminus\W,\F)\to \R$, we have
\[\Ex^{h,H}[F(\Phi)]=\int_{\Omega^{0}(\V,\F)} F(f)\; \d\P^{h,H}(f).\]
The random section $\Phi$ is called the \new{Gaussian free vector field}, or simply Gaussian free field, on~$\G$ associated with $h$ and $H$.

We will often consider the Gaussian free vector field associated to a connection $h$ and the zero potential. In that case, we will use the notation $\Ex^{h}$ instead of $\Ex^{h,0}$.

\subsection{Covariance} The measure $\P^{h,H}$ is Gaussian on $\Omega^{0}(\V\setminus\W,\F)$ and we will now compute its covariance function. Let us introduce some notation which will be useful to express this covariance, and several of the results that we shall prove. 

Consider a Euclidean or Hermitian vector space $(V,(\cdot,\cdot))$. Recall that if $V$ is Hermitian, we take the Hermitian scalar product to be linear in the second variable. We define the \new{conjugation map} to be the following:
\begin{align*}
V & \longrightarrow V^{*}\\
v & \longmapsto \ol{v}=(v, \cdot).
\end{align*}
This map is an antilinear isomorphism between $V$ and its dual, and it satisfies, for every scalar~$z$ and every vector $v$, the relation $\ol{zv}=\ol{z}\; \ol{v}$.

If $V$ and $W$ are Euclidean or Hermitian spaces, and if $v$ and $w$ are elements of $V$ and $W$ respectively, then $w\otimes \ol{v}$ is an element of $W\otimes V^{*}\simeq\Hom(V,W)$. More specifically, $w\otimes \ol{v}$ is the linear map of rank 1 from $V$ to $W$ such that for all $u\in V$,
\[(w\otimes \ol{v})(u)=(v,u) w.\]
With this notation, a standard Gaussian computation yields the following identity.

\begin{proposition}\label{covariancebrute} Let $\F$ be a vector bundle over $\G$, endowed with a connection $h$ and a potential~$H$. Let $\Phi$ be the associated Gaussian free vector field on $\G$. Then
\[\Ex^{h,H}[\Phi\otimes \ol{\Phi}]=\Delta_{h,H}^{-1}.\]
In other words, for all $f,g\in \Omega^{0}(\V\setminus\W,\F)$, 
\begin{equation}\label{covpastropbrute}
\Ex^{h,H}\left[(f,\Phi)_{\Omega^{0}}(\Phi,g)_{\Omega^{0}}\right]=(f,\Delta_{h,H}^{-1}g)_{\Omega^{0}}.
\end{equation}
\end{proposition}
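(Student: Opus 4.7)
The proof is a direct Gaussian computation, reducing, via spectral diagonalisation of $\Delta_{h,H}$, to the one-dimensional case.

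The plan is to first apply the spectral theorem to $\Delta_{h,H}$, which is positive Hermitian on the space $(\Omega^{0}(\V,\F),(\cdot,\cdot)_{\Omega^{0}})$. This produces an orthonormal eigenbasis $(e_{1},\ldots,e_{N})$, with $N = r|\V|$, and positive eigenvalues $\lambda_{1},\ldots,\lambda_{N}$. In the coordinates $\phi_{i}=(e_{i},\phi)_{\Omega^{0}}\in\K$, the quadratic form in the exponent decomposes as $(\phi,\Delta_{h,H}\phi)_{\Omega^{0}} = \sum_{i}\lambda_{i}|\phi_{i}|^{2}$, and, since $(e_{1},\ldots,e_{N})$ is orthonormal for the Hermitian structure, the natural Lebesgue measure $\Leb_{\Omega^{0}}$ becomes the standard Lebesgue measure on $\K^{N}\simeq\R^{\beta N}$. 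Consequently, the density \eqref{defPhH} factorises as a product of $N$ independent one-dimensional $\K$-Gaussian densities.

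Next, I would verify by a direct one-variable integration that, in this factored form, each coordinate $\Phi_{i}$ satisfies $\Ex^{h,H}[\Phi_{i}\overline{\Phi_{j}}] = \delta_{ij}/\lambda_{i}$; the normalisation constant $(\det\Delta_{h,H}/\pi^{r|\V|})^{\beta/2}$ appearing in \eqref{defPhH} is designed precisely so that this identity holds uniformly in the real and complex cases, with the parameter $\beta$ absorbing the difference between them. Expanding two arbitrary sections $f = \sum_{i}f_{i}e_{i}$ and $g = \sum_{i}g_{i}e_{i}$, with $f_{i}=(e_{i},f)_{\Omega^{0}}$, and using that $(f,\phi)_{\Omega^{0}} = \sum_{i}\overline{f_{i}}\,\phi_{i}$ in this basis, one then computes
\begin{align*}
\Ex^{h,H}\big[(f,\Phi)_{\Omega^{0}}(\Phi,g)_{\Omega^{0}}\big]
 &= \sum_{i,j}\overline{f_{i}}\,g_{j}\,\Ex^{h,H}[\Phi_{i}\overline{\Phi_{j}}] \\
 &= \sum_{i}\frac{\overline{f_{i}}\,g_{i}}{\lambda_{i}}
  = \big(f,\Delta_{h,H}^{-1}g\big)_{\Omega^{0}},
\end{align*}
which is the identity \eqref{covpastropbrute}. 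Unwinding the definition of the conjugation map $\overline{(\cdot)}$ then yields the equivalent statement $\Ex^{h,H}[\Phi\otimes\overline{\Phi}] = \Delta_{h,H}^{-1}$ in $\End(\Omega^{0}(\V,\F))$.

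The only genuine difficulty is the bookkeeping needed to handle the Euclidean and Hermitian cases in parallel: checking that the antilinearity of the Hermitian form in its first slot, the conjugation map $\bar v = (v,\cdot)$, and the normalisation constants all interact consistently so that a single argument covers both $\beta=1$ and $\beta=2$. Apart from this, the result is the textbook identification of the covariance operator of a Gaussian measure with the inverse of the operator appearing in its density.
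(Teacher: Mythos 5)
Your spectral-diagonalisation argument is exactly the ``standard Gaussian computation'' that the paper invokes for this proposition without writing it out, so the approach and the conclusion match. One caution about the step you defer to a one-variable integration: the normalising constant in \eqref{defPhH} cannot ``absorb'' the real/complex difference, because a multiplicative constant never affects the moments of the normalised law --- the $\beta$-dependence has to sit in the quadratic form in the exponent. Concretely, with the weight taken literally as $e^{-\frac{1}{2}(f,\Delta_{h,H}f)_{\Omega^{0}}}$, your diagonal computation gives $\Ex^{h,H}[\Phi_{i}\overline{\Phi_{j}}]=\delta_{ij}/\lambda_{i}$ when $\K=\R$ but $2\delta_{ij}/\lambda_{i}$ when $\K=\C$; to obtain $\Delta_{h,H}^{-1}$ uniformly, as the proposition asserts, the Hermitian case must be read with the weight $e^{-(f,\Delta_{h,H}f)_{\Omega^{0}}}$ (equivalently, a factor $\tfrac{\beta}{2}$ in the exponent), and with that convention --- which is the one the statement presupposes --- your argument goes through verbatim and yields \eqref{covpastropbrute}.
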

Note that this formulation is true in the Euclidean case as well as in the Hermitian case.

It is often useful to have an expression of the covariance of the values of the Gaussian free vector field in two specific fibres of $\F$. Passing from the space of sections to the individual fibres involves the reference measure $\lambda$, so that the inverse of the Laplacian will be replaced by the Green section. 

\begin{proposition}\label{prop:covariance} Let $\F$ be a vector bundle over $\G$, endowed with a connection $h$ and a potential~$H$. Let $\Phi$ be the Gaussian free vector field on $\G$. Let $G_{h,H}$ be the Green section of $\F$. For all $x,y\in \V\setminus\W$, we have the following identity in $ \Hom(\F_{y},\F_{x})$:
\begin{equation}\label{EphiG}
\Ex^{h,H}\Big[\Phi_{x}\otimes \ol{\Phi_{y}}\Big]=(G_{h,H})_{x,y}.
\end{equation}
\end{proposition}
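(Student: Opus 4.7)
The plan is to reduce the statement to Proposition~\ref{covariancebrute} by testing both sides against vectors in the fibres $\F_x$ and $\F_y$, and accounting for the factors of $\lambda$ that appear when one moves between the Hermitian pairing on $\Omega^0(\V,\F)$ and the Hermitian pairings on individual fibres. The key algebraic identity I will use is the relation $G_{h,H}=\Delta_{h,H}^{-1}\circ\Lambda^{-1}$, which follows immediately from the definition of the Green section in Section~\ref{sec:Green}.

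Concretely, I would fix arbitrary $u\in\F_y$ and $v\in\F_x$ and consider the two sections $f,g\in\Omega^{0}(\V,\F)$ defined by
\[
f(z)=\frac{\delta_{z,x}}{\lambda_{x}}\,v,\qquad g(z)=\frac{\delta_{z,y}}{\lambda_{y}}\,u.
\]
A direct computation from the definition of $(\cdot,\cdot)_{\Omega^0}$ gives $(f,\Phi)_{\Omega^0}=\langle v,\Phi_x\rangle_x$ and $(\Phi,g)_{\Omega^0}=\langle\Phi_y,u\rangle_y$, so the identity \eqref{covpastropbrute} of Proposition~\ref{covariancebrute} becomes
\[
\Ex^{h,H}\bigl[\langle v,\Phi_x\rangle_x\,\langle\Phi_y,u\rangle_y\bigr]=(f,\Delta_{h,H}^{-1}g)_{\Omega^0}.
\]

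Next I would evaluate the right-hand side. Since $\Lambda g$ is the section equal to $u$ at $y$ and $0$ elsewhere, and since $\Delta_{h,H}^{-1}=G_{h,H}\circ\Lambda$, one has $(\Delta_{h,H}^{-1}g)(z)=(G_{h,H}\Lambda g)(z)=(G_{h,H})_{z,y}(u)$ for each $z\in\V$. Plugging this back into the Hermitian pairing on $\Omega^0$ and noting that only the $z=x$ term contributes through $f$, the factors $\lambda_x$ and $1/\lambda_x$ cancel and one obtains
\[
(f,\Delta_{h,H}^{-1}g)_{\Omega^0}=\langle v,(G_{h,H})_{x,y}(u)\rangle_x.
\]

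Combining the two displays yields
\[
\bigl\langle v,\,\Ex^{h,H}[\Phi_x\otimes\ol{\Phi_y}](u)\bigr\rangle_x
=\Ex^{h,H}\bigl[\langle\Phi_y,u\rangle_y\langle v,\Phi_x\rangle_x\bigr]
=\langle v,(G_{h,H})_{x,y}(u)\rangle_x,
\]
using the definition $(\Phi_x\otimes\ol{\Phi_y})(u)=\langle\Phi_y,u\rangle_y\Phi_x$ and the linearity of $\langle v,\cdot\rangle_x$ in its second argument. Since $u\in\F_y$ and $v\in\F_x$ are arbitrary, the non-degeneracy of the Hermitian forms on the fibres gives \eqref{EphiG}. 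The only subtle point, which I would double-check, is the placement of conjugations in the Hermitian case: the antilinearity of $\overline{(\cdot)}$ and of $\langle\cdot,\cdot\rangle_y$ in the first slot must match the antilinearity built into the identification $\F_x\otimes\F_y^*\simeq\Hom(\F_y,\F_x)$ used in the statement, but this is precisely why the convention linear-in-the-second-argument was fixed at the outset.
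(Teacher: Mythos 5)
Your proposal is correct and follows essentially the same route as the paper: test against sections supported at the single vertices $x$ and $y$, apply Proposition~\ref{covariancebrute}, and convert $\Delta_{h,H}^{-1}$ into $G_{h,H}=\Delta_{h,H}^{-1}\circ\Lambda^{-1}$; the only (cosmetic) difference is that you absorb the factors $1/\lambda_x$, $1/\lambda_y$ into the test sections, whereas the paper uses $f=\xi\1_{\{x\}}$, $g=\eta\1_{\{y\}}$ and divides out $\lambda_x\lambda_y$ afterwards. The bookkeeping of conjugations and the final non-degeneracy argument are handled correctly.
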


\begin{proof} Let us choose two vertices $x$ and $y$ in $\V$ and two vectors $\xi\in \F_{x}$ and $\eta\in \F_{y}$. Applying Proposition \ref{covariancebrute} to the sections $f=\xi \1_{\{x\}}$ and $g=\eta \1_{\{y\}}$, which vanish respectively everywhere except at $x$ and $y$, and satisfy $f(x)=\xi$ and $g(y)=\eta$, we find
\[\lambda_{x} \lambda_{y} \Ex^{h,H}\left[\langle \xi,\Phi_{x}\rangle_{x}\langle \Phi_{y},\eta\rangle_{y}\right]=\lambda_{x} \langle \xi,\big(\Delta_{h,H}^{-1}\big)_{x,y} \eta\rangle_{x}.\]
This equality can be written
\[\Ex^{h,H}\left[\langle \xi,\Phi_{x}\rangle_{x}\langle \Phi_{y},\eta\rangle_{y}\right]= \langle \xi,\big(\Delta_{h,H}^{-1}\Lambda^{-1}\big)_{x,y} \eta\rangle_{x},\]
or even
\[\left\langle \xi,\Ex^{h,H}\Big[\Phi_{x}\otimes \ol{\Phi_{y}}\Big]\eta\right\rangle_{x}= \langle \xi,\left(G_{h,H}\right)_{x,y} \eta\rangle_{x},\]
which, because it is true for all $\xi$ and $\eta$, implies the result.
\end{proof}

This proposition and \eqref{eq:Green-equivariance} allow us to understand how the Gaussian free vector field transforms under the action of a gauge transformation. Indeed, let us choose $j\in \Aut(\F)$. For all vertices $x,y$ and all $\xi\in \F_{x}$ and $\eta\in \F_{y}$, we have
\begin{align*}
\Ex^{h,H}\big[\langle \xi,j_{x}(\Phi_{x})\rangle_{x} \langle j_{y}(\Phi_{y}),\eta\rangle_{y}\big]&=\Ex^{h,H}\big[\langle j_{x}^{-1}(\xi),\Phi_{x}\rangle_{x} \langle \Phi_{y},j_{y}^{-1}(\eta)\rangle_{y}\big]\\
&=\langle\xi,(j_{x}\circ (G_{h,H})_{x,y}\circ j_{y}^{-1})(\eta)\rangle_{x}\\
&=\langle\xi,(G_{j\cdot h,j\cdot H})_{x,y}(\eta)\rangle_{x}\\
\end{align*}
Thus, if $\Phi$ is the Gaussian free vector field associated to the pair $(h,H)$ and if $j$ is a gauge transformation, then the Gaussian free vector field associated to the pair $(j\cdot h, j\cdot H)$ has the same distribution as $j(\Phi)$.

\subsection{Laplace transform}
For future reference, let us record the Laplace transform and a formula for the higher moments of the Gaussian free vector field. We keep the notation of the previous section.

The next proposition follows from \eqref{covpastropbrute} and the standard computation of a Gaussian integral. Note that it is written in such a way as to be true in the real case as well as in the complex case.

\begin{proposition}\label{prop:Laplace} For all $f\in\Omega^0(\V\setminus\W,\F)$, we have
\[\Ex^{h,H}\left[\left|e^{\frac{1}{2}(f,\Phi)_{\Omega^0}}\right|^{2}\right]=e^{\frac{1}{2}(f,\Delta_{h,H}^{-1}f)_{\Omega^0}}\,.\]
\end{proposition}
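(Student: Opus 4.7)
The plan is to compute the expectation directly as a Gaussian integral by completing the square in the exponent, treating the real ($\K=\R$) and complex ($\K=\C$) cases in parallel.

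First, I would rewrite the integrand in a form symmetric in $\phi$ and its conjugate. Since $|e^{z/2}|^2 = e^{(z+\bar z)/2}$ for any $z\in\K$ and since the antilinearity of $(\cdot,\cdot)_{\Omega^0}$ in its first variable gives $\overline{(f,\phi)_{\Omega^0}} = (\phi,f)_{\Omega^0}$, the integrand becomes
\[\bigl|e^{\frac{1}{2}(f,\phi)_{\Omega^0}}\bigr|^2 = e^{\frac{1}{2}\bigl((f,\phi)_{\Omega^0} + (\phi,f)_{\Omega^0}\bigr)},\]
which reduces to $e^{(f,\phi)_{\Omega^0}}$ in the real case where $(f,\phi)=(\phi,f)$.

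Second, I would insert this into the density formula \eqref{defPhH} and complete the square. Proposition \ref{deltainv}, combined with the non-negativity of $H$, ensures that $\Delta_{h,H}$ is Hermitian and invertible on $\Omega^0(\V,\F)$, so the substitution $\psi = \phi - \Delta_{h,H}^{-1}f$ is well defined. Using the Hermiticity of $\Delta_{h,H}$, one gets the algebraic identity
\[-\tfrac{1}{2}(\phi,\Delta_{h,H}\phi)_{\Omega^0} + \tfrac{1}{2}\bigl((f,\phi)_{\Omega^0}+(\phi,f)_{\Omega^0}\bigr) = -\tfrac{1}{2}(\psi,\Delta_{h,H}\psi)_{\Omega^0} + \tfrac{1}{2}(f,\Delta_{h,H}^{-1}f)_{\Omega^0},\]
where the constant term is real because $(\Delta_{h,H}^{-1}f,f)_{\Omega^0} = \overline{(f,\Delta_{h,H}^{-1}f)_{\Omega^0}}$ equals $(f,\Delta_{h,H}^{-1}f)_{\Omega^0}$ by Hermiticity. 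The translation $\phi\mapsto\psi$ preserves the Lebesgue measure $\Leb_{\Omega^0}$, so the remaining integral of $e^{-\frac{1}{2}(\psi,\Delta_{h,H}\psi)_{\Omega^0}}$ against $\Leb_{\Omega^0}$ cancels exactly against the normalization constant of \eqref{defPhH}, leaving precisely the scalar $e^{\frac{1}{2}(f,\Delta_{h,H}^{-1}f)_{\Omega^0}}$.

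The only genuine subtlety is the square-completion step in the complex case, where one has to be careful with the antilinearity of $(\cdot,\cdot)_{\Omega^0}$ in its first argument; the symmetric combination $\tfrac{1}{2}((f,\phi)+(\phi,f))$ is tailored so that the same identity holds uniformly in the real and complex settings. Once this is verified, the proof closes in the same way for both values of $\beta$, matching the uniform formulation of the statement.
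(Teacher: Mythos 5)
Your proof is correct and is essentially the computation the paper has in mind: the paper disposes of this proposition in one line as "the standard computation of a Gaussian integral" (citing \eqref{covpastropbrute}), and you carry out exactly that standard computation directly from the density \eqref{defPhH}, with the square-completion identity, the translation invariance of $\Leb_{\Omega^0}$, and the cancellation against the normalising constant all checking out in both the real and complex cases. Your observation that $\tfrac12\bigl((f,\phi)_{\Omega^0}+(\phi,f)_{\Omega^0}\bigr)=\Re\,(f,\phi)_{\Omega^0}$ is precisely what makes the formula uniform in $\beta$, as the statement intends.
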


The following formula, which is a reformulation of Wick's theorem (see \cite{Janson}) in our context, combined with \eqref{masterDE}, will be useful in the proof of the covariant Symanzik identity (Theorem \ref{thm:Symanzik}). It is one of the results where the real and complex case are quite different.

\begin{proposition}\label{wick} Assume that $\K=\R$. Then, for all integer $k\geq 0$ and all sections $f_1,\ldots,f_{k}\in\Omega^0(\V\setminus\W,\F)$, we have
\begin{equation}\label{Wickreel}
\Ex^{h,H}\left[\prod_{i=1}^{k}(f_i,\Phi)_{\Omega^0}\right]=\sum_{\pi}\prod_{\{i,j\}\in \pi}\int_{\CP(\G)}\lambda_{\ul{\gamma}} \langle f_{i}(\ul{\gamma}),\hol_{h,H}(\gamma^{-1})f_{j}(\ol{\gamma})\rangle_{\ul{\gamma}}\; \d\nu(\gamma),
\end{equation}
where the sum is over all partitions of the set $\{1,\ldots,k\}$ by pairs. In particular, this expectation is zero if $k$ is odd.

Assume that $\K=\C$. Then, for all integers $k,l\geq 0$, all $f_1,\ldots,f_{k}$ and $f'_{1},\ldots,f'_{l}$ in $\Omega^0(\V\setminus\W,\F)$, we have
\begin{equation}\label{Wickcomplexe}
\Ex^{h,H}\left[\prod_{i=1}^{k}(f_i,\Phi)_{\Omega^0}\prod_{j=1}^{l}(\Phi,f'_{j})_{\Omega^{0}}\right]=\sum_{\sigma}\prod_{i=1}^{k}\int_{\CP(\G)}\lambda_{\ul{\gamma}} \langle f_{i}(\ul{\gamma}),\hol_{h,H}(\gamma^{-1})f'_{\sigma(i)}(\ol{\gamma})\rangle_{\ul{\gamma}}\; \d\nu(\gamma),
\end{equation}
where the sum is over all bijections from $\{1,\ldots,k\}$ to $\{1,\ldots,l\}$. In particular, this expectation is zero if $k\neq l$.
\end{proposition}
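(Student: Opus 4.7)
\bigskip

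\noindent\textbf{Proof proposal.} The plan is to combine two ingredients: the classical Wick (Isserlis) theorem applied to the covariant Gaussian free field, and the path-integral representation of $\Delta_{h,H}^{-1}$ furnished by Proposition \ref{propholdeltanu}.

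First I would treat the real case. By Proposition \ref{covariancebrute}, the family of real random variables $\left((f_{i},\Phi)_{\Omega^{0}}\right)_{1\le i\le k}$ is a centered real Gaussian vector whose covariance matrix has entries $(f_{i},\Delta_{h,H}^{-1}f_{j})_{\Omega^{0}}$. The classical Isserlis formula then gives
\[
\Ex^{h,H}\left[\prod_{i=1}^{k}(f_i,\Phi)_{\Omega^0}\right]=\sum_{\pi}\prod_{\{i,j\}\in \pi}(f_{i},\Delta_{h,H}^{-1}f_{j})_{\Omega^{0}},
\]
the sum ranging over pair partitions of $\{1,\ldots,k\}$, and vanishing when $k$ is odd. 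For the complex case, I would first invoke the rotational symmetry $\Phi\mapsto e^{i\theta}\Phi$, under which $\P^{h,H}$ is invariant (since the Hermitian form $(\cdot,\Delta_{h,H}\cdot)_{\Omega^{0}}$ is phase-invariant). This symmetry forces the expectation in \eqref{Wickcomplexe} to vanish unless $k=l$. When $k=l$, the complex Wick theorem for circularly symmetric complex Gaussians with covariance $\Ex^{h,H}[\Phi\otimes\ol{\Phi}]=\Delta_{h,H}^{-1}$ gives
\[
\Ex^{h,H}\left[\prod_{i=1}^{k}(f_i,\Phi)_{\Omega^0}\prod_{j=1}^{l}(\Phi,f'_{j})_{\Omega^{0}}\right]=\sum_{\sigma}\prod_{i=1}^{k}(f_{i},\Delta_{h,H}^{-1}f'_{\sigma(i)})_{\Omega^{0}},
\]
the sum ranging over bijections from $\{1,\ldots,k\}$ to $\{1,\ldots,l\}$.

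It remains to identify each pairwise covariance with the path integral appearing in the statement, i.e.\ to prove
\[
(f,\Delta_{h,H}^{-1}g)_{\Omega^{0}}=\int_{\CP(\G)}\lambda_{\ul{\gamma}}\langle f(\ul{\gamma}),\hol_{h,H}(\gamma^{-1})g(\ol{\gamma})\rangle_{\ul{\gamma}}\;d\nu(\gamma).
\]
Substituting \eqref{masterDE} and using linearity of the integral and of the Hermitian form, the left-hand side equals $\int_{\CP(\G)}(f,\hol_{h,H}(\gamma^{-1})g)_{\Omega^{0}}\,d\nu(\gamma)$. Since $\nu=\sum_{x,y\in\V}\lambda_{y}\,\nu_{x,y}$ and each $\nu_{x,y}$ is supported on paths going from $x$ to $y$, the operator $\hol_{h,H}(\gamma^{-1})$ sends $\F_{\ol{\gamma}}=\F_{y}$ to $\F_{\ul{\gamma}}=\F_{x}$, so that the outer double sum over vertices in $(\cdot,\cdot)_{\Omega^{0}}$ collapses to the single term at the endpoints of $\gamma$, producing the stated integrand. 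Plugging the resulting identity into each factor of the Wick expansions above yields \eqref{Wickreel} and \eqref{Wickcomplexe}.

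The only genuine subtlety, rather than a single hard step, is the bookkeeping of weights: a factor $\lambda_{\ul{\gamma}}$ is contributed by the outer $\Omega^{0}$ inner product, a factor $\lambda_{\ol{\gamma}}$ would naively arise from the decomposition $\nu=\sum_{y}\lambda_{y}\nu_{x,y}$, and these must combine correctly with the relation $G_{h,H}=\Delta_{h,H}^{-1}\Lambda^{-1}$ from Section \ref{sec:Green} so as to leave exactly the single factor $\lambda_{\ul{\gamma}}$ visible in the statement. Verifying this is straightforward, but it is the one place where care is needed.
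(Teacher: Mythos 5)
Your proposal is correct and follows essentially the same route the paper intends: the statement is just the classical (real and circularly symmetric complex) Wick theorem applied to the Gaussian vector with covariance given by Proposition \ref{covariancebrute}, with each pairwise covariance $(f,\Delta_{h,H}^{-1}g)_{\Omega^0}$ rewritten as the $\nu$-integral via \eqref{masterDE} (equivalently \eqref{masterDE1} and $G_{h,H}=\Delta_{h,H}^{-1}\Lambda^{-1}$). Your bookkeeping of the $\lambda$ factors is accurate: the $\lambda_{\ol{\gamma}}$ from $\nu=\sum_{x,y}\lambda_y\nu_{x,y}$ cancels the $\Lambda^{-1}$ in the Green section, and the remaining $\lambda_{\ul{\gamma}}$ is the weight appearing in $(\cdot,\cdot)_{\Omega^0}$, exactly as in the stated formula.
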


\subsection{Square of the shifted Gaussian free vector field}

In this section, and in preparation for the proof of Theorem \ref{thm:LeJanSznitman}, we record a useful lemma which expresses the Laplace transform of the square of the shifted Gaussian free vector field. This is a generalisation of~\cite[Proposition~2.14]{Sznitman-book} to the present covariant setting. Recall the definition of $\sigma_{h}$ from Proposition \ref{kato}.

\begin{proposition}\label{useful-gaussian-proposition}
Let $h$ be a unitary connection and $H$ a potential on $\F$. Assume that $H> -\sigma_{h}$. For all $f\in\Omega^0(\V\setminus\W,\F)$, we have
\begin{equation}\label{ugp}
\Ex^h\left[e^{-\frac{1}{2}\left(\Phi+f,H (\Phi+f)\right)_{\Omega^0}}\right]=\Ex^h\left[e^{-\frac{1}{2}\left(\Phi,H \Phi\right)_{\Omega^0}}\right] e^{\frac{1}{2}\left(\Delta_h f, (\Delta_{h,H}^{-1}-\Delta_{h}^{-1})\Delta_h f\right)_{\Omega^0}}.
\end{equation}
\end{proposition}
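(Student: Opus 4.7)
The plan is to perform a direct Gaussian computation, exploiting the explicit density \eqref{defPhH}. Since everything in sight is a quadratic or linear functional of the field, the argument reduces to completing the square and comparing normalisation constants.

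First I would write the left-hand side of \eqref{ugp} as
\[
\left(\tfrac{\det \Delta_h}{\pi^{r|\V|}}\right)^{\!\beta/2}\int_{\Omega^0(\V,\F)} \exp\!\left(-\tfrac{1}{2}(\varphi,\Delta_h\varphi)_{\Omega^0}-\tfrac{1}{2}(\varphi+f,H(\varphi+f))_{\Omega^0}\right) d\mathrm{Leb}_{\Omega^0}(\varphi),
\]
and expand the integrand. Grouping the purely quadratic terms in $\varphi$ gives $(\varphi,\Delta_{h,H}\varphi)_{\Omega^0}$, so that under the assumption $H>-\sigma_h$ the operator $A:=\Delta_{h,H}$ is Hermitian positive and the Gaussian integral will converge. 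The linear-in-$\varphi$ part is $(f,H\varphi)_{\Omega^0}+(\varphi,Hf)_{\Omega^0}$, and the $\varphi$-independent remainder is $(f,Hf)_{\Omega^0}$; Hermiticity of $H$ lets me treat the Euclidean and Hermitian cases uniformly.

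Next I complete the square, substituting $\psi=\varphi+A^{-1}Hf$, which is a measure-preserving translation of $\Omega^0(\V,\F)$. The quadratic form becomes
\[
(\psi,A\psi)_{\Omega^0}-(Hf,A^{-1}Hf)_{\Omega^0}+(f,Hf)_{\Omega^0},
\]
so the Gaussian integral reduces to the normalisation of $\P^{h,H}$, yielding a factor $(\det\Delta_{h,H}/\pi^{r|\V|})^{-\beta/2}$. Combining with the prefactor leaves
\[
\Ex^h\!\left[e^{-\frac{1}{2}(\Phi+f,H(\Phi+f))_{\Omega^0}}\right]=\left(\tfrac{\det\Delta_h}{\det\Delta_{h,H}}\right)^{\!\beta/2}\exp\!\left(\tfrac12(Hf,\Delta_{h,H}^{-1}Hf)_{\Omega^0}-\tfrac12(f,Hf)_{\Omega^0}\right).
\]
Setting $f=0$ recognises the determinant prefactor as $\Ex^h[e^{-\frac12(\Phi,H\Phi)_{\Omega^0}}]$, which takes care of the normalisation of the right-hand side of \eqref{ugp}.

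It remains to identify the exponent $\tfrac12(Hf,\Delta_{h,H}^{-1}Hf)-\tfrac12(f,Hf)$ with $\tfrac12(\Delta_h f,(\Delta_{h,H}^{-1}-\Delta_h^{-1})\Delta_h f)$. This is the only algebraic step, and it follows from the relation $Hf=\Delta_{h,H}f-\Delta_h f$ together with the self-adjointness of $\Delta_{h,H}^{-1}$ and $\Delta_h^{-1}$: expanding $(\Delta_h f,\Delta_{h,H}^{-1}\Delta_h f)$ using this identity gives $(\Delta_h f,f)-(f,Hf)+(Hf,\Delta_{h,H}^{-1}Hf)$, and subtracting $(\Delta_h f,\Delta_h^{-1}\Delta_h f)=(f,\Delta_h f)$ reproduces the required expression. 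There is no serious obstacle; the only points requiring care are the sign conventions for the Hermitian form (antilinear in the first argument) and verifying the final algebraic identity, which is essentially a one-line consequence of $\Delta_{h,H}=\Delta_h+H$.
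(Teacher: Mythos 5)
Your proof is correct and follows essentially the same route as the paper: write out the Gaussian density, complete the square and use translation invariance of Lebesgue measure, then identify the exponent $\tfrac12(Hf,\Delta_{h,H}^{-1}Hf)-\tfrac12(f,Hf)$ with $\tfrac12(\Delta_h f,(\Delta_{h,H}^{-1}-\Delta_h^{-1})\Delta_h f)$ via $Hf=\Delta_{h,H}f-\Delta_h f$ (the paper phrases this as the operator identity $H\Delta_{h,H}^{-1}H-H=\Delta_h\Delta_{h,H}^{-1}\Delta_h-\Delta_h$). The only cosmetic difference is that you evaluate the remaining Gaussian integral as a ratio of determinants and then set $f=0$ to recognise it as $\Ex^h[e^{-\frac12(\Phi,H\Phi)_{\Omega^0}}]$, whereas the paper recognises the integral directly as that expectation up to the common normalisation.
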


\begin{proof} Let us compute the left-hand side of the equality to prove. We will successively apply the definition \eqref{defPhH} of the measure $\P^{h}$, writing $\d\phi$ instead of $\d\Leb_{\Omega^{0}}(\phi)$, then perform the usual completion of the square, then use the invariance by translation of the Lebesgue measure. We find
\begin{align*}
\left(\frac{\pi^{r|\V|}}{\det \Delta_{h}}\right)^{\frac{\beta}{2}} \Ex^{h}\left[e^{-\frac{1}{2}\left(\Phi+f,H (\Phi+f)\right)_{\Omega^0}}\right]&=\int_{\Omega^{0}(\V\setminus\W,\F)} e^{-\frac{1}{2}\left(\phi+f,H (\phi+f)\right)_{\Omega^0}} e^{-\frac{1}{2}(\phi,\Delta_{h}\phi)_{\Omega^{0}}} \;\d\phi\\
&\hspace{-2.5cm}=\int_{\Omega^{0}(\V\setminus\W,\F)} e^{-\frac{1}{2}\left(\phi+\Delta_{h,H}^{-1}Hf, \Delta_{h,H} (\phi +\Delta_{h,H}^{-1}Hf)\right)_{\Omega^{0}}} \;\d\phi \ e^{\frac{1}{2}\left(f,(H\Delta_{h,H}^{-1}H-H) f\right)_{\Omega^{0}}}\\
&\hspace{-2.5cm}=\int_{\Omega^{0}(\V\setminus\W,\F)} e^{-\frac{1}{2}\left(\phi, \Delta_{h,H} \phi\right)_{\Omega^{0}}}\;\d\phi \ e^{\frac{1}{2}\left(\Delta_{h} f,(\Delta_{h,H}^{-1}-\Delta_{h}^{-1}) \Delta_{h}f\right)_{\Omega^{0}}}.
\end{align*}
In the second exponential of the last line, we used the equality 
\[a(a+b)^{-1}a-a=(a+b-b)(a+b)^{-1}(a+b-b)-a=b(a+b)^{-1}b-b\]
which is true in any associative algebra and that we applied in this instance to $a=H$ and $b=\Delta_{h}$ in the algebra $\End(\Omega^{0}(\V\setminus\W,\F))$.

Now, in the integral on the right-hand side we recognise, up to the normalisation factor by which we multiplied both sides, the expectation on the right-hand side of \eqref{ugp}. 
\end{proof}

\section{Isomorphism theorems of Dynkin and Eisenbaum}\label{sec:Dynkin}

In this section, we explain how the two classical isomorphism theorems of Dynkin \cite{Dynkin} and Eisenbaum \cite{Eisenbaum} extend to the present setting, in a way which incorporates the geometry of the vector bundle over the graph. 
Our main tool will be the following combination of \eqref{masterDE1} and \eqref{EphiG}, which reads, for any two vertices $x$ and $y$ of a weighted graph with a well,
\begin{equation}\label{covariance-holonomy}
\Ex^{h,H}\Big[\Phi_x\otimes\ol{\Phi_y}\Big]=\left(G_{h,H}\right)_{x,y}=\int_{\CP(\G)} \hol_{h,H}(\gamma^{-1})\; \d\nu_{x,y}(\gamma).
\end{equation}

\subsection{Dynkin's isomorphism} Our generalisation of Dynkin's isomorphism is the following. We use the notation $\int_{\nu_{x,y}}$ to denote the integral with respect to the finite measure $\nu_{x,y}$. Recall from \eqref{beta} the definition of the constant $\beta$.

\begin{theorem}\label{thm:Dynkin} Let $\G$ be a weighted graph with a well. Let $\F$ be a Hermitian or Euclidean vector bundle over $\G$. Let $h$ be a connection on $\F$ and $H$ a potential on $\F$. For all vertices $x,y$ of~$\V\setminus\W$, the following identity holds in $\Hom(\F_y,\F_x)$:
\begin{equation}\label{Dynkin}
\Ex^h \otimes \mbox{$\int_{\nu_{x,y}}$}  \left[e^{-\frac{1}{2}(\Phi,H\Phi)_{\Omega^0}}\,  \hol_{h,H}(\gamma^{-1})\right]=\Ex^h\left[e^{-\frac{1}{2}(\Phi,H\Phi)_{\Omega^0}}\, \Phi_x\otimes\ol{\Phi_y} \right]\,.
\end{equation}
\end{theorem}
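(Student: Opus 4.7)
The plan is to reduce both sides of \eqref{Dynkin} to the same expression $\mathbb{E}^h\bigl[e^{-\frac{1}{2}(\Phi,H\Phi)_{\Omega^0}}\bigr] \cdot (G_{h,H})_{x,y}$, using the covariant Feynman--Kac identity \eqref{covariance-holonomy} on the left and a Radon--Nikodym argument on the right. The key observation is that on the left-hand side the random section $\Phi$ and the random path $\gamma$ enter through \emph{independent} factors (the only $\Phi$-dependent factor is the exponential weight, while the holonomy depends only on $\gamma$), whereas on the right-hand side the quadratic exponential plays the role of changing the Gaussian measure from $\mathbb{P}^h$ to $\mathbb{P}^{h,H}$.

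First I would handle the left-hand side. Since the integrand factors as a product of a function of $\Phi$ and a function of $\gamma$, Fubini's theorem allows me to split the mixed expectation-integral:
\begin{equation*}
\Ex^h \otimes \mbox{$\int_{\nu_{x,y}}$}  \left[e^{-\frac{1}{2}(\Phi,H\Phi)_{\Omega^0}} \hol_{h,H}(\gamma^{-1})\right]
= \Ex^h\!\left[e^{-\frac{1}{2}(\Phi,H\Phi)_{\Omega^0}}\right] \cdot \int_{\CP(\G)} \hol_{h,H}(\gamma^{-1})\, d\nu_{x,y}(\gamma).
\end{equation*}
The path integral on the right is exactly $(G_{h,H})_{x,y}$ by \eqref{covariance-holonomy} (equivalently \eqref{masterDE1}).

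Second, for the right-hand side, I would use the explicit density \eqref{defPhH}: both $\mathbb{P}^h$ and $\mathbb{P}^{h,H}$ have densities with respect to $\Leb_{\Omega^0}$, and their ratio is
\begin{equation*}
\frac{d\P^{h,H}}{d\P^h}(\phi) = \left(\frac{\det \Delta_{h,H}}{\det \Delta_h}\right)^{\!\frac{\beta}{2}} e^{-\frac{1}{2}(\phi,H\phi)_{\Omega^0}}.
\end{equation*}
Taking $\phi = f$ and $F(\phi) = 1$ gives $\Ex^h[e^{-\frac{1}{2}(\Phi,H\Phi)_{\Omega^0}}] = (\det \Delta_h / \det \Delta_{h,H})^{\beta/2}$, while inserting $F(\phi) = \phi_x \otimes \overline{\phi_y}$ yields
\begin{equation*}
\Ex^h\!\left[e^{-\frac{1}{2}(\Phi,H\Phi)_{\Omega^0}}\, \Phi_x \otimes \ol{\Phi_y}\right] = \left(\frac{\det \Delta_h}{\det \Delta_{h,H}}\right)^{\!\frac{\beta}{2}} \Ex^{h,H}\!\left[\Phi_x \otimes \ol{\Phi_y}\right],
\end{equation*}
and by Proposition~\ref{prop:covariance} the latter expectation equals $(G_{h,H})_{x,y}$. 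Combining these two calculations shows that both sides of \eqref{Dynkin} coincide with $(\det \Delta_h / \det \Delta_{h,H})^{\beta/2} (G_{h,H})_{x,y}$.

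There is no serious obstacle: the proof is essentially a bookkeeping exercise once one has identified that \eqref{covariance-holonomy} gives the same object from two different perspectives (a path-integral representation versus a Gaussian covariance), and that the two Gaussian measures $\mathbb{P}^h$ and $\mathbb{P}^{h,H}$ are related by the multiplicative factor $e^{-\frac{1}{2}(\Phi,H\Phi)}$ up to a determinantal constant. The only point deserving care is that this approach implicitly requires $\Delta_{h,H}$ to be positive definite (so that $\mathbb{P}^{h,H}$ is well defined and the Green section exists); under the standing assumption that $H$ is a potential with $H_x > -\sigma_h$ (in particular if $H \geq 0$) this holds, and the general case follows by analytic continuation in $H$ since both sides are entire functions of the entries of $H$.
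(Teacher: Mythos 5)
Your argument is correct and is essentially the paper's own proof: the right-hand side, once normalised by $\Ex^h\big[e^{-\frac{1}{2}(\Phi,H\Phi)_{\Omega^0}}\big]$, is by definition the covariance $\Ex^{h,H}\big[\Phi_x\otimes\ol{\Phi_y}\big]=(G_{h,H})_{x,y}=\int_{\CP(\G)}\hol_{h,H}(\gamma^{-1})\,d\nu_{x,y}(\gamma)$ via \eqref{covariance-holonomy}, while the left-hand side factorises since $\Phi$ and $\gamma$ enter through independent factors; your version merely makes the Radon--Nikodym density and determinant constants explicit. The closing remark about analytic continuation in $H$ is unnecessary (and somewhat delicate, since for sufficiently negative $H$ both sides cease to be finite); as in the paper, the identity is simply understood to hold whenever the quantities involved are defined.
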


In particular, if $\xi\in \F_{x}$ and $\eta\in \F_{y}$,
\begin{equation}\label{eq:DynkinSandwich}
\Ex^h \otimes \mbox{$\int_{\nu_{x,y}}$}  \left[e^{-\frac{1}{2}(\Phi,H\Phi)_{\Omega^0}}\, \langle  \xi,\hol_{h,H}(\gamma^{-1})(\eta)\rangle_{x} \right]=\Ex^{h,H}\left[\langle \xi,\Phi_{x}\rangle_{x}\langle \Phi_{y},\eta\rangle_{y}\right]
\end{equation}
\begin{proof}
By definition of the measure $\P^{h,H}$, we have
\[\frac{\Ex^h\left[e^{-\frac{1}{2}(\Phi,H\Phi)_{\Omega^0}}\Phi_x\otimes\ol{\Phi_y} \right]}{\Ex^h\left[e^{-\frac{1}{2}(\Phi,H\Phi)_{\Omega^0}}\right]}=\Ex^{h,H}\Big[\Phi_x\otimes\ol{\Phi_y}\Big]
\,,\]
which equals $\int_{\nu_{x,y}} \left[\hol_{h,H}(\gamma^{-1})\right]$ by \eqref{covariance-holonomy}. The result follows immediately. 
\end{proof}

Let us explain how successive specialisations of this theorem will allow us to recover Dynkin's isomorphism in its classical version. Let us consider first the case where $H$ is a scalar potential, that is, the case where for each vertex $x\in \V$, the operator $H_{x}$ has a unique eigenvalue, which we also denote by $H_{x}$. In that case, according to Lemma \ref{hol-tloc}, the twisted holonomy along a continuous path can be expressed in terms of its classical holonomy and an exponential functional of its local time, and \eqref{Dynkin} becomes
\[\Ex^h \otimes \mbox{$\int_{\nu_{x,y}}$} \left[e^{-\frac{1}{2}\sum_{x\in \V} \lambda_{x} H_{x}( \|\Phi_{x}\|^{2}+\ell_{x}(\gamma))}\hol_{h}(\gamma^{-1})\right]=\Ex^{h}\left[e^{-\frac{1}{2}\sum_{x\in \V} \lambda_{x} H_{x} \|\Phi_{x}\|^{2}} \Phi_{x}\otimes \ol{\Phi_{y}}\right]. \]
Going one step further in the specialisation, let us apply this formula to the case of the trivial real bundle of rank $1$ endowed with the trivial connection. In that case, $\Phi$ is a random real valued function on $\V\setminus\W$ and we find one of the classical formulations of Dynkin's isomorphism, namely
\[\Ex \otimes \mbox{$\int_{\nu_{x,y}}$}\left[e^{-\frac{1}{2}\sum_{x\in \V} \lambda_{x} H_{x}(|\Phi_{x}|^{2}+\ell_{x}(\gamma))}\right]=\Ex\left[e^{-\frac{1}{2}\sum_{x\in \V} \lambda_{x} H_{x} |\Phi_{x}|^{2}} \Phi_{x} \Phi_{y}\right].\]

It appears that Dynkin's theorem is one of these deep results that, with the benefit of a few decades of hindsight and the appropriate set of preliminary definitions, become almost tautological. Nevertheless, from our point of view, it remains definitely not trivial in that it expresses the covariance of the Gaussian free vector field as a weighted sum of holonomies along paths in our graph. For example, if we pick a vertex $x$, then the covariance matrix of the random vector~$\Phi_{x}$ in the vector space $\F_{x}$ is given by
\[\Ex^{h,H}\Big[\Phi_{x}\otimes \ol{\Phi_{x}}\Big]=\int_{\CL(\G)} \hol_{h,H}(\gamma^{-1}) \; \d\nu_{x,x}(\gamma) \in \End(\F_{x}).\]
Let us make a few comments on this expression. Firstly, the left-hand side makes it clear that the endomorphism of $\F_{x}$ which both sides of this equality express is Hermitian and non-negative. If $H=0$, the Hermitian character of the right-hand side can be easily recognised from the invariance of the measure $\nu_{x,x}$ by inversion, by writing
\[\int_{\CL(\G)} \hol_{h}(\gamma^{-1}) \; \d\nu_{x,x}(\gamma)=\int_{\CL(\G)} \frac{1}{2}\left(\hol_{h}(\gamma)+ \hol_{h}(\gamma)^{-1}\right) \; \d\nu_{x,x}(\gamma).\]

Secondly, let us compute an example of this formula. Let $\G$ be the graph with two vertices: $x$ and $w$, and two (geometric) edges: one looping edge $e$ based at $x$, and one edge joining $x$ to $w$. Of course, the well of this graph is the singleton $\W=\{w\}$. 
\begin{figure}[h!]
\begin{center}
\includegraphics{grapheauneboucle}
\caption{\small A very simple graph.}
\end{center}
\end{figure}

Let $\chi_{e}$ denote the conductance of $e$, and $\kappa_{x}$ the conductance of the edge that joins $x$ to $w$. A computation similar to the one that we did in the proof of Proposition \ref{prop:shearing} yields
\[\int_{\CL(\G)} \hol_{h}(\gamma^{-1}) \; \d\nu_{x,x}(\gamma)=(\chi_{e}(2\Id_{\F_{x}}-(h_{e}+h_{e}^{-1}))+\kappa_{x} \Id_{\F_{x}})^{-1}.\]
As we explained there, this operator can be made, by appropriate choices of $\chi_{e}$, $\kappa_{x}$ and $h_{e}$, equal to any positive Hermitian operator on $\F_{x}$. This shows that the presence of the connection can make the Gaussian free vector field as anisotropic as one wishes at a given vertex, and that this anisotropy is an effect of the non-triviality of the holonomy of the connection. 

\subsection{Eisenbaum's isomorphism} We will now state a generalisation of Eisenbaum's isomorphism. Our understanding is that this theorem gives a probabilistic expression in terms of the Gaussian free field of the difference between the inverses of the Laplacians associated to a same connection but to different potentials (see \eqref{diffinvlap} below).

\begin{theorem}\label{thm:Eisenbaum} Let $\G$ be a weighted graph with a well. Let $\F$ be a Hermitian or Euclidean vector bundle over $\G$. Let $h$ be a connection on $\F$ and $H$ a potential on $\F$. Let $f$ be a section of $\F$. Then the following identity holds in $\Omega^{0}(\V\setminus\W,\F)$:
\begin{equation}\label{Eisenbaum}
\Ex^{h} \otimes \mbox{$\int_{\nu}$}\left[e^{-\frac{1}{2}\left(\Phi+f, H (\Phi+f)\right)_{\Omega^{0}}}\hol_{h,H}(\gamma^{-1})\right]\Delta_{h}f=\Ex^{h}\left[e^{-\frac{1}{2}\left(\Phi+f, H (\Phi+f)\right)_{\Omega^{0}}}(\Phi+f)\right].
\end{equation}
More concretely, for all vertex $x$, we have
\begin{equation}\label{Eisenbaum2}
\sum_{y\in \V\setminus \W}\lambda_{y} \int_{\nu_{x,y}} \hol_{h,H}(\gamma^{-1})\big((\Delta_{h}f)(y)\big) \; \d\nu_{x,y}(\gamma)=\frac{\Ex^{h}\left[e^{-\frac{1}{2}\left(\Phi+f, H (\Phi+f)\right)_{\Omega^{0}}}(\Phi_{x}+f_{x})\right]}{\Ex^{h}\left[e^{-\frac{1}{2}\left(\Phi+f, H (\Phi+f)\right)_{\Omega^{0}}}\right]}.
\end{equation}
\end{theorem}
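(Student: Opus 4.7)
The plan is to compute both sides of \eqref{Eisenbaum} separately and observe that they both equal $Z\cdot \Delta_{h,H}^{-1}\Delta_h f$, where $Z=\Ex^h[e^{-\frac{1}{2}(\Phi+f,H(\Phi+f))_{\Omega^0}}]$ is the scalar normalising constant appearing in the denominator of \eqref{Eisenbaum2}. The pointwise statement \eqref{Eisenbaum2} will then drop out by evaluating the resulting equality of sections at the vertex $x$ and unfolding the definition of the Green section.

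For the left-hand side, the random section $\Phi$ (distributed under $\P^h$) and the random path $\gamma$ (integrated against $\nu$) are independent, so the integrand factorises as the product of a scalar function of $\Phi$ and an operator-valued function of $\gamma$. Pulling the scalar factor out and invoking the master identity \eqref{masterDE} of Proposition \ref{propholdeltanu}, the left-hand side of \eqref{Eisenbaum} equals
\[Z\cdot \Bigl(\int_{\CP(\G)} \hol_{h,H}(\gamma^{-1})\, d\nu(\gamma)\Bigr)\Delta_h f \;=\; Z\cdot \Delta_{h,H}^{-1}\Delta_h f.\]

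For the right-hand side, the key step is a Gaussian completion of the square, in exactly the spirit of the proof of Proposition \ref{useful-gaussian-proposition}. Combining $-\tfrac12(\phi,\Delta_h \phi)_{\Omega^0}$ coming from the density of $\P^h$ with $-\tfrac12(\phi+f,H(\phi+f))_{\Omega^0}$ coming from the tilting factor, and using the self-adjointness of $H$, the total exponent rewrites as
\[-\tfrac12 \bigl(\phi+\Delta_{h,H}^{-1}Hf,\; \Delta_{h,H}(\phi+\Delta_{h,H}^{-1}Hf)\bigr)_{\Omega^0}\,+\,c(f),\]
for some $\phi$-independent constant $c(f)$. Hence, under the probability measure proportional to $e^{-\frac{1}{2}(\Phi+f,H(\Phi+f))_{\Omega^0}}\,d\P^h$, the field $\Phi$ is Gaussian with mean $-\Delta_{h,H}^{-1}Hf$. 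The expectation of $\Phi+f$ under this tilted law is therefore
\[f-\Delta_{h,H}^{-1}Hf \;=\; \Delta_{h,H}^{-1}(\Delta_{h,H}-H)f \;=\; \Delta_{h,H}^{-1}\Delta_h f,\]
and multiplying through by $Z$ delivers the right-hand side of \eqref{Eisenbaum}.

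The only substantive step is the Gaussian completion of squares; everything else is bookkeeping, the probabilistic and analytic sides being tied together by the algebraic identity $\Delta_{h,H}^{-1}\Delta_h=\Id-\Delta_{h,H}^{-1}H$. To extract \eqref{Eisenbaum2} from the sectional equality, one evaluates at $x\in \V$, expands $(\Delta_{h,H}^{-1}\Delta_h f)(x)=\sum_{y\in \V}(\Delta_{h,H}^{-1})_{x,y}(\Delta_h f)(y)$, and uses the identity $(\Delta_{h,H}^{-1})_{x,y}=\lambda_y(G_{h,H})_{x,y}=\lambda_y\int\hol_{h,H}(\gamma^{-1})\,d\nu_{x,y}(\gamma)$ from Proposition \ref{propholdeltanu} together with the definition of the Green section. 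The mildest care required is that the tilted Gaussian be well defined, which amounts to the standing assumption that $\Delta_{h,H}$ is positive (i.e. $H>-\sigma_h$), under which $Z>0$ and all manipulations above are legitimate.
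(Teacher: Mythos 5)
Your proof is correct, and it reaches the same intermediate object as the paper --- both sides are shown to equal $Z\,\Delta_{h,H}^{-1}\Delta_h f$, with the path side handled exactly as in the paper via the factorisation of the product measure and the identity \eqref{masterDE}. Where you differ is in the Gaussian step: the paper first writes $\Delta_{h,H}^{-1}\Delta_h f(x)=\Ex^{h,H}[(\Phi,\Delta_h f)_{\Omega^0}\Phi_x]$ using Proposition \ref{covariancebrute}, then invokes a general Gaussian tilting lemma ($\Ex[\ol{Y}X]=\Ex[e^{\Re Y}X]/\Ex[e^{\Re Y}]$) under $\P^{h,H}$, and finally transports the resulting ratio back to $\P^{h}$ by the affine substitution $\Phi\mapsto\Phi+f$; you instead work directly under the tilted measure $e^{-\frac12(\Phi+f,H(\Phi+f))_{\Omega^0}}\,d\P^h$, complete the square (the same manipulation as in Proposition \ref{useful-gaussian-proposition}) to identify it as a Gaussian law with covariance $\Delta_{h,H}^{-1}$ and mean $-\Delta_{h,H}^{-1}Hf$, and read off $\Ex[\Phi+f]=\Delta_{h,H}^{-1}\Delta_h f$. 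The two computations are equivalent in substance, but your version is somewhat more direct and makes immediately visible the point the paper only remarks on, namely that the right-hand side of \eqref{Eisenbaum2} is in fact linear in $f$; the paper's route, by passing through $\P^{h,H}$ and the abstract lemma, keeps the covariance interpretation $\Ex^{h,H}[\Phi\otimes\ol{\Phi}]=\Delta_{h,H}^{-1}$ in the foreground. Your closing remark about needing $\Delta_{h,H}$ positive (i.e.\ $H>-\sigma_h$) for the tilted Gaussian and for \eqref{masterDE} is well taken: the theorem's statement does not display this hypothesis, but it is implicit throughout the paper (it is exactly the condition under which $\Delta_{h,H}^{-1}$ and the integrals against $\nu$ converge), and the paper's own proof needs it equally, since $\P^{h,H}$ must be a well-defined probability measure. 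The extraction of \eqref{Eisenbaum2} via $(\Delta_{h,H}^{-1})_{x,y}=\lambda_y(G_{h,H})_{x,y}$ and \eqref{masterDE1} is also correct.
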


Before we prove the theorem, let us make a few comments. The first remark is that, although the left-hand side of \eqref{Eisenbaum2} depends in a manifestly linear way on $f$, it is not clear that the right-hand side does. This is however the case, as will be clear from the proof. This feature is shared by Eisenbaum's original statement, which we shall deduce from our theorem later.

A second remark is that one of the prominent features of Eisenbaum's theorem, namely the use of unconditioned measures on the trajectories of the Markov process, seems to have disappeared in our statement. We shall explain below how to fill this apparent gap between our formulation and Eisenbaum's original statement. 

A third remark is that, in the right-hand side of \eqref{Eisenbaum2}, $f$ can be taken out of the expectation, thus producing, thanks to \eqref{masterDE}, a statement of the form 
\[\Delta_{h,H}^{-1}\Delta_{h}f=f+\ldots\]
where the dots hide a non-obviously linear functional of $f$. Applying this formula not to $f$ but to $\Delta_{h}^{-1}f$, we find
\begin{equation}\label{diffinvlap}
\Delta^{-1}_{h,H}f=\Delta^{-1}_{h}f+\frac{\Ex^{h}\big[e^{-\frac{1}{2}(\Phi+\Delta^{-1}_{h}f,H(\Phi+\Delta^{-1}_{h}f))_{\Omega^{0}}}\Phi\big]}{\Ex^{h}\big[e^{-\frac{1}{2}(\Phi+\Delta^{-1}_{h}f,H(\Phi+\Delta_{h}^{-1}f))_{\Omega^{0}}}\big]},
\end{equation}
the announced probabilistic expression of the difference between the inverses of two Laplacians.

Let us now turn to the proof of the theorem. 

\begin{proof} Using successively \eqref{masterDE} and Proposition \ref{covariancebrute}, we find
\[\bigg(\int_{\CP(\G)}\hol_{h,H}(\gamma^{-1}) \; d\nu(\gamma)\Delta_{h}f\bigg)(x)=\Delta_{h,H}^{-1}(\Delta_{h}f)(x)=\Ex^{h,H}\left[(\Phi, \Delta_{h}f)_{\Omega^{0}}\Phi_{x}\right].\]
We now use the following Gaussian lemma. Let $(X,Y)$ be an $(r+1)$-dimensional Gaussian vector such that $X$ is $r$-dimensional and $Y$ is $1$-dimensional. Assume that the vector $(X,Y)$ is symmetric, that is, centred in the real case, and, in the complex case, such that $(e^{i\theta}X,e^{i\theta}Y)$ has the same distribution as $(X,Y)$ for every real $\theta$. Then 
\[\Ex[\ol{Y}X]=\frac{\Ex[e^{\Re Y}X]}{\Ex[e^{\Re Y}]}.\]
We apply this lemma $X=\Phi_{x}$ and $Y=(\Delta_{h}f,\Phi)_{\Omega^{0}}$. The Gaussian vector $(X,Y)$ is symmetric because $\Phi$ and $-\Phi$ have the same distribution and, in the complex case, the same distribution as $e^{i\theta}\Phi$ for all real $\theta$. We find
\[\bigg(\int_{\CP(\G)}\hol_{h,H}(\gamma^{-1}) \; \d\nu(\gamma)\Delta_{h}f\bigg)(x)=\frac{\Ex^{h,H}\left[e^{ \Re ( \Phi, \Delta_{h}f)_{\Omega^{0}}}\Phi_{x} \right]}{\Ex^{h,H}\left[e^{ \Re( \Phi, \Delta_{h}f)_{\Omega^{0}}}\right]}.\]
Replacing, by an affine change of variables, $\Phi$ by $\Phi+f$ in the numerator and the denominator, we find that this quotient is equal to  
\[\frac{\Ex^{h,H}\left[e^{- \Re(\Phi, Hf)_{\Omega^{0}}}(\Phi_{x}+f_{x}) \right]}{\Ex^{h,H}\left[e^{- \Re( \Phi, Hf)_{\Omega^{0}}}\right]}=\frac{\Ex^{h}[e^{-\frac{1}{2}(\Phi+f,H(\Phi+f))_{\Omega^{0}}}(\Phi_{x}+f_{x}) ]}{\Ex^{h}[e^{-\frac{1}{2}(\Phi+f,H(\Phi+f))_{\Omega^{0}}}]},\]
as expected.
\end{proof}

The next lemma will help us to bridge the gap between Theorem \ref{thm:Eisenbaum} and Eisenbaum's original isomorphism. In order to state it, recall from \eqref{eq:deftaugamma} the following notation: for every continuous path $\gamma$, 
\[\tau(\gamma)=\inf\{t\geq 0 : \gamma_{t}\in \W\}\]
is the hitting time of the well by $\gamma$. 

\begin{lemma}\label{T pas T} Let $x$ be a proper vertex of $\G$. Let $F$ be a non-negative measurable function on~$\CP(\G)$. Then
\[\sum_{y\in \V\setminus\W}\kappa_{y} \int_{\CP(\G)} F(\gamma) \; \d\nu_{x,y}(\gamma)=\int_{\CP(\G)}F(\gamma_{|[0,\tau(\gamma))}) \; \d\P_{x}(\gamma).\]
\end{lemma}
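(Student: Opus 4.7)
The plan is a direct computation that unfolds both sides using the explicit formula \eqref{defPx} for $\P_x$ and matches them term-by-term.

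First I would rewrite the left-hand side. Plugging in Definition~\ref{defnu}, using that $\sum_{y\in\V}\kappa_y\1_{\{\gamma_t=y\}}/\lambda_y = (\kappa_{\gamma_t}/\lambda_{\gamma_t})\1_\V(\gamma_t)$, and exchanging summations and integrals yields
\[\sum_{y\in\V}\kappa_y \int_{\CP(\G)}F(\gamma)\,d\nu_{x,y}(\gamma) = \int_{\CP(\G)}\int_0^\infty F(\gamma_{|[0,t]})\,\frac{\kappa_{\gamma_t}}{\lambda_{\gamma_t}}\1_\V(\gamma_t)\,dt\,d\P_x(\gamma).\]
Under $\P_x$, paths almost surely have infinite lifetime and eventually hit the well, so $\1_\V(\gamma_t)=\1_{[0,T_\gamma)}(t)$ almost surely.

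Next I would expand both sides via \eqref{defPx}. For the LHS, fix an underlying discrete path $p=(x_0,e_1,\ldots,e_n,x_n)$ with $x_0=x$ and $x_n\in\W$, and split the $t$-integral along the consecutive holding intervals $[T_k,T_{k+1})$ with $T_k=\tau_0+\cdots+\tau_{k-1}$. On each such interval $\gamma_t=x_k\in\V$ and $\gamma_{|[0,t]}=((x_0,\tau_0),\ldots,e_k,(x_k,t-T_k))$; a change of variable $s=t-T_k$, Fubini, and the elementary identity $\int_0^\infty e^{-\tau_k}\int_0^{\tau_k}g(s)\,ds\,d\tau_k=\int_0^\infty g(s)e^{-s}\,ds$ isolate the dependence of the integrand on $(\tau_0,\ldots,\tau_{k-1},s)$ alone. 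The remaining sum over the continuations $(e_{k+1},\ldots,x_n\in\W)$ starting from $x_k$ yields the total mass $\Q_{x_k}(\DP(\G))=1$, so the LHS collapses to
\[\sum_{k\ge 0}\sum_{\substack{(x_0,e_1,\ldots,x_k)\\x_0=x,\,x_0,\ldots,x_k\in\V}}\prod_{j=0}^{k-1}P_{x_j,e_{j+1}}\cdot\frac{\kappa_{x_k}}{\lambda_{x_k}}\int_{(0,\infty)^{k+1}}F((x_0,\tau_0),\ldots,(x_k,\tau_k))e^{-\tau_0-\cdots-\tau_k}\,d\tau_0\cdots d\tau_k.\]

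Unfolding the RHS in the same way, and following the remark preceding the lemma that $\gamma_{|[0,T_\gamma]}$ is the path in $\G_{|\V}$ joining $x$ to $\gamma_{T_\gamma^-}\in\partial\V$, I would observe that the final edge $e_n$ (with $\ol{e_n}\in\W$) contributes to the integrand only through $\sum_{e_n:\ul{e_n}=x_{n-1},\,\ol{e_n}\in\W}P_{x_{n-1},e_n}=\kappa_{x_{n-1}}/\lambda_{x_{n-1}}$. After reindexing $k=n-1$, the RHS matches the LHS term by term. The only real hurdle is the bookkeeping of indices together with the interpretation of $\gamma_{|[0,T_\gamma]}$ at its endpoint, where the holding time at the well is degenerate; once this convention is fixed as in the text, the proof is a mechanical Fubini calculation relying solely on the facts that $\Q_y(\DP(\G))=1$ for every $y\in\V$ and that $\kappa_y=\sum_{e:\ul e=y,\,\ol e\in\W}\chi_e$.
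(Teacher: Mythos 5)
Your proposal is correct and follows essentially the same route as the paper's proof: both sides are unfolded via \eqref{defPx}, the continuation of the path after time $t$ is integrated out using $\Q_{y}(\DP(\G))=1$, the truncated holding time is handled by a change of variables, and the factor $\kappa_{y}/\lambda_{y}=\sum_{e:\ul{e}=y,\ol{e}\in\W}P_{y,e}$ is absorbed into the weight of the final jump into the well. Your explicit treatment of the interval-splitting and of the endpoint convention for $\gamma_{|[0,T_{\gamma}]}$ only makes more detailed what the paper's two-display computation does implicitly.
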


\begin{proof} Let us compute the left-hand side. It is equal to
\begin{align*}
&\sum_{y\in \V\setminus\W}\frac{\kappa_{y}}{\lambda_{y}} \int_{0}^{\infty}e^{-t} \sum_{n=0}^{\infty} \sum_{(x,e_{1},x_{1},\ldots,x_{n-1},e_{n},y)\in \DP_{n}(\G)} P_{x,e_{1}}P_{x_{1},e_{2}}\ldots P_{x_{n-1},e_{n}}\\
&\hspace{4cm} \int_{0<t_{1}<\ldots<t_{n}<t} F((x,t_{1}),e_{1},(x_{1},t_{2}-t_{1}),\ldots,(y,t-t_{n}))\; \d t_{1}\ldots \d t_{n},
\end{align*}
that is, to
\[\sum_{\substack{(x,e_{1},\ldots,y)\in \DP_{n}(\G)\\e\in \E : \ul{e}=y,\ol{e}\in \W}} \Q_{x}((x,e_{1},\ldots,y,e,\ol{e}))\int_{(0,+\infty)^{n+1}}F((x,\tau_{0}),e_{1},\ldots,(y,\tau_{n}))e^{-\tau_{0}-\ldots-\tau_{n}}\; \d\tau_{0}\ldots \d\tau_{n},
\]
in which we recognise the right-hand side.
\end{proof}

Using this lemma, we will now state a corollary of Theorem \ref{thm:Eisenbaum} from which it will be easy to deduce the classical statement.

Let us introduce the operator $K$ on $\Omega^{0}(\V\setminus \W,\F)$ defined by
\[(K f)(x)=\kappa_{x}f(x).\]

\begin{corollary}\label{cor:Eisenbaum} Let $\F$ be a Hermitian vector bundle over a weighted graph with a well $\G$. Let $h$ be a connection on $\F$ and $H$ a potential on $\F$. Let $x$ be a vertex of $\G$. Let $b$ be a section of $\F$ over $\partial(\V\setminus \W)$. Define a global section $f$ of $\F$ over $\V$ by setting
\[f=G_{h}Kb.\]
Then the following identity holds in $\F_x$:
\begin{equation}\label{Eisenbaum3}
\Ex^{h}\otimes \Ex_{x}\left[e^{-\frac{1}{2}\left(\Phi+f,H(\Phi+f)\right)_{\Omega^{0}}} \hol_{h,H}(\gamma_{|[0,\tau(\gamma))}^{-1})(b) \right]=\Ex^{h}\left[e^{-\frac{1}{2}\left(\Phi+f,H(\Phi+f)\right)_{\Omega^{0}}}(\Phi_{x}+f_{x})\right].
\end{equation}
\end{corollary}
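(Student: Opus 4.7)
The plan is to derive the corollary as a direct specialisation of Theorem~\ref{thm:Eisenbaum}, combined with Lemma~\ref{T pas T} which converts the sum over $y\in\V$ weighted by $\kappa_{y}$ into a genuine $\P_x$-expectation over paths run up to their hitting time of the well.

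First I would extend $b$ by zero to $\V$, so that $Kb\in\Omega^{0}(\V,\F)$ is supported on $\partial\V$, and observe that by the very definition $G_{h}=(\Lambda\circ\Delta_{h})^{-1}$, the section $f=G_{h}Kb$ satisfies
\begin{equation*}
\Delta_{h}f=\Lambda^{-1}Kb,\qquad\text{i.e.}\qquad \lambda_{y}(\Delta_{h}f)(y)=\kappa_{y}b(y)\ \ \text{for all }y\in\V.
\end{equation*}
In particular $\lambda_{y}(\Delta_{h}f)(y)$ vanishes outside $\partial\V$.

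Next I would plug this $f$ into Theorem~\ref{thm:Eisenbaum} in the form \eqref{Eisenbaum2}. The left-hand side of \eqref{Eisenbaum2} becomes
\begin{equation*}
\sum_{y\in\V}\kappa_{y}\int_{\CP(\G)}\hol_{h,H}(\gamma^{-1})\bigl(b(y)\bigr)\,d\nu_{x,y}(\gamma),
\end{equation*}
a quantity taking values in $\F_{x}$ (each summand makes sense because $\gamma$ is $\nu_{x,y}$-a.e.\ a path from $x$ to $y$, so $\hol_{h,H}(\gamma^{-1})\in\Hom(\F_{y},\F_{x})$). Applying Lemma~\ref{T pas T} componentwise (in the fixed vector space $\F_{x}$) to the function $F(\gamma)=\hol_{h,H}(\gamma^{-1})\bigl(b(\ol{\gamma})\bigr)$, this sum is recognised as $\Ex_{x}\bigl[\hol_{h,H}(\gamma_{|[0,T_{\gamma}]}^{-1})(b)\bigr]$, where as in the statement $b$ is implicitly evaluated at the last proper vertex visited, $\gamma_{T_{\gamma}^{-}}\in\partial\V$. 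The restriction of the summation to $\partial\V$ in the previous display matches the fact that $\gamma_{T_{\gamma}^{-}}$ almost surely lies in $\partial\V$.

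Finally, multiplying both sides of \eqref{Eisenbaum2} by the Gaussian normaliser $\Ex^{h}\bigl[e^{-\frac{1}{2}(\Phi+f,H(\Phi+f))_{\Omega^{0}}}\bigr]$ and commuting it past the $\nu$-integral (which is legitimate because the $\nu_{x,y}$-integrals are finite and the Gaussian factor is bounded), one obtains \eqref{Eisenbaum3}. The only delicate bookkeeping is keeping track of the fibres on which the various operators act and checking that the equalities of measures $\lambda_{y}(\Delta_{h}f)(y)=\kappa_{y}b(y)$ and the identification of the hitting-time integral via Lemma~\ref{T pas T} hold with values in the appropriate vector spaces; I expect no conceptual obstacle beyond that.
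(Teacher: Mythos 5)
Your proof is correct and takes essentially the same route as the paper's: the identity $\Delta_{h}f=\Lambda^{-1}Kb$ (i.e.\ $\lambda_{y}(\Delta_{h}f)(y)=\kappa_{y}b(y)$), Lemma~\ref{T pas T} applied to $F(\gamma)=\hol_{h,H}(\gamma^{-1})\bigl(b(\ol{\gamma})\bigr)$, and Theorem~\ref{thm:Eisenbaum}. The only cosmetic difference is that the paper phrases the intermediate quantity as $(G_{h,H}Kb)(x)=\Delta_{h,H}^{-1}\Delta_{h}f$ via \eqref{covariance-holonomy} and \eqref{masterDE}, whereas you work directly with \eqref{Eisenbaum2}; the content is identical.
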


Note that the path $\gamma_{[0,\tau(\gamma))}^{-1}$ does not start from a vertex of the well, but instead from the last proper vertex visited by $\gamma$ before hitting the well. This last visited proper vertex belongs to the set $\partial(\V\setminus \W)$, which is precisely the set where $b$ is defined.

\begin{proof} According to Lemma \ref{T pas T} and \eqref{covariance-holonomy},
\[\Ex_{x}[ \hol_{h,H}(\gamma_{|[0,\tau(\gamma))})^{-1})(b)]=(G_{h,H} K b)(x)=\Delta_{h,H}^{-1}\Delta_{h}f,\]
which, given \eqref{masterDE}, is computed by Theorem \ref{thm:Eisenbaum}.
\end{proof}

Just as in the case of Dynkin's isomorphism, this result can be specialised to the original version of Eisenbaum's isomorphism. Assume that $H$ is scalar in each fibre $\F_{x}$, equal to $H_{x}\Id_{\F_{x}}$. In that case, observing that the full trajectory of the random walk and the trajectory stopped at the hitting time of the well have the same local time at every vertex of $\V$, \eqref{Eisenbaum3} becomes
\begin{align*}
&\Ex^{h}\otimes\Ex_{x}\left[e^{-\frac{1}{2}\sum_{x\in \V\setminus\W} \lambda_{x} H_{x}( \|\Phi_{x}+f_{x}\|^{2}+\ell_{x}(\gamma))}\hol_{h}(\gamma_{|[0,\tau(\gamma))}^{-1})(b)\right]\\
&\hspace{7cm}=\Ex^{h}\left[e^{-\frac{1}{2}\sum_{x\in \V\setminus\W} \lambda_{x} H_{x} \|\Phi_{x}+f_{x}\|^{2}} (\Phi_{x}+f_{x})\right].
\end{align*}

Assume now that the fibre bundle is the trivial real bundle of rank $1$ and the connection is the trivial connection. Let $b$ be the section, that is, the function, which is identically equal to a real number $s$ on $\partial (\V\setminus \W)$. In that case, $f=GKb=sG\kappa=s\1_{\V}$, the function identically equal to $s$ on $\V\setminus \W$. Then the formula above becomes
\[\Ex\otimes\Ex_{x}\left[e^{-\frac{1}{2}\sum_{x\in \V\setminus\W} \lambda_{x}H_{x}((\Phi_{x}+s)^{2}+\ell_{x}(X))}\right]s=\Ex\left[e^{-\frac{1}{2}\sum_{x\in \V\setminus\W} \lambda_{x}H_{x}  (\Phi_{x}+s)^{2}} (\Phi_{x}+s)\right].\]
Provided $s$ is not zero, dividing by $s$ yields one of the classical formulations of Eisenbaum's isomorphism.
\medskip

Let us finally explain how the results of this section can be understood in terms of the resolution of a Dirichlet problem in $\G$. 

Recall from the beginning of Section \ref{sec:Laplacians} that there exists an uncompressed Laplacian $\ol{\Delta}_{h}=d^{*}\circ d$ acting on the space $\Omega^{0}(\V,\F)$ of all sections of $\F$, even those that do not vanish on the well. Adding a potential $H$ to this Laplacian, we obtain the operator $\ol{\Delta}_{h,H}=\ol{\Delta}_{h}+H$.

Suppose that we are given a section $w\in \Omega^{0}(\W,\F)$ of $\F$ over the well and we want to solve the Dirichlet problem with this boundary condition, that is, to find a section $f\in \Omega^{0}(\V,\F)$ of $\F$ such that
\begin{equation}\label{probDiri}
\left\{\begin{array}{rl} \ol{\Delta}_{h,H} f=0 & \mbox{on } \V\setminus \W,\\
f=w & \mbox{on } \W.
\end{array}\right.
\end{equation}
Assuming that $H$ vanishes on the well as usual, the Laplacians $\Delta_{h,H}$ and $\ol{\Delta}_{h,H}$ agree everywhere but on the rim and for every $x\in \partial (\V\setminus \W)$, we have 
\[\ol{\Delta}_{h,H} f(x)=\Delta_{h,H}f(x)-\sum_{\ul{e}=x, \ol{e}\in \W} h_{e}^{-1} f(\ol{e}).\]
Thus, $f$ solves the Dirichlet problem \eqref{probDiri} if and only if it is satisfies
\begin{equation}\label{probDiri2}
\Delta_{h,H}f(x)=\left\{\begin{array}{ll} 0 & \mbox{if } x\in (\V\setminus \W)\setminus \partial(\V\setminus \W),\\
b & \mbox{if } x\in \partial(\V\setminus \W),
\end{array}\right.
\end{equation}
where $b$ is the section of $\F$ over $\partial \V$ defined by 
\[b(x)=\sum_{\ul{e}=x, \ol{e}\in \W} h_{e}^{-1} w(\ol{e}).\]
It appears that solving the Dirichlet problem for the uncompressed Laplacian in the graph with a boundary condition on the well amounts to applying the inverse of the compressed Laplacian to a certain section of $\F$ over the rim. 
 
It is a classical fact that the solution of the Dirichlet problem in a domain with a certain boundary condition is the average of the Gaussian free field on this domain conditioned to satisfy this boundary condition. The relation \eqref{diffinvlap} extends this result in the following sense: provided one knows how to solve the Dirichlet problem associated with the Laplacian $\Delta_{h}$ for a certain boundary condition, this corollary gives an expression in terms of the Gaussian free field of the solution of the Dirichlet problem with the same boundary condition but for any of the generalised Laplacians $\Delta_{h,H}$.

\section{Isomorphism theorems of Le Jan and Sznitman}\label{sec:LJS}

In this section, we continue our investigation of the way in which classical isomorphism theorems can be extended to the covariant setting and we turn to Le Jan's and Sznitman's isomorphism theorems, which relate the magnitude of the Gaussian free vector field to local times of Poissonian ensembles of loops and paths.

Just as our approach to the generalisation of the isomorphism theorems of Dynkin and Eisenbaum was based on one main equality, namely \eqref{masterDE}, the results of this section will ultimately be based on \eqref{masterLJS}. In particular, the measure $\mu$ in its various forms (see Definition \ref{defmu}) will play a prominent role in this study.

However, \eqref{masterLJS} turns out to be more difficult to use than \eqref{masterDE} and we use instead the traced version of \eqref{masterLJS}, namely \eqref{masterLJStr}, which we recall here for the convenience of the reader:
\begin{equation}\label{masterLJStr2}
\int_{\CL(\G)}\left(\Tr\,\hol_{h,H}(\gamma^{-1})-\Tr\,\hol_{h}(\gamma^{-1})\right)\d\mupl(\gamma)=\log\frac{\det\Delta_{h}}{\det\Delta_{h,H}}\,. 
\end{equation}

Using the definition of the measure $\mupl$, this equality can be written

\begin{equation}\label{masterLJStr3}
\sum_{x\in \V}\int_{0}^{\infty}\! \int_{\CL(\G)}\Big(\Tr_{\F_{x}}\big(\hol_{h,H}(\gamma_{|[0,t)}^{-1})\big)-\Tr_{\F_{x}}\big(\hol_{h}(\gamma_{|[0,t)}^{-1})\big)\Big)\1_{\{\gamma_{t}=x\}}\; \d\P_{x}(\gamma) \frac{\d t}{t}=\log \frac{\det \Delta_{h}}{\det \Delta_{h,H}}.
\end{equation}

\subsection{Overview of the approach}

Let us explain, in the Euclidean case, and with Le Jan's theorem in mind, how we are going to use \eqref{masterLJStr2}. Firstly, we are going to exponentiate it and recognise, in the right-hand side, the expectation $\Ex^{h}\big[e^{-\frac{1}{2}(\Phi, H\Phi)_{\Omega^{0}}}\big]$. The problem is to interpret the left-hand side
\begin{equation}\label{f-g}
\exp \int_{\CL(\G)}\left(\Tr\,\hol_{h,H}(\gamma^{-1})-\Tr\,\hol_{h}(\gamma^{-1})\right)\d\mupl(\gamma).
\end{equation}
A probabilistic interpretation of this quantity relies on Campbell's formula for Poisson point processes, which goes as follows. Given a diffuse $\sigma$-finite Borel measure $m$ on a Polish space $\mathcal X$, let us denote by 
$X$ the Poisson point process on $\mathcal X$ with intensity $m$. Then the Campbell formula asserts that for every Borel function $f$ on $\mathcal X$ such that $f\geq 1$, 
\[\Ex\Big[\prod_{x\in X} f(x)\Big]=\exp{\int_{\mathcal X}(f-1)\; \d m}.\]

In order to put \eqref{masterLJStr2} in the form of Cambpell's formula, we will proceed as follows. Firstly, we will use the fact that the measure $\mupl$ is invariant under the path reversal map $\gamma\mapsto \gamma^{-1}$ (see Lemma \ref{reversal}) and the fact that the twisted holonomy is turned into its adjoint by composition by the same map (see \eqref{inverse-adjoint}), to say that the imaginary part of the integral in \eqref{masterLJStr2} vanishes and that we can replace traces by their real parts.

The second step one would like to take is a simple algebraic manipulation leading  to the form
\begin{equation}\label{badequation}
\exp \int_{\CL(\G)}\left(\frac{\Re \Tr\,\hol_{h,H}(\gamma^{-1})}{\Re \Tr\,\hol_{h}(\gamma^{-1})}-1\right) \; \d({\Re \Tr\,\hol_{h}(\gamma^{-1})}\mupl).
\end{equation}
This form has two drawbacks. The first is that the quotient of real part of traces is difficult to interpret. With Le Jan's work \cite{LeJan-book} in mind, we would like to understand it as the exponential of some linear functional of $H$ and the local time of $\gamma$. However, the non-commutativity of the present setting makes it difficult to extract from this quotient the contribution of $H$. The second drawback is 
the fact that we are now integrating with respect to a signed measure. In certain specific situations, it so happens that the trace of the holonomy along any loop is non-negative; we will give an example of such a situation in Section \ref{sec:LJpositif}. But in general, we will 
simply write the signed measure as the difference of two positive measures, and use Campbell's formula for each of them. We will thus obtain a quotient of two instances of Campbell's formula. 

On the other hand, we offer a solution to the problem of the interpretation of the quotient of real parts of traces of holonomies. It consists in lifting the integral from the space of loops to a larger space, on which holonomies, or rather quotients of holonomies, become scalar quantities. This requires a detailed explanation, which will occupy us for the next few sections.

\subsection{Splitting of vector bundles} The main new piece of structure that we need in our study of Le Jan's and Sznitman's isomorphisms is a decomposition of each fibre of the vector bundle $\F$ as an orthogonal direct sum of linear subspaces.

\begin{definition} Let $\G$ be a graph. Let $\F$ be a vector bundle over $\G$. A \new{splitting} of $\F$, or \new{colouring} of $\F$, is a collection $\I=\{(I_{x},(\F^{i}_{x})_{i\in I_{x}}) : x\in \V\}$ in which, for each $x\in \V$, $I_{x}$ is a set and $\{\F^{i}_{x}:i\in I_{x}\}$ is a family of pairwise orthogonal linear subspaces of $\F_{x}$ such that
\begin{equation}
\F_x=\bigoplus_{i\in I_x}^{\perp} \F_x^i\,.
\end{equation}
For each $x\in \V$ and $i\in I_{x}$, we denote by $\pi_x^i:\F_{x}\to \F^{i}_{x}$ the orthogonal projection. 
\end{definition}

A special case of splitting is the trivial splitting, in which each fibre of $\F$ is simply written as being equal to itself. As trivial as it is, this splitting will be useful for us, and we will denote it by $\T=\{(\{x\},\F_{x}):x\in \V\}$.

Another special case is the case of complete splittings, in which each fibre is written as an orthogonal direct sum of lines. 
 
There is a natural partial ordering of the set of all splittings of a fibre bundle : we say that the splitting $\I$ is finer than the splitting $\I'$ if for each vertex $x$ and each $i\in I_{x}$, there exists $i'\in I'_{x}$ such that $\F^{i}_{x}\subset \F^{i'}_{x}$. The trivial splitting is the maximum of this order, and the complete splittings are its minimal elements.
 
We say that a splitting $\I$ and a potential $H$ on $\F$ are \new{adapted} to each other if for all $x\in \V$ and all $i\in I_{x}$, the space $\F^{i}_{x}$ is an eigenspace of $H_{x}$. In this case, we will denote by $H^{i}_{x}$ the unique eigenvalue of the restriction of $H_{x}$ to $\F^{i}_{x}$, so that for all $x\in \V$, we have
\[H_{x}=\sum_{i\in I_{x}}H^{i}_{x}\pi^{i}_{x}.\]

For example, to say that a potential is adapted to the trivial splitting means that it is scalar on each fibre. This is a kind of potential that we already considered twice, in order to specialise Theorems \ref{thm:Dynkin} and \ref{thm:Eisenbaum} respectively to the classical Dynkin and Eisenbaum isomorphisms.

To every potential $H$ we can associate its \new{eigensplitting}, the splitting of $\F$ obtained by writing each fibre $\F_{x}$ as the direct sum of the eigenspaces of $H_{x}$. This splitting is of course adapted to $H$ and it is, among all splittings adapted to $H$, the one that is maximal for the partial order that we just described. 

\subsection{Splitting of the Gaussian free vector field}\label{sec:splitGFF} A splitting of $\F$ allows us, among other things, to split the Gaussian free vector field on $\G$.

Let $h$ be a connection and $H$ be a potential on our vector bundle $\F$ over $\G$. Let $\Phi$ be the Gaussian free vector field on $\F$ associated to $h$ and $H$. Let $\I$ be a splitting of $\F$. For each vertex~$x$ and each $i\in I_{x}$, we define
\begin{equation}\label{defPhiix}
\Phi^{i}_{x}=\pi^{i}_{x}(\Phi_{x}).
\end{equation}
We meet here for the first time a new kind of field, namely $(\Phi^{i}_{x})_{x\in \V\setminus\W, i\in I_{x}}$, indexed not only by the vertices of $\G$, but also, at each vertex $x$, by elements of $I_{x}$, which we will call \new{colours}. Without giving too precise a meaning to this term, we will speak of a coloured field.

This coloured field $(\Phi^{i}_{x})_{x\in \V\setminus\W, i\in I_{x}}$ is a Gaussian random element of $\bigoplus_{x\in \V\setminus\W, i\in I_{x}}\F^{i}_{x}$. It is centred, and an application of \eqref{EphiG} allows us to compute its covariance: for all $x,y\in \V\setminus\W$ and all $i\in I_{x}$, $j\in I_{y}$,
\begin{equation}\label{covphicol}
\Ex^{h,H}\big[\Phi^{i}_{x} \otimes \ol{\Phi^{j}_{y}}\big]=\pi^{i}_{x}\circ (G_{h,H})_{x,y}\circ \pi^{j}_{y},
\end{equation}
an equality to be read in $\Hom(\F^{j}_{y},\F^{i}_{x})$.

The case where $\I$ is a complete splitting is of particular interest. Let us consider this case, and let us also assume that the set of colours $I_{x}$ is the same for each vertex $x\in \V$, namely $I_{x}=\{1,\ldots,r\}$. Let us finally choose a unit vector $u^{i}_{x}$ in each line $\F^{i}_{x}$. Then, for each $i\in \{1,\ldots,r\}$, we can define a scalar random field $\Phi^{i}$ on $\V$ by setting, for all $x\in \V$, and with a conflict of notation that we do not deem too serious,
\[\Phi^{i}_{x}=\langle u^{i}_{x},\Phi_{x}\rangle_{x}.\]
Each of the $r$ scalar fields $\Phi^{i},i\in\{1,\ldots,r\}$ is a centred Gaussian field and for all $i\in\{1,\ldots,r\}$, the covariance of the scalar field $\Phi^i$ is given, for all $x,y\in \V$, by
\[\Ex^{h,H}\left[\Phi^i_x\ol{\Phi^i_y}\right]=\langle u^i_x,\left(G_{h,H}\right)_{x,y} u^i_y\rangle_x\,.\]
Although this is clear from \eqref{covphicol}, we would like to stress that 
the fields $\Phi^{1},\ldots,\Phi^{r}$ are correlated, due to the presence of the connection $h$ and the potential $H$ (see Figure \ref{GFF-color}).

\begin{figure}
\centering
\begin{tabular}{c}
\includegraphics[width=8cm]{GFVF3}\\[-1cm]
\includegraphics[width=12cm]{GFVF3xyzLR}
\end{tabular}
\caption{\small A sample of a Gaussian free vector field $\Phi$ on a square grid of size $25$ endowed with a trivial real bundle of rank~$3$, along with its components $\Phi^{1},\Phi^{2},\Phi^{3}$. The connection is a fixed rotation in each coordinate axis.}\label{GFF-color}
\end{figure}

\subsection{Coloured paths, coloured loops and coloured local time} We are now going to explain how the choice of a splitting of the vector bundle allows us to define an enriched version of the space of loops on the graph. From the point of view of paths that we will adopt now, we prefer to think of each subspace of a fibre of $\F$ as a colour, and of the splitting itself as a colouring.

Let $\I$ be a colouring of $\F$, that is, a splitting of $\F$. We define the space $\CPc(\G)$ of \new{$\I$-coloured paths} on $\G$ as the set of all sequences
\[\eta=((x_{0},\tau_{0},i_{0}),e_{1},(x_{1},\tau_{1},i_{1}),\ldots,e_{n},(x_{n},\tau_{n},i_{n}))\]
such that
\[\gamma=((x_{0},\tau_{0}),e_{1},(x_{1},\tau_{1}),\ldots,e_{n},(x_{n},\tau_{n}))\]
is an element of $\CP(\G)$ and, for each $k\in \{0,\ldots,n\}$, we have $i_{k}\in I_{x_{k}}$.

The coloured path $\eta$ will be said to have the colour $i_{k}$ when it visits the vertex $x_{k}$. Note that a coloured path can have different colours at successive visits of the same vertex. 

A $\T$-coloured path, where $\T$ is the trivial colouring, is nothing but a path which, at each vertex that it visits, has the only existing colour at that vertex. Accordingly, we will identify freely the spaces $\CPv(\G)$ and $\CP(\G)$.

We say that the coloured path $\eta$ written above is a coloured loop if $x_{n}=x_{0}$ and $i_{n}=i_{0}$. We denote by $\CLc(\G)$ the set of coloured loops on $\G$.

By analogy with Definition \ref{local time}, we define, for all $x\in \V$ and all $i\in I_{x}$, the occupation measure and the local time of $\eta$ with colour $i$ at the vertex $x\in \V$ by
\[\vartheta^{i}_{x}(\eta)=\sum_{k=0}^{n} \delta_{x,x_{k}}\delta_{i,i_{k}} \tau_{k} \, \mbox{ and }\,  \ell^{i}_{x}(\eta)=\frac{1}{\lambda_{x}} \vartheta^{i}_{x}(\gamma).\]

In our naive quantum mechanical picture of paths on the graph, a coloured path describes not only the motion of a particle, but the successive states in which this particle can be found at the successive vertices that it visits. If the splitting that we are considering is the eigensplitting of a potential $H$, then these states can be measured at each vertex $x$ by the observable $H_{x}$.

Let us conclude this section by saying something about the way in which the set of $\I$-coloured paths depends on the colouring $\I$. Let us consider two colourings $\I$ and $\I'$ of $\F$ such that $\I$ is finer than $\I'$. Extending the metaphor of colours, there is a bleaching map $\b_{\I',\I}:\CPc(\G)\to \CPcp(\G)$ which, at each vertex $x$ visited by a path, changes the colour $i\in I_{x}$ into the unique colour $i'\in I'_{x}$ such that $\F^{i}_{x}\subset \F^{i'}_{x}$. If $\I''$ is a third colouring such that $\I'$ is finer than $\I''$, then the relation $\b_{\I'',\I'}\circ \b_{\I',\I}=\b_{\I'',\I}$ holds.

The special case where $\I'$ is the trivial colouring will be of particular interest, and we will use the simpler notation $[\eta]=\b_{\T,\I}(\eta)$ for the element of $\CP(\G)$ obtained by forgetting altogether the colours of an $\I$-coloured path $\eta$.

\subsection{Amplitudes} We will now use the colourings of a path to decompose the trace of the holonomy along that path, and ultimately to simplify the expression \eqref{badequation}.

Let $\eta=((x_{0},\tau_{0},i_{0}),e_{1},(x_{1},\tau_{1},i_{1}),\ldots,e_{n},(x_{n},\tau_{n},i_{n}))$ be an $\I$-coloured path. Recall that we chose a connection $h$ and a potential $H$ on the vector bundle $\F$. We define the \new{amplitude} of $h$ along the $\I$-coloured path $\eta$ twisted by $H$ as the operator
\[\amp^{\I}_{h,H}(\eta)=\pi^{i_{n}}_{x_{n}} \circ e^{-\tau_{n}H_{x_{n}}} \circ h_{e_{n}} \circ \ldots \circ \pi^{i_{1}}_{x_{1}}\circ e^{-\tau_{1}H_{x_{1}}}\circ h_{e_{1}}\circ \pi^{i_{0}}_{x_{0}}\circ e^{-\tau_{0}H_{x_{0}}} : \F^{i_{0}}_{x_{0}}\to \F^{i_{n}}_{x_{n}}.\]
This definition is designed for the following equality to hold: for every (non-coloured) path $\gamma$,  
\begin{equation}\label{amp-trace}
\sum_{\eta\in\CPc(\G): [\eta]=\gamma}\amp^{\I}_{h,H}(\eta)=\hol_{h,H}(\gamma)\,.
\end{equation}
More generally, if $\I$ is finer than $\I'$, then for all $\eta'\in \CPcp(\G)$
\begin{equation}\label{amp-trace2}
\sum_{\eta\in\CPc(\G): \b_{\I',\I}(\eta)=\eta'}\amp^{\I}_{h,H}(\eta)=\amp^{\I'}_{h,H}(\eta')\,.
\end{equation}

Note that for every $\I$-coloured path $\eta$, we have
\begin{equation}\label{etaeta}
\amp^{\I}_{h,H}(\eta^{-1})=\amp^{\I}_{h,H}(\eta)^{*},
\end{equation}
the adjoint of $\amp^{\I}_{h,H}(\eta)$. In particular, if $\eta$ is a coloured loop, then the traces  $\Tr\, \amp^{\I}_{h,H}(\eta)$ and $\Tr\,\amp^{\I}_{h,H}(\eta^{-1})$ are conjugate complex numbers. 

One of the reasons to introduce amplitudes is the following: if $H$ and $\I$ are adapted, then
\begin{equation}\label{loc-amp}
\amp^{\I}_{h,H}(\eta)=e^{-\sum_{x\in \V\setminus \W,i\in I_{x}} H^{i}_{x} \vartheta^{i}_{x}(\eta) }\amp^{\I}_{h}(\eta),
\end{equation}
a formula analogous to \eqref{sortH}, but which holds without the assumption that $H$ be scalar in each fibre. This indicates that, after lifting \eqref{badequation} from $\CL(\G)$ to $\CLc(\G)$, 
it will be much easier to deal with the quotient of holonomies, which will become a quotient of amplitudes. Before we implement this lifting procedure, we must say a word about the measures, indeed the signed measures that we will use on $\CLc(\G)$ and $\CPc(\G)$. 

\subsection{Signed measures on the sets of coloured loops and paths} In this section, we will define two families of measures on sets of coloured paths. Firstly, we will define a signed measure $\muplc_{h}$ on the set of $\I$-coloured loops, which in a sense lifts a measure on the set of loops absolutely continuous with respect $\mupl$ (see Definition \ref{defmu}), with a density depending on the connection $h$. Then, for every section $f$ of $\F$, we will define a signed measure $\nu^{\I}_{h,f}$ on the set of $\I$-coloured paths, which in a similar sense lifts and generalises the measures $\nu_{x,y}$ (see Definition \ref{defnu}).

\begin{definition} Let $h$ be a connection on $\F$. Let $\I$ be a colouring of $\F$. The measure $\muplc_{h}$ is defined on $\CLc(\G)$ by the fact that for all bounded non-negative measurable function $F$ on $\CLc(\G)$,
\[\int_{\CLc(\G)} F(\eta) \; \d\muplc_{h}(\eta)=\int_{\CL(\G)} \sum_{\eta\in\CLc(\G): [\eta]=\gamma }F(\eta)\Re\Tr(\amp^{\I}_{h}(\eta^{-1})) \; \d\mupl(\gamma).\]
\end{definition}

To make it clear that the integral above exists, we can rewrite it as
\begin{equation}\label{defmuplc}
\int_{\CL(\G)} \sum_{\eta\in\CLc(\G): [\eta]=\gamma }F(\eta)\Re\Tr(\amp^{\I}_{h}(\eta^{-1})) \; \d\mul(\gamma) +\sum_{x\in \V} \int_{0}^{+\infty} \sum_{i\in I_{x}} F((x,t,i)) e^{-t}\frac{\d t}{t},
\end{equation}
so that the measure $\muplc_{h}$ appears as the sum of an honest signed measure and a $\sigma$-finite positive measure. 

In the particular case of the trivial splitting, the formula above defines a measure $\muplv_{h}$ on $\CL(\G)$, which we will denote simply by $\mupl_{h}$ and which can be written as
\begin{equation}\label{eq:defmuplh}
\d\mupl_{h}(\gamma)=\Re\Tr(\hol_{h}(\gamma^{-1}))\; \d\mupl(\gamma).
\end{equation}

The next proposition tells us how to lift to the space of coloured loops the integrals we are interested in.
\begin{proposition}\label{liftmu} Let $F$ be a bounded non-negative function on $\CL(\G)$. Then
\begin{equation}\label{LIL}
\int_{\CL(\G)} F(\gamma) \; \d\mupl_{h}(\gamma)=\int_{\CLc(\G)}F([\eta]) \; \d\muplc_{h}(\eta).
\end{equation}
Assume that $F(\gamma)=F(\gamma^{-1})$ for every loop $\gamma$. Then
\begin{equation}\label{LILhH}
\int_{\CL(\G)}F(\gamma)\Tr(\hol_{h,H}(\gamma^{-1}))\; \d\mupl(\gamma)=\int_{\CLc(\G)}F([\eta])e^{-\sum_{x\in \V\setminus \W,i\in I_{x}} H^{i}_{x} \vartheta^{i}_{x}(\eta) }\;  \d\muplc_{h}(\eta).
\end{equation}
\end{proposition}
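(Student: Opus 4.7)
The plan is to unfold the definition of $\muplc_{h}$ and recognise the holonomy (or twisted holonomy) as a sum of amplitudes over coloured liftings, using \eqref{amp-trace} and \eqref{loc-amp}. For \eqref{LIL}, I would apply the defining formula of $\muplc_{h}$ to the function $\eta\mapsto F([\eta])$ on $\CLc(\G)$. Since $F([\eta])$ depends only on the underlying loop $\gamma=[\eta]$, it can be pulled out of the inner sum, leaving
\[
\int_{\CLc(\G)}F([\eta])\,d\muplc_{h}(\eta)=\int_{\CL(\G)}F(\gamma)\sum_{\eta:[\eta]=\gamma}\Re\Tr\bigl(\amp^{\I}_{h}(\eta^{-1})\bigr)\,d\mupl(\gamma).
\]
By linearity of trace and real part, the inner sum equals $\Re\Tr\bigl(\sum_{\eta:[\eta]=\gamma}\amp^{\I}_{h}(\eta^{-1})\bigr)$, and by \eqref{amp-trace} (applied with zero potential, to the loop $\gamma^{-1}$) this is $\Re\Tr\hol_{h}(\gamma^{-1})$, which is precisely the density of $\mupl_{h}$ with respect to $\mupl$. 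This gives \eqref{LIL}.

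For \eqref{LILhH}, the first step is to replace $\Tr$ by $\Re\Tr$ on the left-hand side. Using Lemma \ref{reversal} (the measure $\mupl$ is invariant under $\gamma\mapsto\gamma^{-1}$), the symmetry hypothesis $F(\gamma)=F(\gamma^{-1})$, and \eqref{inverse-adjoint} (so that $\Tr\hol_{h,H}(\gamma^{-1})=\overline{\Tr\hol_{h,H}(\gamma)}$), a change of variable yields
\[
\int_{\CL(\G)}F(\gamma)\Tr\hol_{h,H}(\gamma^{-1})\,d\mupl(\gamma)=\int_{\CL(\G)}F(\gamma)\,\Re\Tr\hol_{h,H}(\gamma^{-1})\,d\mupl(\gamma).
\]
Next, I would decompose the holonomy over coloured lifts via \eqref{amp-trace}, which gives $\hol_{h,H}(\gamma^{-1})=\sum_{\eta:[\eta]=\gamma}\amp^{\I}_{h,H}(\eta^{-1})$. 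Under the (implicit) assumption that the splitting $\I$ is adapted to the potential $H$, I would then invoke \eqref{loc-amp} to write $\amp^{\I}_{h,H}(\eta^{-1})=e^{-\sum_{x,i}H^{i}_{x}\vartheta^{i}_{x}(\eta^{-1})}\amp^{\I}_{h}(\eta^{-1})$, and observe that the coloured occupation measure is invariant under path-reversal, i.e.\ $\vartheta^{i}_{x}(\eta^{-1})=\vartheta^{i}_{x}(\eta)$, since time-reversal merely reorders the coloured holding times without changing them. Plugging this back gives
\[
\int F(\gamma)\sum_{\eta:[\eta]=\gamma}e^{-\sum_{x,i}H^{i}_{x}\vartheta^{i}_{x}(\eta)}\Re\Tr\bigl(\amp^{\I}_{h}(\eta^{-1})\bigr)d\mupl(\gamma),
\]
which, by the defining formula of $\muplc_{h}$, is exactly the right-hand side of \eqref{LILhH}.

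The only genuine subtlety is the handling of the infinite part of $\mupl$ coming from constant loops. Everything above is formally correct pointwise, so the issue is integrability: the measure $\mupl_{h}$ is signed with an infinite constant-loop component, and the same is true of $\muplc_{h}$. I would deal with this by first assuming $F$ is a bounded non-negative function whose support avoids loops of arbitrarily small lifetime (so that all integrals are absolutely convergent and Fubini/linearity are unproblematic), and then extending by monotone convergence or by restricting to the decomposition \eqref{defmuplc} and arguing separately on the $\mul$-part (finite signed measure) and on the $\mup$-part (explicit positive $\sigma$-finite measure). On the $\mup$-part the identities are direct since constant loops satisfy $\gamma^{-1}=\gamma$, and the bleaching fibres are indexed by $I_{x}$ at each vertex. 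The main potential obstacle, beyond this routine bookkeeping, is simply making sure the algebraic identities \eqref{amp-trace} and \eqref{loc-amp} are applied with consistent source/target spaces; but once the decomposition is set up correctly, the proof is a pure unfolding of the definitions.
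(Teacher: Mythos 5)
Your argument is correct and is essentially the paper's own proof: both identities follow by unfolding the definition of $\muplc_{h}$ and invoking \eqref{amp-trace} for \eqref{LIL}, and \eqref{loc-amp} together with the reversal invariance of $\mupl$ (Lemma \ref{reversal}) and the adjoint relation \eqref{inverse-adjoint}/\eqref{etaeta} to reduce $\Tr$ to $\Re\Tr$ for \eqref{LILhH}; the paper merely runs the second computation from right to left instead of left to right. Your explicit remarks that the splitting must be adapted to $H$ (implicit in the notation $H^{i}_{x}$) and that the constant-loop part of the infinite measure should be treated separately via \eqref{defmuplc} are sound and consistent with the paper's conventions.
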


\begin{proof} The first assertion is a direct consequence of the definition of $\muplc_{h}$ and \eqref{amp-trace}.

For the second assertion, we start from the right-hand side, apply the definition of $\muplc_{h}$, use \eqref{loc-amp} and then \eqref{amp-trace}. This yields the real part of the left-hand side of the equality that we wish to prove. To see that this left-hand side is indeed real, we use Lemma \ref{reversal}, which asserts that the measure $\mupl$ is invariant under the involution $\gamma\mapsto \gamma^{-1}$, and \eqref{etaeta}.
\end{proof}

Note that we can use \eqref{amp-trace2} to generalise \eqref{LIL}: if $\I$ is finer than $\I'$, then for every bounded non-negative measurable function on $\CLcp(\G)$, we have
\begin{equation}\label{bii}
\int_{\CLc(\G)} F \circ \b_{\I',\I} \; \d\muplc=\int_{\CLcp(\G)} F \; \d\muplp.
\end{equation}

In the statement of our main result, we will make use of the positive and negative parts of the measure $\muplc_{h}$, which we will denote respectively by $\muplcp_{h}$ and $\muplcm_{h}$. Note that $\muplcp_{h}$ contains the second term of the right-hand side of \eqref{defmuplc}, corresponding to constant coloured loops, and is thus an infinite positive measure, whereas $\muplcm_{h}$ is a finite positive measure supported by the set of non-constant coloured loops.

Let us mention a delicate point which arises from the fact that we are using signed measures. Although \eqref{bii} expresses the compatibility of the family of measures $\muplc_{h}$ with the bleaching maps $\b_{\I',\I}$, it is not the case that the measures $\muplcp_{h}$, nor the measures $\muplcm_{h}$ satisfy the same compatibility relations: in symbols, we have in general
\[\muplcp_{h}\circ \b_{\I',\I}^{-1}\neq \muplpp \, \mbox{ and } \, \muplcm_{h}\circ \b_{\I',\I}^{-1}\neq \muplpm,\]
but
\[(\muplcp_{h}-\muplcm_{h})\circ \b_{\I',\I}^{-1}= (\muplpp - \muplpm).\]
This is because the amplitudes of all the $\I$-colourings of a path compatible with a given $\I'$-colouring of the same path need not have traces of the same sign. See Figure \ref{pm} below for a graphical explanation of this phenomenon in the case where $\I'$ is the trivial colouring. We shall come back to this point after the proof of Theorem \ref{thm:LeJanSznitman} below.

\begin{figure}[h!]
\begin{center}
\includegraphics{plusoumoins}
\caption{\small \label{pm} This schematic picture shows the supports of the positive and negative parts of the measures that we introduced on $\CPc(\G)$ and $\CP(\G)$.}
\end{center}
\end{figure}

Let us turn to the definition of the second family of measures announced at the beginning of this section. It is a family of measures on the set of coloured paths that are not necessarily loops. 

\begin{definition} Let $h$ be a connection on $\F$. Let $\I$ be a colouring of $\F$.
Let $f\in \Omega^{0}(\V\setminus \W,\F)$ be a section of $\F$. The measure $\nu^{\I}_{h,f}$ is the measure on $\CPc(\G)$ such that for all non-negative measurable function $F$ on $\CPc(\G)$,
\begin{equation}\label{defnuhf}
\int_{\CPc(\G)} F(\eta)\; \d\nu^{\I}_{h,f}(\eta)=\int_{\CP(\G)}\sum_{\eta\in \CP^\I(\G):[\eta]=\gamma}\lambda_{\ul{\gamma}} F(\eta) \Re\big\langle (\Delta_h f)_{\ul{\gamma}} , \amp^{\I}_h (\eta^{-1}) (\Delta_h f)_{\ol{\gamma}} \big\rangle_{\ul{\gamma}} \; \d\nu(\gamma).
\end{equation}
\end{definition}

Since $\nu$, in contrast to $\mupl$, is a finite measure, $\nu^{\I}_{h,f}$ is a genuine signed measure, with finite positive and negative parts.

Applied to the trivial splitting, this definition yields a measure $\nu^{\T}_{h,f}$ on $\CP(\G)$ which we denote simply by
\[\d\nu_{h,f}(\gamma)= \lambda_{\ul{\gamma}} \Re\big\langle (\Delta_{h}f)(\ul{\gamma}),\hol_{h}(\gamma^{-1}) (\Delta_{h}f)(\ol{\gamma})\big\rangle_{\ul{\gamma}} \; \d\nu(\gamma).\]
Recall from Definition \ref{defnu} that the definition of $\nu$ involves the value of $\lambda$ at the final point of the path, so that the equation above is more symmetric than it looks, and could be written as
\[\d\nu_{h,f}(\gamma)= \sum_{x,y\in \V\setminus \W}\lambda_{x}\lambda_{y} \Re\big\langle (\Delta_{h}f)(x),\hol_{h}(\gamma^{-1}) (\Delta_{h}f)(y)\big\rangle_{x} \; \d\nu_{x,y}(\gamma).\]

We have for the measures $\nu^{\I}_{h,f}$ the following formulas, analogous to those of Proposition \ref{liftmu}.

\begin{proposition}\label{liftnu} Let $F$ be a non-negative measurable function on $\CPc(\G)$. Then
\begin{equation}\label{PIP2}
\int_{\CP(\G)} F(\gamma)\; \d\nu_{h,f}(\gamma)=\int_{\CPc(\G)}F([\eta])\;  \d\nu^{\I}_{h,f}(\eta)
\end{equation}
and
\begin{align}\label{PIPhH}
&\int_{\CP(\G)}\!\!\lambda_{\ul{\gamma}}F(\gamma)\big\langle (\Delta_{h}f)(\ul{\gamma}),\hol_{h,H}(\gamma^{-1}) (\Delta_{h}f)(\ol{\gamma})\big\rangle_{\ul{\gamma}}\; \d\nu(\gamma)\\
\nonumber
&\hspace{6cm}=\int_{\CPc(\G)}\!\! F([\eta])e^{-\sum_{x\in \V\setminus\W,i\in I_{x}} H^{i}_{x} \vartheta^{i}_{x}(\eta) }\;  \d\nu^{\I}_{h,f}(\eta).
\end{align}
\end{proposition}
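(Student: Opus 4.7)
The plan is to deduce both identities directly from the defining formula~\eqref{defnuhf} of $\nu^{\I}_{h,f}$, combined with the two algebraic facts \eqref{amp-trace} (colouring-sums of amplitudes collapse to holonomies) and \eqref{loc-amp} (multiplication by $e^{-\sum H\vartheta}$ converts an untwisted amplitude into a twisted one).

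For \eqref{PIP2}, I would start from the right-hand side and unfold \eqref{defnuhf}. Every factor in the integrand besides the amplitude depends only on $[\eta]=\gamma$, so $F([\eta])=F(\gamma)$, and the summation $\sum_{\eta:[\eta]=\gamma}$ touches only $\amp^{\I}_h(\eta^{-1})$. Since the bleaching map commutes with path inversion, \eqref{amp-trace} gives $\sum_{\eta:[\eta]=\gamma}\amp^{\I}_h(\eta^{-1})=\hol_h(\gamma^{-1})$, and the resulting expression coincides with $\int F\,d\nu_{h,f}$ by the definition of $\nu_{h,f}$ given just after Definition~\ref{defnuhf}.

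For \eqref{PIPhH}, I would follow the same template with one added step. Using $\vartheta^i_x(\eta)=\vartheta^i_x(\eta^{-1})$, the real scalar $e^{-\sum_{x,i}H^i_x\vartheta^i_x(\eta)}$ can be pushed inside the inner product and combined with $\amp^{\I}_h(\eta^{-1})$ through \eqref{loc-amp} to yield the twisted amplitude $\amp^{\I}_{h,H}(\eta^{-1})$. Summing over all $\I$-colourings of $\gamma$ via \eqref{amp-trace} collapses these to $\hol_{h,H}(\gamma^{-1})$, producing $\Re$ of the left-hand side of \eqref{PIPhH}.

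The only step that is not purely formal, and the one I expect to require the most care, is the removal of this outer $\Re$, exactly as in the second half of Proposition~\ref{liftmu}. Although $\nu$ itself is not reversal-invariant, the integrand carries an extra $\lambda_{\ul{\gamma}}$ beyond the $\lambda_{\ol{\gamma}}$ already present inside $d\nu$, so one is effectively integrating against the symmetric weight $\lambda_{\ul{\gamma}}\lambda_{\ol{\gamma}}\,d\nu(\gamma)$, which by Lemma~\ref{reversal} is invariant under $\gamma\mapsto\gamma^{-1}$. Pairing the $(x,y)$- and $(y,x)$-contributions and invoking \eqref{inverse-adjoint} together with the conjugate-symmetry of the Hermitian form shows that the two complex numbers produced are conjugate of each other, so that their sum is real. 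Under the reversal-symmetry on $F$ implicit here (as it is in the assumption $F(\gamma)=F(\gamma^{-1})$ of Proposition~\ref{liftmu}), this cancels the imaginary part of the left-hand side and allows the $\Re$ to be dropped.
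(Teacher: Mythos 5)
Your proof is correct and follows essentially the paper's own route: the paper's proof just says the argument is that of Proposition \ref{liftmu} with the reversal-invariant measure $\lambda_{\ul{\gamma}}\,d\nu(\gamma)=\sum_{x,y\in\V}\lambda_{x}\lambda_{y}\,\nu_{x,y}$ replacing $\mupl$, which is exactly the symmetrisation you perform, combined with \eqref{loc-amp}, \eqref{amp-trace} and \eqref{inverse-adjoint}. Your closing observation is also accurate: dropping the outer $\Re$ does require the reversal symmetry $F(\gamma)=F(\gamma^{-1})$, which is left implicit in the statement of \eqref{PIPhH} but holds in its actual use (with constant $F$) in the proof of Theorem \ref{thm:LeJanSznitman}.
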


\begin{proof} The proof uses almost exactly the same arguments as the proof of Proposition \ref{liftmu}. The only difference is the fact that the measure $\nu$ is not invariant under the map $\gamma\mapsto \gamma^{-1}$, but the measure $\lambda_{\ul{\gamma}}d\nu(\gamma)$, that is, the measure $\sum_{x,y\in \V\setminus\W}\lambda_{x}\lambda_{y}\nu_{x,y}$, is invariant.
\end{proof}

In the next section, we will make use of the positive and negative parts of this measure, which we will denote respectively by $\nu^{\I,+}_{h,f}$ and $\nu^{\I,-}_{h,f}$. A simple yet useful observation is that if $f$ is the zero section, then these measures are the null measures.

The measures $\nu^{\I}_{h,f}$, where $h$ and $f$ are fixed and $\I$ varies in the set of splittings of the vector bundle $\F$, satisfy the exact same compatibility relations with respect to the bleaching maps as the measures $\muplc_{h}$, and the same precautions are in order when one considers the behaviour of the positive and negative parts of these measures with respect to the bleaching maps. 

\subsection{Le Jan and Sznitman's isomorphisms}

Let us introduce a last piece of notation. Let $\I$ be a colouring of the fibre bundle $\F$. Let $\mathcal P$ be a set of $\I$-coloured paths. We denote by $\ell^{\I}(\mathcal P)$ the collection 
\[\ell^{\I}(\mathcal P)=(\ell^{i}_{x}(\mathcal P))_{x\in \V\setminus \W,i\in I_{x}},\] where for all $x\in \V\setminus \W$ and all $i\in I_{x}$,
\[\ell^{i}_{x}(\mathcal P)=\sum_{\eta\in \mathcal P}\ell^{i}_{x}(\eta).\]
Let now $f$ be a section of $\F$. We denote by $\|\!\cdot\!\|^{2,\I}(f)$ the collection 
\[\|\!\cdot\!\|^{2,\I}(f)=(\|\pi^{i}_{x}(f)\|^{2}_{x})_{x\in \V\setminus \W,i\in I_{x}}.\]
In the informal terminology that we introduced in Section \ref{sec:splitGFF}, both $\ell^{\I}(\mathcal P)$ and $\|\!\cdot\!\|^{2,\I}(f)$ are scalar coloured fields. The following theorem, which generalises the isomorphism theorems of Le Jan and Sznitman, states the equality in distribution of random scalar coloured fields of this nature. 

\begin{theorem}\label{thm:LeJanSznitman} Let $\G$ be a weighted graph with a well. Let $\F$ be a vector bundle over $\G$. Let~$h$ be a connection on $\F$. Let $\Phi$ be the Gaussian free vector field on $\G$ associated with~$h$. Let~$\I$ be a colouring of $\F$. Let $f\in\Omega^0(\V\setminus \W,\F)$ be a section of $\F$. 
Let $\L_+$ and $\L_-$ be Poissonian ensembles in $\CL^\I(\G)$ with respective intensities $\frac{\beta}{2}\muplcp_{h}$ and $\frac{\beta}{2}\muplcm_{h}$. 
Let $\cE_+$ and $\cE_-$ be  Poissonian ensembles in $\CPc(\G)$ with respective intensities $\frac{\beta}{2}\nu^{\I,+}_{h,f}$ and $\frac{\beta}{2}\nu^{\I,-}_{h,f}$. 
Assume that $\Phi, \L_{+},\L_{-},\cE_{+},\cE_{-}$ are independent.
Then the following equality holds in distribution:
\begin{equation}\label{LeJanSznitman}
\ell^{\I}(\L_+ \cup \cE_+)\build{=}_{}^{(d)}\frac{1}{2}\|\!\cdot\!\|^{2,\I}(\Phi+f)+\ell^{\I}(\L_- \cup \cE_-).
\end{equation}

Moreover, 
\begin{equation}\label{LeJanSznitman-2}
\frac{1}{2}\|\!\cdot\!\|^{2,\I}(\Phi)+\ell^{\I}(\cE_+)\build{=}_{}^{(d)}\frac{1}{2}\|\!\cdot\!\|^{2,\I}(\Phi+f)+\ell^{\I}(\cE_-).
\end{equation}

\end{theorem}

To be clear, the conclusion of~\eqref{LeJanSznitman} in the theorem is that the two random vectors
\[\{\ell^{i}_{x}(\L_+ \cup \cE_+) : x\in \V\setminus \W, i \in I_{x}\} \mbox{ and } \left\{\frac{1}{2}\| \pi^{i}_{x}(\Phi_{x}+f_{x})\|^{2}_{x}+\ell^{i}_{x}(\L_- \cup \cE_-) : x\in \V\setminus \W, i \in I_{x}\right\}\]
have the same distribution. Likewise, the meaning of~\eqref{LeJanSznitman-2} is that the two random vectors
\[\left\{\frac{1}{2}\| \pi^{i}_{x}(\Phi_{x})\|^{2}_{x}+\ell^{i}_{x}(\cE_+) : x\in \V\setminus \W, i \in I_{x}\right\} \mbox{and} \left\{\frac{1}{2}\| \pi^{i}_{x}(\Phi_{x}+f_{x})\|^{2}_{x}+\ell^{i}_{x}(\cE_-) : x\in \V\setminus \W, i \in I_{x}\right\}\]
have the same distribution. 

\begin{proof} We start by proving the theorem in the case where $f=0$. In this case, the ensembles $\cE_{+}$ and $\cE_{-}$ are almost surely empty. 

Let us choose a potential $H$ adapted to $\I$ and such that $H_{x}$ is non-negative for all $x\in \V$. We start from \eqref{masterLJStr}, which we multiply by $\frac{\beta}{2}$ and of which we exponentiate both sides. On the right-hand side we find
\[\left(\frac{\det \Delta_{h}}{\det \Delta_{h,H}}\right)^{\frac{\beta}{2}}=\Ex^{h}\left[e^{-\frac{1}{2} (\Phi,H\Phi)_{\Omega^{0}}}\right].\]
On the left-hand side, we find, thanks to \eqref{LILhH} applied once to the zero potential and once to the potential $H$,
\[\exp \frac{\beta}{2}\int_{\CLc(\G)} \left(e^{-\sum_{x\in \V\setminus\W,i\in I_{x}} H^{i}_{x} \vartheta^{i}_{x}(\eta) }-1\right) \; \d\muplc_{h}(\eta).\]
Splitting the measure $\muplc_{h}$ into its positive and negative parts and writing  Campbell's formula for each part, we find that
\begin{equation}\label{eqn:lejan-colore}
\Ex\left[e^{-\sum_{x\in \V\setminus\W, i\in I_{x}} \lambda_{x} H^{i}_{x} \ell^{i}_{x}(\L_{+})}\right] = \Ex^{h}\left[e^{-\frac{1}{2} \sum_{x\in \V\setminus\W,i \in I_{x}} \lambda_{x} H^{i}_{x}\|\pi^{i}_{x}(\Phi_{x})\|^{2}_{x}}\right]\Ex\left[e^{-\sum_{x\in \V\setminus\W, i\in I_{x}} \lambda_{x} H^{i}_{x} \ell^{i}_{x}(\L_{-})}\right],
\end{equation}
where the first expectation (resp. the last) is taken with respect to the distribution of the Poisson point process with intensity $\frac{\beta}{2}\muplcp_{h}$ (resp. $\frac{\beta}{2}\muplcm_{h}$).
Since this equality holds for arbitrary non-negative values of the numbers $H^{i}_{x}$, it says exactly that the Laplace transforms of both sides of \eqref{LeJanSznitman} are equal.

We now turn to the proof of the general case. We proceed again to show the equality of the Laplace transforms of both sides of \eqref{LeJanSznitman}. Combining the  special case of the theorem that we just proved, where $f=0$,  and Proposition \ref{useful-gaussian-proposition}, which was precisely designed to be used here, we conclude that it suffices to prove the equality
\[\Ex\left[e^{-\sum_{x\in\V\setminus \W,i\in I_x}\lambda_{x} H^{i}_{x} \ell^{i}_{x}(\cE_+)}\right]=e^{ \frac{1}{2}\left(\Delta_h f, (\Delta_{h,H}^{-1}-\Delta_h^{-1})\Delta_h f\right)_{\Omega^0}} \Ex\left[e^{-\sum_{x\in\V\setminus \W,i\in I_x}\lambda_{x}H^{i}_{x} \ell^{i}_{x}(\cE_-)}\right]\,.\]
Following backwards the same arguments that we used in the first part of this proof, we see that we must prove that
\begin{equation}\label{toprove}
\int_{\CPc(\G)} \left(e^{-\sum_{x\in \V\setminus \W,i\in I_{x}} H^{i}_{x} \vartheta^{i}_{x}(\eta) }-1\right) \; \d\nu^{\I}_{h,f}(\eta)=\big(\Delta_{h} f, (\Delta_{h,H}^{-1}-\Delta_{h}^{-1})\Delta_{h} f\big)_{\Omega^0}.
\end{equation}
By \eqref{masterDE} and Definition \ref{defnu}, the right-hand side is equal to
\begin{equation}\label{intermed}
\sum_{x,y\in \V\setminus \W} \lambda_{x}\lambda_{y} \int_{\CP(\G)} \big\langle (\Delta_{h}f)(x),(\hol_{h,H}(\gamma^{-1})-\hol_{h}(\gamma^{-1}))(\Delta_{h}f)(y)\big\rangle_{x} \; \d\nu_{x,y}(\gamma).
\end{equation}
By \eqref{PIPhH}, this is equal to the left-hand side of \eqref{toprove}, as expected.

Equation~\eqref{LeJanSznitman-2} follows from the same proof by using the result for $f=0$ to replace local times of $\L_+$ and $\L_-$ appropriately by the squared norms of Gaussian free vector field parts.
\end{proof}

Just as for the theorems of Dynkin and Eisenbaum, the classical theorems of Le Jan and Sznitman correspond to the special case of our theorem where $\F=\R_{\G}$ is the trivial real vector bundle of rank $1$ endowed with the trivial connection. In that case, all measures are positive, and the ensembles $\L_{-}$ and $\cE_{-}$ are almost surely empty. Let us write $\L=\L_{+}$ and $\cE=\cE_{+}$.

When $f=0$, we recover Le Jan's theorem, which asserts, in our notation, that $\ell(\L)$ has the same distribution as $\frac{1}{2}\Phi^{2}$. When $f$ is constant equal to a real $\sqrt{2s}$, we have $\Delta f=\sqrt{2s}\frac{\kappa}{\lambda}$, and the measure $\frac{\beta}{2}\nu^{\I}_{h,f}$ becomes simply
\[s\sum_{x,y\in \V\setminus \W} \kappa_{x}\kappa_{y}\nu_{x,y}.\]
Then we recover Sznitman's theorem which asserts that $\ell(\L\cup \cE)$ has the same distribution as $\frac{1}{2}(\Phi+\sqrt{2s})^{2}$.

It is tempting to compare the conclusions of Theorem \ref{thm:LeJanSznitman} for two different colourings $\I$ and~$\I'$ such that $\I$ is finer than $\I'$. It seems that of the two statements associated with two such colourings, no one is logically stronger than the other.

Let us discuss without proof the case where $\I$ is a complete splitting such that $I_{x}=\{1,\ldots,r\}$ for every vertex $x$, and $\I'$ the trivial splitting. Let us also assume, for the sake of simplicity, that $f=0$. Then, using the equality $\|\Phi^{1}_{x}\|^{2}_{x}+\ldots+\|\Phi^{r}_{x}\|^{2}_{x}=\|\Phi_{x}\|^{2}_{x}$ for every $x$, one can prove the equality in distribution
\[\left\{ \sum_{i=1}^{r}\ell^{i}_{x}(\L^{\I}_{+}) + \ell_{x}(\L_{-}):x\in \V\setminus \W\right\}\build{=}_{}^{(d)}\left\{ \sum_{i=1}^{r}\ell^{i}_{x}(\L^{\I}_{-}) + \ell_{x}(\L_{+}):x\in \V\setminus \W\right\},\]
where the ensembles corresponding to $\I$ carry a superscript $\I$, and the ensembles corresponding to $\I'$ carry no superscript. The common distribution of these random vectors can moreover be described as the distribution of the local time of a Poissonian ensemble of non-coloured paths with intensity equal to the image by the bleaching map $\eta\mapsto [\eta]$ of the measure $\1_{C^{\I}}\left|\muplc_{h}\right|$, where
\[C^{\I}=\left\{\eta \in \CLc(\G) : {\rm sgn}(\Re \Tr (\amp_{h}(\eta^{-1})))={\rm sgn}(\Re\Tr (\hol_{h}([\eta]^{-1})))\right\}.\]
In Figure \ref{pm}, the set $C^{\I}$ is the union of the top left and bottom right rectangles. 

\subsection{Trace-positive connections}\label{sec:LJpositif}

Our extension of Le Jan's isomorphism theorem, that is, the specialisation of Theorem \ref{thm:LeJanSznitman} to the case where $f=0$, takes a particularly nice form for certain connections that we call \new{trace-positive}, and which are characterised by the fact that
\begin{equation} \label{eq:trpositive}
\forall \gamma \in \CL(\G), \ \Re\Tr(\hol_{h}(\gamma))\geq 0.
\end{equation}
For a trace-positive connection, \eqref{eq:defmuplh} defines $\mupl_{h}$ as a positive $\sigma$-finite measure, and the theorem, stated for the trivial splitting, takes the following form. 

\begin{theorem}\label{thm:LeJanSznitmanpositif} Let $\G$ be a weighted graph with a well, and $\F$ a vector bundle over $\G$. Let $h$ be a trace-positive connection on $\F$ and $\Phi$ be the Gaussian free vector field on $\G$ associated with $h$. Let $\L$ be a Poissonian ensemble in $\CL(\G)$ with intensity 
\[\frac{\beta}{2}\Re\Tr(\hol_{h}(\cdot))\; \d\mupl(\cdot),\]
independent of $\Phi$. Then 
\begin{equation}\label{LeJanSznitmanpositif}
\ell(\L)\build{=}_{}^{(d)}\frac{1}{2}(\|\Phi_{x}\|^{2})_{x\in \V\setminus\W}.
\end{equation}
\end{theorem}

Let us describe several ways of constructing trace-positive connections from an arbitrary real or complex bundle $\F$ of rank $r$ endowed with a connection $h$.  

The first way consists in taking the direct sum of $\F$ with the trivial bundle $\K^{r}$ of the same rank and endowed with the trivial connection. Then, denoting by $h\oplus \Id$ the connection on the bundle $\F\oplus \K^{r}$, we have, for any loop $\gamma$,
\[\Re\Tr(\hol_{h\oplus \Id}(\gamma))=\Re\Tr(\hol_{h}(\gamma)\oplus \Id_{\K^{r}})=\Re\Tr(\hol_{h}(\gamma))+r\geq 0.\]

A second way consists in taking the tensor product of $\F$ with its dual bundle, that is, considering the bundle $\End(\F)\simeq \F^{*}\otimes \F$. On this bundle, the connection $h$ induces by conjugation a connection $h^{*}\otimes h$, such that for all path $\gamma$ and for all $f\in \End(\F)_{\ul{\gamma}}=\End(\F_{\ul{\gamma}})$, 
\[\hol_{h^{*}\otimes h}(\gamma)(f)=\hol_{h}(\gamma)\circ f \circ \hol_{h}(\gamma)^{-1}.\]
The trace-positivity of the connection $h^{*}\otimes h$ follows from the fact that if $u$ is a unitary transformation of a Euclidean or Hermitian space $E$, then the trace of the linear transformation $f\mapsto ufu^{-1}$ of $\End(E)$ is $|\Tr(u)|^{2}$. Thus,
\[\Re\Tr(\hol_{h^{*}\otimes h}(\gamma))=\Tr(\hol_{h^{*}\otimes h}(\gamma))=|\Tr(\hol_{h}(\gamma))|^{2}\geq 0.\]
The bundle $\End(\F)$ splits as the direct sum $\End_{+}(\F)\oplus \End_{-}(\F)$ of the bundles of self-adjoint and skew-self-adjoint endomorphisms, each of which is stabilised by the connection $h^{*}\otimes h$. In the real case, the restrictions of $h^{*}\otimes h$ to these sub-bundles are not trace-positive, except in trivial cases where the sub-bundles have rank $1$. However, in the complex case, the restrictions to Hermitian and skew-Hermitians operators are trace-positive. In that case, the Gaussian free fields $\Psi$ on these sub-bundles are independent and the induced squared-norm fields $x\mapsto \frac{1}{2}\|\Psi_{x}\|^{2}$ have the same distribution which is equal to the occupation time of a Poissonian ensemble in~$\CL(\G)$ with intensity $\frac{1}{2}|\Tr(\hol_{h}(\cdot))|^2\; \d\mupl(\cdot)$. 

The bundle of Hermitians operators of a complex bundle was studied by Lupu \cite{Lupu-Dynkin}, who gave topological expansions, in terms of maps on surfaces and traces of holonomies of random paths, for the expression of the mixed moments of traces of squares of the random Hermitian endomorphisms at different vertices. His formulas generalise both the classical Br\'ezin--Itzykson--Parisi--Zuber formula (in the case of moments of the Gaussian unitary (random matrix) ensemble, corresponding to a graph with only one proper vertex) and the Dynkin isomorphism, thus unifying, in probabilistic terms, the approaches of 't Hooft and Symanzik.

\section{Symanzik identity}\label{sec:Symanzik}

In this last section, we investigate Symanzik's identity, which gives an expression of the moments of some non-Gaussian random sections of the vector bundle over our graph.

\subsection{Non-Gaussian random sections}

The random fields that one considers in Symanzik's identity are annealed versions of the Gaussian free vector field: they are Gaussian with respect to a random potential. In our covariant situation, we let not only the potential, but also the connection be random.
In the classical case, a further family of fields was considered in~\cite{BFS1} using Fourier transforms instead of Laplace transform, which make the probabilistic formulation less easy to state.

Let as usual $\G$ be a weighted graph with a well and $\F$ a vector bundle over $\G$. Recall that the set of connections on $\F$ is denoted by $\A(\F)$. Let $\H(\F)$ denote the set of potentials on $\F$, in the sense of Definition \ref{def:potentiel}. The space $\A(\F)$ is a compact topological space and $\H(\F)$ is a finite-dimensional vector space. We endow both of them with their Borel $\sigma$-field.

\begin{definition} Let $\PP$ be a Borel probability measure on $\A(\F)\times \H(\F)$. The measure $\P^{\PP}$ is the probability measure on $\Omega^{0}(\V\setminus \W,\F)$ such that for all bounded measurable function $F$ on $\Omega^{0}(\V\setminus \W,\F)$, 
\[\int_{\Omega^{0}(\V\setminus \W,\F)} F(f)\; \d \P^{\PP}(f)=\frac{1}{Z^{\PP}}\int_{\Omega^{0}(\V\setminus \W,\F)\times \A(\F)\times \H(\F)} F(f)e^{-\frac{1}{2}(f,\Delta_{g,K}f)_{\Omega^{0}}} \; \d\Leb_{\Omega^{0}}(f) \d\PP(g,K),\]
where
\begin{equation}\label{defZ}
Z^{\PP}=\pi^{\frac{\beta r |\V|}{2}} \int_{\A(\F)\times \H(\F)} \det \Delta_{g,K}^{-\frac{\beta}{2}}\; \d\PP(g,K).
\end{equation}
\end{definition}
If $\PP$ is the Dirac mass at $(h,H)$, then $\P^{\PP}=\P^{h,H}$. In that case, we denote $Z^{\PP}$ simply by $Z^{h,H}$, instead of $Z^{\delta_{(h,H)}}$. 

Let us emphasise that in general, $\P^{\PP}$ is not equal to $\int_{\A(\F)\times \H(\F)} \P^{g,K}\; \d\PP(g,K)$, but rather to
\begin{equation}\label{ZP}
\P^{\PP} = \frac{1}{Z^{\PP}}\int_{\A(\F)\times \H(\F)} Z^{g,K}\P^{g,K}\; \d\PP(g,K). 
\end{equation}

If one is given a reference connection $h$ and a reference potential $H$, then the definition of the measure $\P^{\PP}$ can be rewritten as
\begin{align*}
&\int_{\Omega^{0}(\V\setminus \W,\P)}F(f)\; \d\P^{\PP}(f)=\\
&\hspace{2.5cm}\frac{1}{Z^{\PP}}\int_{\Omega^{0}(\V\setminus\W,\F)} F(f) \Bigg[\int_{\A(\F)\times \H(\F)} e^{-\frac{1}{2}(f,(\Delta_{g,K}-\Delta_{h,H})f)_{\Omega^{0}}}\; \d\PP(g,K) \Bigg]\; \d\P^{h,H}(f),
\end{align*}
so that this measure appears as a perturbation of the distribution $\P^{h,H}$ of the Gaussian free vector field. If one is interested in a particular perturbation, then one should look for a probability measure $\PP$ which makes the expression between the brackets equal to this perturbation. This perturbative point of view was one of Symanzik's original motivations to prove the identity that bears his name.

Let us give another expression of the measure $\P^{\PP}$ based on a computation of the partition function $Z^{\PP}$. 

\begin{lemma} Let $\L$ be a Poissonnian ensemble of loops with intensity $\frac{r\beta}{2}\mupl$. For every connection~$g$ and every potential $K$, the following equality holds:
\[\Ex\Big[\prod_{\gamma\in\L} \tr(\hol_{g,K}(\gamma^{-1})) \Big]=\left(\frac{\det\Delta^{r}}{\det \Delta_{g,K}}\right)^{\frac{\beta}{2}}=(\pi^{-|\V|}\det\Delta)^{\frac{r\beta}{2}}Z^{g,K}.\]
\end{lemma}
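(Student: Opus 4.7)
The plan is to derive the identity by combining Proposition~\ref{prop:hol-delta} (in its traced form) with Campbell's formula for Poisson point processes.

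Since any vector bundle over a graph is trivializable, my first step would be to pick a ``flat'' reference connection $h_{0}$ on $\F$, whose holonomy around every loop is the identity. Then $\Delta_{h_{0}}$ is gauge-equivalent to the Laplacian of the canonical connection on $\K^{r}_{\G}$, which decomposes as $\Delta\otimes\Id_{\K^{r}}$, so $\det\Delta_{h_{0}}=(\det\Delta)^{r}$. I would then apply Proposition~\ref{prop:hol-delta} with $(h,H)=(h_{0},0)$ and $(h',H')=(g,K)$, and take the trace on $\End(\Omega^{0}(\V,\F))$ of both sides. Such a trace picks out only the loop contributions of the path integral on the left, yielding $\Tr_{\F_{\ul\gamma}}\hol(\gamma^{-1})$ for each loop; together with $\Tr\,\hol_{h_{0}}(\gamma^{-1})=r$ and $\Tr\log=\log\det$, this gives
\begin{equation}\label{eq:planlogdet}
\int_{\CL(\G)}\bigl(\Tr\,\hol_{g,K}(\gamma^{-1})-r\bigr)\,d\mupl(\gamma)=\log\frac{(\det\Delta)^{r}}{\det\Delta_{g,K}}.
\end{equation}

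Next, I would apply Campbell's formula for the Poisson point process $\L$ of intensity $\tfrac{r\beta}{2}\mupl$ to the multiplicative functional $f(\gamma)=\tr(\hol_{g,K}(\gamma^{-1}))=\tfrac{1}{r}\Tr(\hol_{g,K}(\gamma^{-1}))$. This yields
\[
\Ex\Big[\prod_{\gamma\in\L}\tr(\hol_{g,K}(\gamma^{-1}))\Big]=\exp\frac{r\beta}{2}\int_{\CL(\G)}\bigl(\tr\,\hol_{g,K}(\gamma^{-1})-1\bigr)\,d\mupl=\exp\frac{\beta}{2}\int_{\CL(\G)}\bigl(\Tr\,\hol_{g,K}(\gamma^{-1})-r\bigr)\,d\mupl,
\]
which by \eqref{eq:planlogdet} equals $((\det\Delta)^{r}/\det\Delta_{g,K})^{\beta/2}$. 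The second equality in the lemma is then a direct substitution using the definition \eqref{defZ}, which gives $Z^{g,K}=\pi^{\beta r|\V|/2}(\det\Delta_{g,K})^{-\beta/2}$.

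The main obstacle that I expect is the justification of Campbell's formula in spite of $\mupl=\mup+\mul$ being an infinite measure. The part $\mul$ is finite and the integrand is bounded there, so only the constant-loop part $\mup$ requires scrutiny. On a constant loop $(x,t)$ one has $\hol_{g,K}((x,t))=e^{-tK_{x}}$, so that $\tr(\hol_{g,K})-1=\tfrac{1}{r}(\Tr\,e^{-tK_{x}}-r)=-\tfrac{t}{r}\Tr K_{x}+O(t^{2})$ as $t\to0$. This cancels the $1/t$ singularity in the expression of $\mup$, and the $e^{-t}$ weight handles large $t$. Hence $f-1\in L^{1}(\mupl)$, Campbell's formula applies, and the infinite product converges absolutely.
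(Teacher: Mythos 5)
Your proposal is correct and follows essentially the same route as the paper: Campbell's formula for the Poisson ensemble combined with the traced form of Proposition~\ref{prop:hol-delta}, applied against a reference connection whose holonomy along every loop is the identity and whose Laplacian has determinant $(\det\Delta)^{r}$, followed by the definition of $Z^{g,K}$. Your verification that $\tr(\hol_{g,K}(\gamma^{-1}))-1$ is integrable against the infinite constant-loop part of $\mupl$ is a welcome justification of Campbell's formula that the paper leaves implicit.
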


\begin{proof} By Campbell formula, the leftmost quantity is equal to 
\[\exp \frac{\beta}{2}\int_{\CL(\G)} (\Tr(\hol_{g,K}(\gamma^{-1}))-r)\; \d\mupl(\gamma).\]
In order to apply \eqref{masterLJStr}, we need to understand the scalar $r$ as the trace of the holonomy of a connection along $\gamma$. Such a connection can be constructed by choosing a basis of each fibre of $\F$ and letting, for every vertex $x$ and every edge $e$ issued from $x$, the holonomy $h_{e,x}$ be the identity in the chosen bases of $\F_{x}$ and $\F_{e}$. For such a connection, the holonomy along every loop based at a vertex $x$ is the identity of $\F_{x}$. Moreover, the Laplacian associated to such a connection is conjugated to the Laplacian of the trivial bundle of rank $r$ endowed with the trivial connection. This accounts for the first equality. The second follows from \eqref{defZ}.
\end{proof}

It follows immediately from this result that for every connection $g$ and every potential $K$,
\begin{equation}\label{ZZ}
\frac{Z^{g,K}}{Z^{\PP}}=\frac{\Ex\Big[\prod_{\gamma\in\L} \tr(\hol_{g,K}(\gamma^{-1})) \Big]}{\EE\otimes \Ex\Big[\prod_{\gamma\in\L} \tr(\hol_{g',K'}(\gamma^{-1})) \Big]}.
\end{equation}

\subsection{Symanzik's identity}
We can now state the covariant version of Symanzik's identity. Recall the definition of the measure $\nu$ (Definition \ref{defnu}).

\begin{theorem}\label{thm:Symanzik} Let $\G$ be a weighted graph with a well. Let $\F$ be a vector bundle over $\G$. Let $\PP$ be a Borel probability measure on $\A(\F)\times \H(\F)$. 

Assume that $\K=\R$. Let $f_{1},\ldots,f_{2k}$ be $2k$ sections of $\F$. Then
\begin{align}\label{symanzikreel}
\Ex^{\PP}\left[\prod_{i=1}^{2k}(f_{i},\Phi)_{\Omega^{0}}\right]=&\\
\nonumber
&\hspace{-2cm} \frac{\displaystyle\EE\otimes \Ex\otimes \mbox{$\int_{\nu^{\otimes k}}$}\bigg[\prod_{\gamma\in\L} \tr(\hol_{g,K}(\gamma^{-1}))\sum_{\pi}\prod_{l=1}^{k}\lambda_{\ul{\gamma_{l}}} \langle f_{i_{l}}(\ul{\gamma_{l}}),\hol_{g,K}(\gamma^{-1})f_{j_{l}}(\ol{\gamma_{l}})\rangle_{\ul{\gamma_{l}}}\bigg]}{\displaystyle\EE\otimes \Ex\bigg[\prod_{\gamma\in\L} \tr(\hol_{g,K}(\gamma^{-1}))\bigg]},
\end{align}
where the sum is taken over all partitions $\pi=\{\{i_{1},j_{1}\},\ldots,\{i_{k},j_{k}\}\}$ of $\{1,\ldots,2k\}$ by pairs.

Assume that $\K=\C$. Let $f_{1},\ldots,f_{k},f'_{1},\ldots,f'_{k}$ be $2k$ sections of $\F$. Then
\begin{align}\label{symanzikcomplexe}
\Ex^{\PP}\left[\prod_{i=1}^{k}(f_{i},\Phi)_{\Omega^{0}}\prod_{i=1}^{k}(\Phi,f'_{i})_{\Omega^{0}}\right]=&\\
\nonumber
&\hspace{-4cm} \frac{\displaystyle\EE\otimes \Ex\otimes \mbox{$\int_{\nu^{\otimes k}}$}\Bigg[\prod_{\gamma\in\L} \tr(\hol_{g,K}(\gamma^{-1}))\sum_{\sigma}\prod_{l=1}^{k}\lambda_{\ul{\gamma_{l}}} \langle f_{l}(\ul{\gamma_{l}}),\hol_{g,K}(\gamma^{-1})f'_{\sigma(l)}(\ol{\gamma_{l}})\rangle_{\ul{\gamma_{l}}}\Bigg]}{\displaystyle\EE\otimes \Ex\bigg[\prod_{\gamma\in\L} \tr(\hol_{g,K}(\gamma^{-1}))\bigg]},
\end{align}
where the sum is over all permutations of $\{1,\ldots,k\}$.
\end{theorem}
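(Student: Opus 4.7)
The plan is to combine three ingredients: the mixture decomposition \eqref{ZP} of $\P^{\PP}$ as a weighted average of the Gaussian measures $\P^{g,K}$; the Wick formula (Proposition \ref{wick}) applied to the inner Gaussian expectation; and the Poissonian identity \eqref{ZZ} that rewrites the ratio $Z^{g,K}/Z^{\PP}$ as a ratio of loop-soup expectations. On the right-hand side of the identity to prove, both labels $(g,K)$ and $(h,H)$ designate the same random pair drawn according to $\PP$ under the outer expectation $\EE$.

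First, from \eqref{ZP} we have
\[\Ex^{\PP}\Big[\prod_{i=1}^{2k}(f_i,\Phi)_{\Omega^0}\Big]=\frac{1}{Z^{\PP}}\int_{\A(\F)\times\H(\F)} Z^{g,K}\, \Ex^{g,K}\Big[\prod_{i=1}^{2k}(f_i,\Phi)_{\Omega^0}\Big]\, d\PP(g,K).\]
I would then apply the real Wick formula \eqref{Wickreel} to the inner Gaussian expectation, with connection $g$ and potential $K$ in place of $(h,H)$. This turns the inner moment into a sum over pair-partitions $\pi=\{\{i_1,j_1\},\ldots,\{i_k,j_k\}\}$ of a $k$-fold integral against $\nu^{\otimes k}$ of a product of pairwise inner products involving $\hol_{g,K}$.

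Next, I would substitute the identity \eqref{ZZ} for the ratio $Z^{g,K}/Z^{\PP}$, which pulls a factor $\Ex[\prod_{\gamma\in\L}\tr(\hol_{g,K}(\gamma^{-1}))]$ into the numerator and replaces $1/Z^{\PP}$ by $1/\EE\otimes\Ex[\prod_{\gamma\in\L}\tr(\hol_{g',K'}(\gamma^{-1}))]$. Applying Fubini to exchange the $\PP$-integration, the loop-soup expectation, and the $k$-fold $\nu$-integration then yields exactly the right-hand side of \eqref{symanzikreel}. The complex case is identical in structure, with \eqref{Wickreel} replaced by its complex counterpart \eqref{Wickcomplexe}, the sum then running over bijections of $\{1,\ldots,k\}$ rather than pair-partitions of $\{1,\ldots,2k\}$.

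There is no genuine analytic obstacle once \eqref{ZZ} and Proposition \ref{wick} are in hand: the remaining work is bookkeeping on the product measure $\PP\otimes(\text{law of }\L)\otimes\nu^{\otimes k}$. The one conceptual point worth emphasising is why the formulation uses a loop-soup factor rather than a naive ratio of determinants: it is precisely because \eqref{ZZ} absorbs the determinantal factor $\det\Delta_{g,K}^{-\beta/2}$ carried by $Z^{g,K}$ into a Poissonian average, via the lemma that precedes it. This absorption is what makes the right-hand side manifestly interpretable as an annealed holonomy statistic, and is the feature that distinguishes the identity from a plain conditional Wick formula.
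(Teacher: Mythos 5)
Your proposal is correct and follows essentially the same route as the paper: write $\P^{\PP}$ as the mixture \eqref{ZP}, apply Proposition \ref{wick} under the inner Gaussian expectation with the random pair $(g,K)$, and then replace $Z^{g,K}/Z^{\PP}$ by the loop-soup ratio \eqref{ZZ}. Your remark that the $(h,H)$ appearing inside the statement's right-hand side designates the same random pair as $(g,K)$ also matches the paper's (implicit) reading.
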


An illustration of Theorem~\ref{thm:Symanzik} is given in Figure~\ref{fig:symanzik}.

\begin{figure}[ht!]
\includegraphics[width=.6\textwidth]{fig-symanzik}
\caption{\small According to Symanzik's identity, illustrated here in the complex case, the correlation of the random section of $\F$ at the points $x_1,x_2,x_3$ and its conjugate at the points $y_{1},y_{2},y_{3}$ can be computed in terms of the traces of the holonomies along loops of a Poissonnian ensemble (here in grey) and the holonomies along paths connecting the $x$ points to the $y$ points.
}\label{fig:symanzik}
\end{figure}

\begin{proof} The only difference between the real and complex cases is the form of Wick's theorem, which we stated in the present context as Proposition \ref{wick}. Let us treat the complex case. Writing the definition of $\P^{\PP}$ and using precisely Proposition \ref{wick}, we find that the left-hand side of \eqref{symanzikcomplexe} is equal to 
\[\frac{1}{Z^{\PP}}\sum_{\sigma} \int_{\A(\F)\times \H(\F)} Z^{g,K} \int_{\CP(\G)^{k}} \prod_{l=1}^{k}\lambda_{\ul{\gamma_{l}}} \langle f_{l}(\ul{\gamma_{l}}),\hol_{g,K}(\gamma^{-1})f'_{\sigma(l)}(\ol{\gamma_{l}}) \rangle_{\ul{\gamma_{l}}} \; \d\nu(\gamma_{1})\ldots \d\nu(\gamma_{k}) \; \d\PP(g,K).
\]
It suffices to replace $\frac{Z^{g,K}}{Z^{\PP}}$ by its value given by \eqref{ZZ} to find the right-hand side of \eqref{symanzikcomplexe}.
\end{proof}

\section*{Concluding remarks}

We view our paper as setting a framework for further study in random spatial processes on vector bundles over graphs. Some further aspects are studied in \cite{KL3,KL4}. As a conclusion, we mention without much detail a number of possible directions of future research.

We expect that our results should extend to the framework of Euclidean or Hermitian vector bundles over manifolds, at least for choices of smooth enough connections. In that case, random walk paths are replaced by Brownian paths on the manifold, and the continuum Gaussian free vector field is a random section of the vector bundle; moments of the twisted holonomies as well as moments of the Gaussian free vector field need to be renormalised by replacing them with appropriate Wick powers. In order to prove these statements, the existing covariant Feynman--Kac formulas \cite{Albeverio, Norris, Guneysu-continuum} will certainly be key.

Our setup should also allow to study the random interlacement model~\cite{Sznitman}, which can be investigated on an infinite lattice by using an exhausting sequence of finite graphs. We expect the analogue of Theorem~\ref{thm:LeJanSznitman} to be true in that case too.

Loop percolation is the study of connectivity properties of the loop soup. On hypercubic lattices, its phase transition with respect to an intensity parameter is now well understood. In our setup, there are many variants of loop percolation which could be defined and studied. In particular, since our parameter space is the space of connections, one can expect more complicated phase diagrams. 

Our definition of twisted holonomies provides a covariant analogue of local times. Cover times also have strong relations to Gaussian free fields. Recently, Ding and Li~\cite{Ding} and Zhai~\cite{Zhai} used the refinement of Le Jan's isomorphism theorem by Lupu~\cite{Lupu} (a refinement of which is given by Sznitman~\cite{SznitmanRefinedIsomorphism}) to prove results on cover times. Can one extend these results to our setup? What is the covariant analogue of the cover time and its relation to the Gaussian free vector field?

\def\@rst #1 #2other{#1}
\renewcommand\MR[1]{\relax\ifhmode\unskip\spacefactor3000 \space\fi
  \MRhref{\expandafter\@rst #1 other}{#1}}
\renewcommand{\MRhref}[2]{\href{http://www.ams.org/mathscinet-getitem?mr=#1}{MR#1}}

\bibliographystyle{hmralphaabbrv}
\bibliography{biblio-Symanzik}

\end{document}